\newtheorem{theorem}{Theorem}[section]
\newtheorem{lemma}[theorem]{Lemma}
\def\proofbox{\begin{picture}(6.5,6.5)
\put(0,0){\framebox(6.5,6.5){}}\end{picture}}
\newenvironment{proof}{\noindent{\it Proof.\quad}}{\hfill\proofbox}
\begin{document}

\title{Exhausting Curve Complexes by Finite Rigid Sets on Nonorientable Surfaces\\}
\author{Elmas Irmak}

\maketitle

\renewcommand{\sectionmark}[1]{\markright{\thesection. #1}}

\thispagestyle{empty}
\maketitle
\begin{abstract} Let $N$ be a compact, connected, nonorientable surface of genus $g$ with $n$ boundary components. Let $\mathcal{C}(N)$ be the curve complex of $N$. 
We prove that if $(g,n) = (3,0)$ or $g + n \geq 5$, then there is an exhaustion of $\mathcal{C}(N)$ by a sequence of finite rigid sets. This improves the author's result on exhaustion of $\mathcal{C}(N)$ by a sequence of finite superrigid sets.\end{abstract}
  
{\small Key words: Curve complexes, nonorientable surfaces, rigidity

MSC: 57N05, 20F65}
 
\section{Introduction} Let $N$ be a compact, connected, nonorientable surface of genus $g$ with $n$ boundary components. Let
$Mod_N$ denote the mapping class group, the group of isotopy classes of all self-homeomorphisms, of $N$. Let $\mathcal{C}(N)$ denote the complex of curves of $N$. This is an abstract simplicial complex. Its vertex set 
is the set of isotopy classes of nontrivial simple closed curves on $N$ where nontivial means it does not bound a disk, it does not bound a  M\"{o}bius band and it is not isotopic to a boundary component of $N$. A set of $n$ vertices forms an $n-1$ dimensional simplex if
its elements can be represented by pairwise disjoint simple closed curves on the surface. There is a natural action of $Mod_N$ on $\mathcal{C}(N)$ by automorphisms. If $[h] \in Mod_N$, then $[h]$ induces an automorphism 
$[h]_*: \mathcal{C}(N) \rightarrow \mathcal{C}(N)$ where 
$[h]_*([a]) = [h(a)]$ for every vertex $[a]$ of $\mathcal{C}(N)$.
Let $\mathcal{B}$ be a subcomplex of $\mathcal{C}(N)$.
A simplicial map $\lambda :\mathcal{B} \rightarrow \mathcal{C}(N)$ is called locally injective if it is injective on the star of every vertex in $\mathcal{B}$. We will say that 
$\mathcal{B}$ is rigid if every locally injective simplicial map $\lambda : \mathcal{B} \rightarrow \mathcal{C}(N)$ is induced by a homeomorphism, i.e. if $\lambda :\mathcal{B} \rightarrow \mathcal{C}(N)$ is a locally injective map then there exists a homeomorphism $h : N \rightarrow N$ such that $[h]_*(\alpha) = \lambda(\alpha)$ for every vertex $\alpha$ in $\mathcal{B}$. 

Aramayona-Leininger proved that there is an exhaustion of the complex of curves by a sequence of finite rigid sets on compact, connected, orientable surfaces in \cite{AL2}. We will prove that result for nonorientable surfaces in this paper. We will use the same notation for any set and the subcomplex of $\mathcal{C}(N)$ that is spanned by that set. The main result is the following:
 
\begin{theorem} \label{B} If $(g,n) = (3,0)$ or
$g + n \geq 5$, then there exists a sequence $\mathcal{E}_1 \subset \mathcal{E}_2 \subset \dots \subset \mathcal{E}_n \subset \dots$  such that 

(i) $\mathcal{E}_i$ is a finite rigid set in $\mathcal{C}(N)$ for all $i \in \mathbb{N}$,

(ii) when $g + n \geq 5$, $\mathcal{E}_i$ has trivial pointwise stabilizer in $Mod_N$ for each $i \in \mathbb{N}$, and 

(iii) $\bigcup_{i \in \mathbb{N}} \mathcal{E}_i = \mathcal{C}(N)$. \end{theorem}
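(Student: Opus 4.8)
The plan is to transpose the argument of Aramayona--Leininger \cite{AL2} to the nonorientable setting: start from a single finite rigid set, and build the exhausting sequence as a growing union of its $Mod_N$-translates, fused together by a rigidity-from-rich-overlap principle.

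\emph{The seed set.} I would begin with the finite rigid set $\mathcal{X}\subset\mathcal{C}(N)$ produced in the author's earlier work on finite rigid sets in curve complexes of nonorientable surfaces, together with its established properties: it is rigid; when $g+n\geq 5$ its pointwise stabilizer in $Mod_N$ is trivial; it contains a representative of every topological type of simple closed curve on $N$; and (possibly after a harmless enlargement justified by the gluing lemma below) it contains curves $\gamma_1,\dots,\gamma_k$ and a once-holed Klein bottle $K$ with $\partial K\in\mathcal{X}$, such that the Dehn twists $T_{\gamma_1},\dots,T_{\gamma_k}$ together with the crosscap slide $Y$ supported in $K$ generate $Mod_N$ (a finite generating set of Dehn twists and a single crosscap slide is classical). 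Write $Z$ for the normal subgroup of $Mod_N$ of mapping classes fixing every vertex of $\mathcal{C}(N)$; it is trivial when $g+n\geq5$ and possibly nontrivial when $(g,n)=(3,0)$. Let $S=\{T_{\gamma_i}^{\pm1}:1\le i\le k\}\cup\{Y^{\pm1}\}\cup\{1\}$, a finite symmetric generating set of $Mod_N$.

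\emph{The gluing lemma.} I would prove: if $\mathcal{A},\mathcal{B}\subseteq\mathcal{C}(N)$ are rigid and the pointwise stabilizer of $\mathcal{A}\cap\mathcal{B}$ in $Mod_N$ equals $Z$, then $\mathcal{A}\cup\mathcal{B}$ is rigid with pointwise stabilizer $Z$. The proof is short: a locally injective simplicial map $\lambda:\mathcal{A}\cup\mathcal{B}\to\mathcal{C}(N)$ restricts to a locally injective map on each of $\mathcal{A}$ and $\mathcal{B}$, because the star of a vertex in a subcomplex lies in its star in the larger complex; rigidity then yields homeomorphisms $h_A,h_B$ of $N$ with $[h_A]_*=\lambda$ on $\mathcal{A}$ and $[h_B]_*=\lambda$ on $\mathcal{B}$; so $[h_A^{-1}h_B]$ fixes $\mathcal{A}\cap\mathcal{B}$ pointwise, hence lies in $Z$, hence fixes every vertex of $\mathcal{C}(N)$, whence $[h_A]_*=\lambda$ on all of $\mathcal{A}\cup\mathcal{B}$; and the pointwise stabilizer of $\mathcal{A}\cup\mathcal{B}$ lies between $Z$ and the pointwise stabilizer of $\mathcal{A}\cap\mathcal{B}$, so it equals $Z$.

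\emph{The sequence, and the main obstacle.} Let $\mathcal{E}_i$ be the union, over all $\phi\in Mod_N$ that are products of at most $i$ elements of $S$, of the vertex sets $\phi(\mathcal{X})$. Then each $\mathcal{E}_i$ is finite, $\mathcal{E}_i\subseteq\mathcal{E}_{i+1}$ (as $1\in S$), and $\bigcup_i\mathcal{E}_i=\mathcal{C}(N)$, since any curve equals $\phi(\beta)$ for some $\beta\in\mathcal{X}$ of the same topological type and some $\phi\in Mod_N$ expressible by finitely many elements of $S$; this is (iii). For (i) and (ii) I would prove by induction on $i$ that $\mathcal{E}_i$ is rigid with pointwise stabilizer $Z$. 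Each translate $\phi(\mathcal{X})$ occurring in $\mathcal{E}_i$ is rigid with pointwise stabilizer $Z$ (both properties are $Mod_N$-invariant, as $Z$ is normal), and these translates are indexed by the connected ball of radius $i$ in the Cayley graph of $(Mod_N,S)$; enumerating them in order of word length and adjoining them one at a time, each newly adjoined translate has the form $\phi(s(\mathcal{X}))$ with $s\in S$ and $\phi(\mathcal{X})$ already assembled, and its overlap with the part assembled so far contains $\phi(\mathcal{X}\cap s(\mathcal{X}))$, so its pointwise stabilizer lies between $Z$ and the pointwise stabilizer of $\mathcal{X}\cap s(\mathcal{X})$; by the gluing lemma it therefore suffices to show that $\mathcal{X}\cap s(\mathcal{X})$ has pointwise stabilizer $Z$ for every $s\in S$. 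This last statement is the heart of the proof and the step I expect to be hardest: for $s=T_{\gamma_i}$ (respectively $s=Y$) the set $\mathcal{X}\cap s(\mathcal{X})$ contains every curve of $\mathcal{X}$ disjoint from the small support of $s$, and one must check, from the explicit combinatorics of $\mathcal{X}$ and the existing classification of finite subcomplexes of $\mathcal{C}(N)$ with pointwise stabilizer $Z$, that this subcollection already forces a homeomorphism up to $Z$. This is what dictates choosing the seed set large and asymmetric; the crosscap slide $s=Y$, having no orientable counterpart, must be handled separately, its support being a single once-holed Klein bottle being what keeps the check feasible; and the case $(g,n)=(3,0)$ is genuinely more delicate because there $Z\neq1$, so one must keep every overlap $\mathcal{X}\cap s(\mathcal{X})$ large enough that its pointwise stabilizer does not exceed $Z$, if necessary by restricting to generators whose support is small relative to the small rigid set available on $N_3$.
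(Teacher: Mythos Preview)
Your architecture is exactly the paper's: a seed rigid set $\mathcal{B}$, a finite generating set $G$ for $Mod_N$, the observation that for each $f\in G$ the overlap $\mathcal{B}\cap f(\mathcal{B})$ contains a subset $f(L_f)$ whose pointwise stabilizer is $Z$, and then the inductive union $\mathcal{E}_k=\mathcal{E}_{k-1}\cup\bigcup_{f\in G}(f(\mathcal{E}_{k-1})\cup f^{-1}(\mathcal{E}_{k-1}))$ glued together by precisely your gluing lemma. The paper's Lemma~\ref{L_f_3} and Lemma~\ref{L_f} are the overlap checks you single out as the heart of the matter, and its Theorems~\ref{B-3} and 3.12 are the induction you describe.

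The one place your outline is too optimistic is the seed set. You assume $\mathcal{X}$ is available from prior work, ``possibly after a harmless enlargement justified by the gluing lemma.'' Neither half is right. The Ilbira--Korkmaz set is rigid but does not carry every topological type nor the overlap property you need; the sets in \cite{Ir10} are only \emph{superrigid}, and since locally injective maps need not be superinjective, superrigidity does not imply rigidity. And the enlargement cannot be done with your gluing lemma: that lemma fuses two sets each already known to be rigid, whereas here you must adjoin individual curves to a single rigid set and prove the result is still rigid. The paper's actual work (Lemmas~\ref{1a}--\ref{B_5} and Lemma~\ref{curves}) is exactly this enlargement, carried out by a different mechanism: for each new curve one shows it is the \emph{unique} isotopy class disjoint from a specified collection of curves already controlled, sometimes after first forcing nonzero or intersection-two information via top-dimensional pants decompositions and pentagon configurations. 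That curve-by-curve uniqueness argument, not a rigid-set gluing, is what makes the locally injective case go through, and it is where the new curve configurations beyond \cite{Ir10} enter.
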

  
Ivanov proved that automorphisms of complex of curves are induced by homeomorphisms on compact, connected, orientable surfaces when genus is at least two, and using this result he classified injective homomorphisms between finite index subgroups of mapping class groups in \cite{Iv1}. After that many results were proven about simplicial maps of complex of curves on orientable and nonorientable surfaces. Superinjective simplicial maps of complex of curves were first studied by the author in \cite{Ir1} to classify injective homomorphisms from finite index subgroups of mapping class groups to the whole group for orientable surfaces. We remind that a simplicial map is called superinjective if it preserves geometric intersection zero and nonzero properties of vertices. The author also proved that superinjective simplicial maps of the complexes of curves on nonorientable surfaces are induced by homeomorphisms in \cite{Ir4}. Superinjective simplicial maps of two-sided curve complexes were classfied by Irmak-Paris on nonorientable surfaces in \cite{IrP1}. Using this result Irmak-Paris gave a classification of injective homomorphisms from finite index subgroups to the whole mapping class group on these surfaces in \cite{IrP2}. 
Shackleton proved that locally injective simplicial maps of the curve complex are induced by homeomorphisms in \cite{Sh} for orientable surfaces. 
Aramayona-Leininger proved that there exists a finite rigid subcomplex in the curve complex of a compact, connected, orientable surface $R$ in \cite{AL1}, and using this result they proved that  there is an exhaustion of the curve complex by a sequence of finite rigid sets on such orientable surfaces in \cite{AL2}, i.e. there exists a sequence $\mathcal{X}_1 \subset \mathcal{X}_2 \subset \dots \subset \mathcal{X}_n \subset \dots$  such that $\mathcal{X}_i$ is a finite rigid set in $\mathcal{C}(R)$ for each $i$, and $\bigcup_{i \in \mathbb{N}} \mathcal{X}_i = \mathcal{C}(R)$. For nonorientable surfaces, Ilbira-Korkmaz proved the existence of finite rigid subcomplexes in  $\mathcal{C}(N)$ when $g+n \neq 4$ in \cite{IlK}. The author proved that if $(g, n) \neq (1,2)$ and $g + n \neq 4$, then there is an exhaustion of $\mathcal{C}(N)$ by a sequence of finite superrigid sets in 
\cite{Ir10} (a subcomplex $X$ of $\mathcal{C}(N)$ is called superrigid if every superinjective simplicial map $\lambda : X \rightarrow \mathcal{C}(N)$ is induced by a homeomorphism.) 

In this paper 
the author improves her result given in 
\cite{Ir10} and proves that there is an exhaustion of $\mathcal{C}(N)$ by a sequence of finite rigid sets except for a few cases. 

{\bf Remark:} Proving the exhaustion of $\mathcal{C}(N)$ by finite rigid sets is a harder problem than proving the exhaustion of $\mathcal{C}(N)$ by supperrigid sets since controlling the images of finite sets under locally injective simplicial maps is harder than controlling the images of finite sets under superinjective simplicial maps. In this paper we had to come up with different curve configurations 
and hence a different sequence compared to the work given by the author in \cite{Ir10}. Differences also show up for small genus cases (which effects the construction of the sequence for higher genus cases) that we discuss in section one. For example, we show that for $(g,n)=(2,1)$ there is no exhaustion of $\mathcal{C}(N)$ by finite rigid sets, even though there is an exhaustion of $\mathcal{C}(N)$ by finite superrigid sets which was proved in \cite{Ir10}.
 
\section{Exhaustion of $\mathcal{C}(N)$ by finite rigid sets for some small genus cases}
\begin{figure}
	\begin{center}
		\hspace{0.005cm} \epsfxsize=1.001in \epsfbox{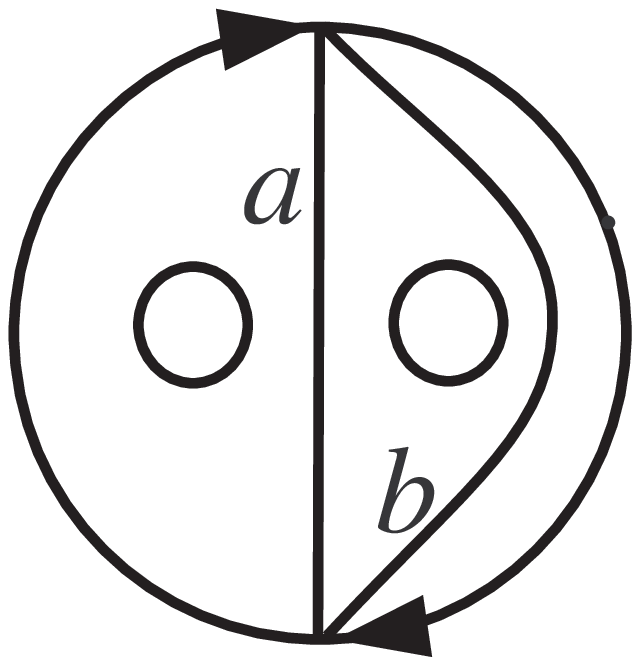} \hspace{1.1cm} \epsfxsize=1.93in \epsfbox{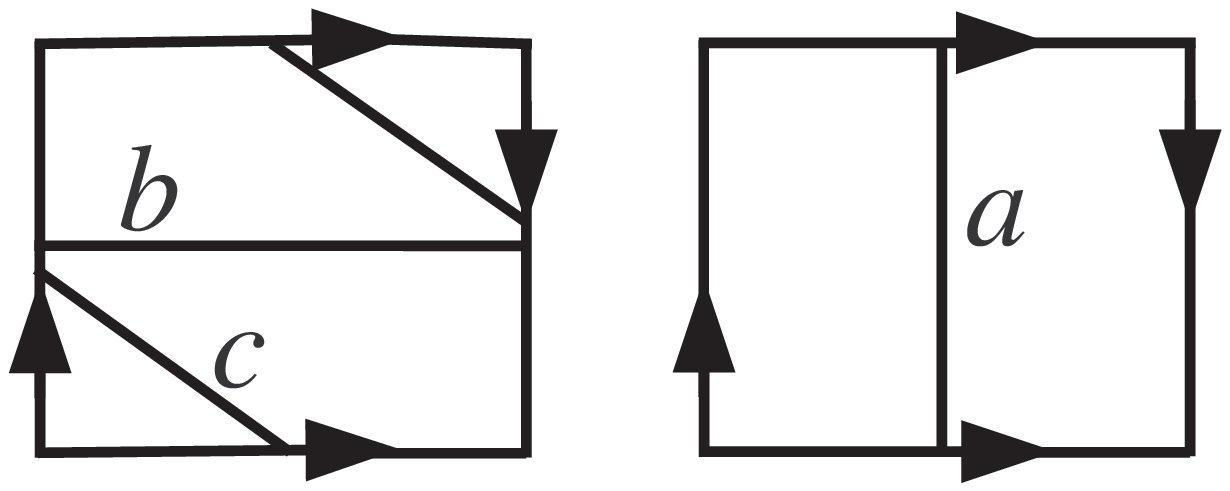}
		
		\hspace{0.1cm} (i) \hspace{4.3cm} (ii) \hspace{1.11cm}
		
		\caption{(i) $(g, n)=(1,2)$;\ \ \ (ii) $(g, n)=(2,0)$}
		\label{fig-zeros-1}
	\end{center}
\end{figure}   

We will show that the statement that there exists an exhaustion of the complex of curves by a sequence of finite rigid sets is true for 
$(g,n)=(1,0), (g,n)=(1,1)$ and $(g,n)=(3,0)$, and not true for $(g,n)=(1,2), (g,n)=(2,0)$ and $(g,n)=(2,1)$. 

Let $a$ be a simple closed curve on $N$. The curve $a$ is called  1-sided if a regular neighborhood of $a$ is homeomorphic to a M\"obius band, and it is called 2-sided if a regular neighborhood of $a$ is homeomorphic to an annulus. We recall that if $v$ is a vertex, then its star, $St(v)$, is the subcomplex of $\mathcal{C}(N)$ whose simplices are the simplices of $\mathcal{C}(N)$ that contain $v$ and the faces of such simplices. 

If $(g,n) = (1,0)$, then $N$ is the projective plane. If $(g,n) = (1,1)$, then 
$N$ is Mobius band. If $(g,n) = (1,0)$ or $(g,n) = (1,1)$, then there is only one vertex in the curve complex. It is represented by a 1-sided curve and we see that $\mathcal{C}(N)$ is a rigid set. If $(g,n) = (1,2)$, then the vertex set is $\{[a], [b]\}$ (see \cite{Sch}) where 
the 1-sided curves $a$ and $b$ are as shown in Figure \ref{fig-zeros-1} (i). A simplicial map sending both $[a]$ and $[b]$ to $[a]$ is locally injective but not induced by a homeomorphism. 

If $(g,n) = (2,0)$, then the vertex set for the complex of curves is $\{[a], [b], [c]\}$ where the curves $a$, $b$ and $c$ are as shown in Figure \ref{fig-zeros-1} (ii) (see \cite{Sch}). A simplicial map sending both $[a]$ and $[b]$ to $[b]$, and $[c]$ to $[c]$ is locally injective but not induced by a homeomorphism.  

\begin{figure}
	\begin{center}
		\hspace{0.005cm}	  \epsfxsize=1.18in \epsfbox{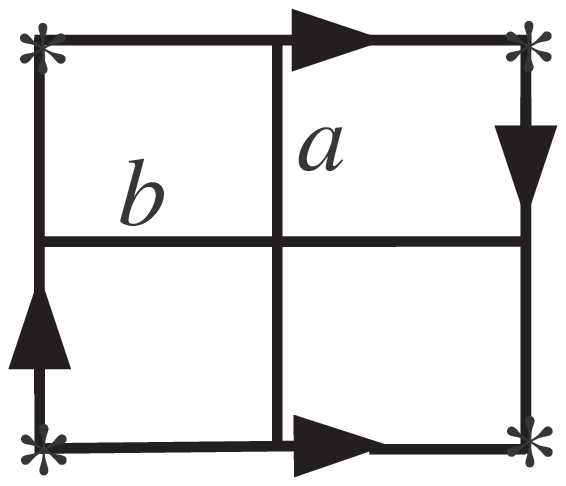} \hspace{0.2cm}
		\epsfxsize=3.72in \epsfbox{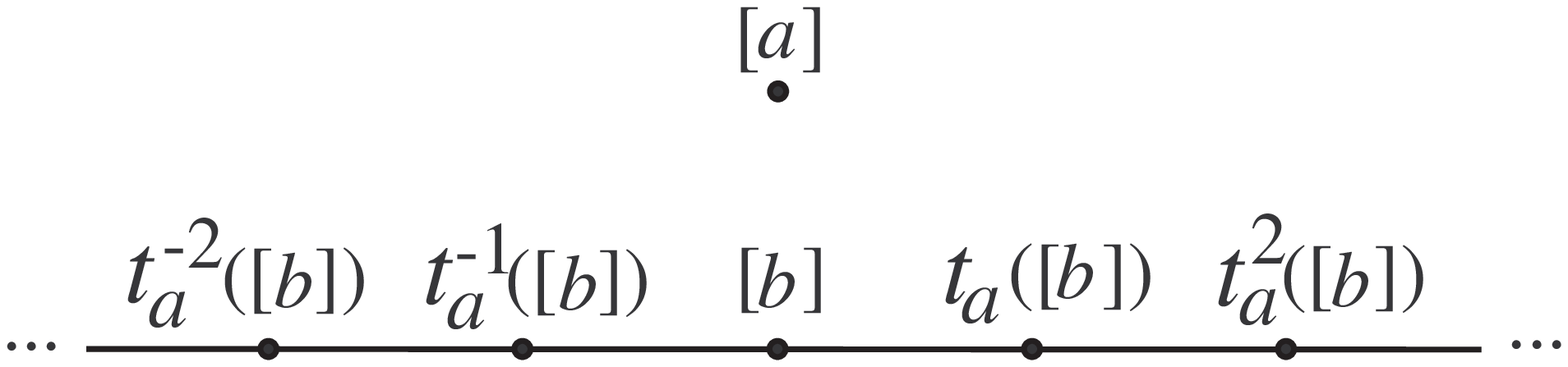}  
		
		\hspace{-3.1cm}	 (i) \hspace{6cm} (ii) 
		\caption{ $(g, n)=(2,1)$}
		\label{fig-zeros-2}
	\end{center}
\end{figure}

When $(g, n) = (2, 1)$ the complex of curves is given by Scharlemann (see \cite{Sch}). The vertex set is $\{[a], [b], t_a^m ([b]) : m \in \mathbb{Z} \}$ where the 2-sided curve $a$ and 1-sided curve $b$ are as shown in
Figure \ref{fig-zeros-2} (i) (the figure is drawn with a puncture instead of a boundary component) and $t_x$ is the dehn twist about $x$. The complex is given in Figure \ref{fig-zeros-2} (ii). If $F$ is a finite set of vertices containing $[a]$, we can see that the simplicial map fixing $[x]$ for every $[x] \in F \setminus \{[a]\}$ and sending $[a]$ to $t_a^m ([b])$ 
for some $m \in \mathbb{Z}$ such that $t_a^m ([b])$ is in the complement of $F$ is locally injective and injective but it is not induced by a homeomorphism. So, there exists no finite rigid set containing $[a]$.\medskip  

If $f: N \rightarrow N$ is a homeomorphism, then we will use the same notation for $f$ and $[f]_*$. Now we will prove Theorem 1.1 for $(g,n)=(3,0)$. In this section from now on we assume that $(g,n)=(3,0)$. Let $\mathcal{B} = \{a_1, a_2, a_3, c_1, c_2, d, e, f, j, l,$ $ u, v, w\}$ where the curves are as shown in Figure \ref{fig-new}. We use cross signs in the figures to mean that we remove the interiors of the disks which have cross signs inside
and identify the antipodal points on the resulting boundary components. Note that the curve $l$ is 1-sided and $l$ has orientable complement on $N$. The set $\mathcal{B}$ has nontrivial curves of every topological type on $N$. 

\begin{figure}
\begin{center}
\epsfxsize=1.71in \epsfbox{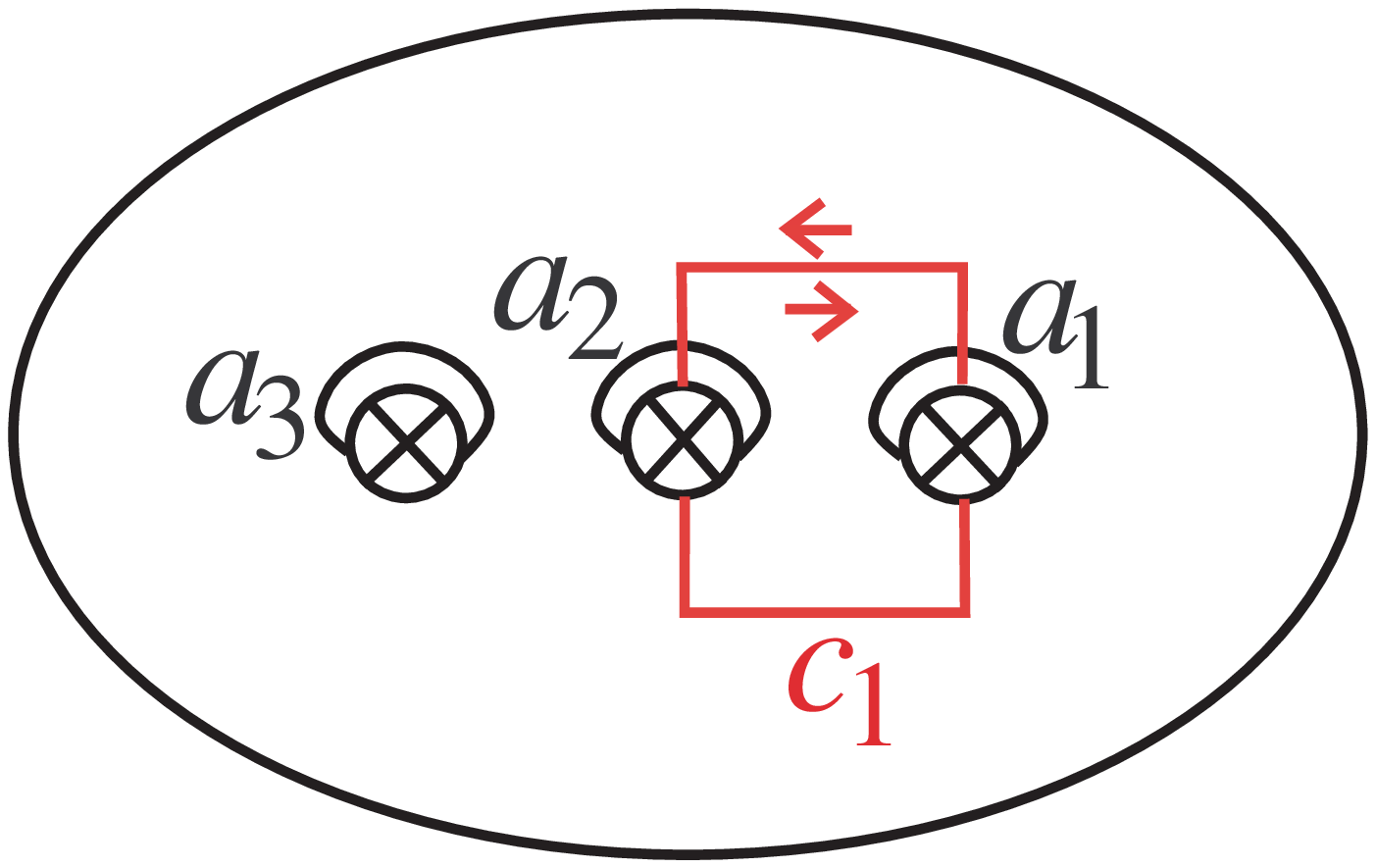}   \hspace{0.19cm}   	\epsfxsize=1.71in \epsfbox{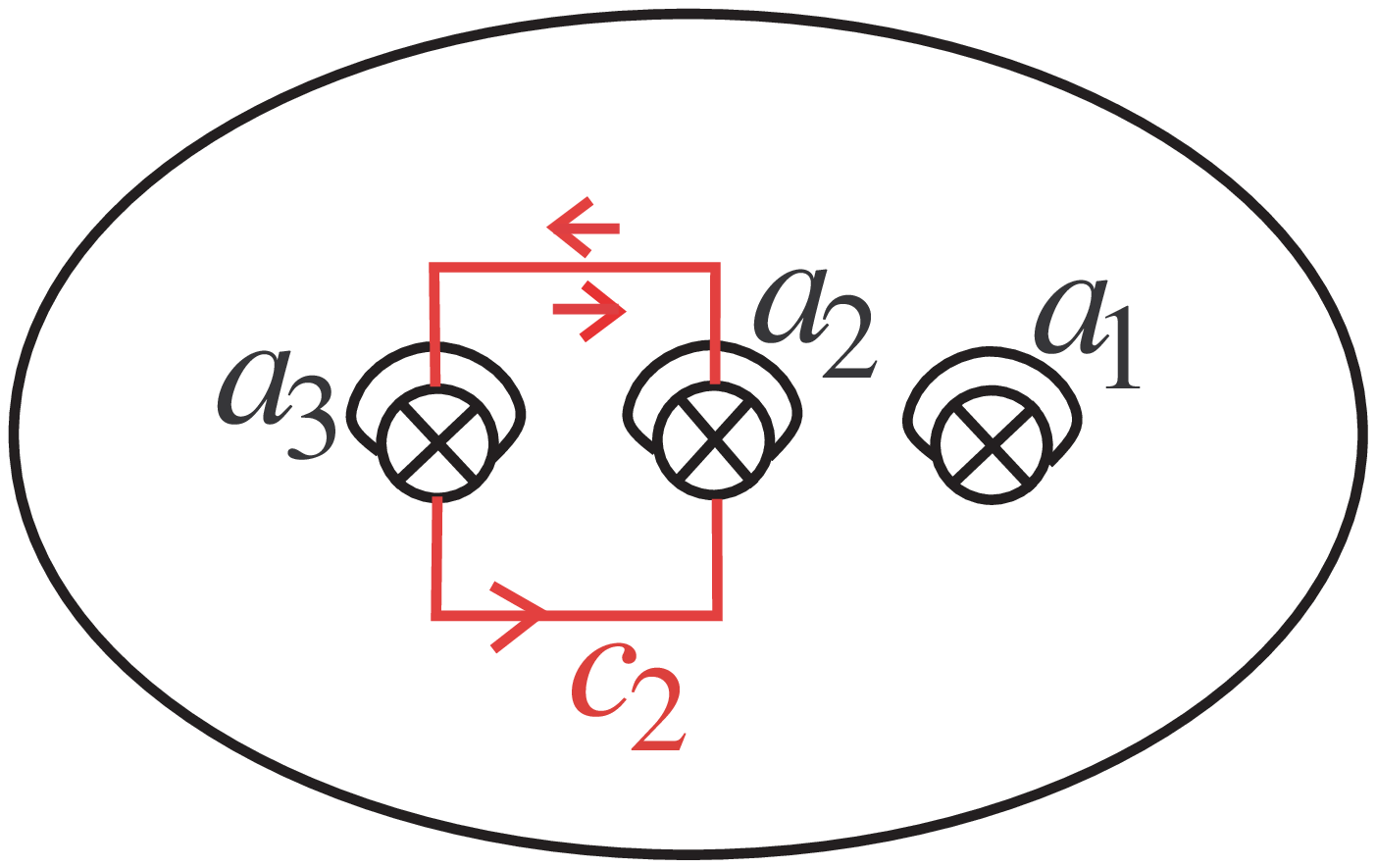}   \hspace{0.19cm}  	\epsfxsize=1.71in \epsfbox{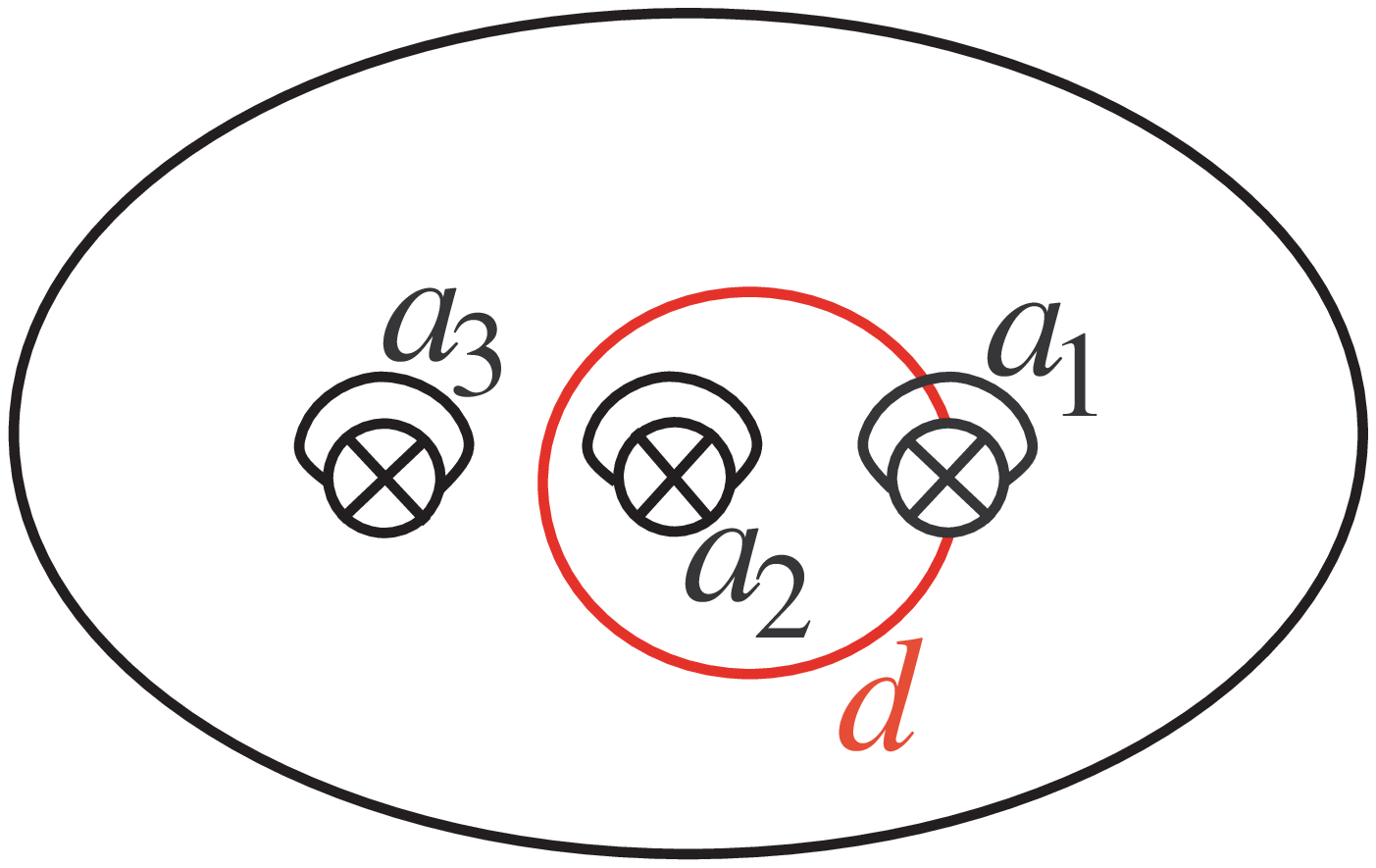} \vspace{0.2cm}
		 
\epsfxsize=1.71in \epsfbox{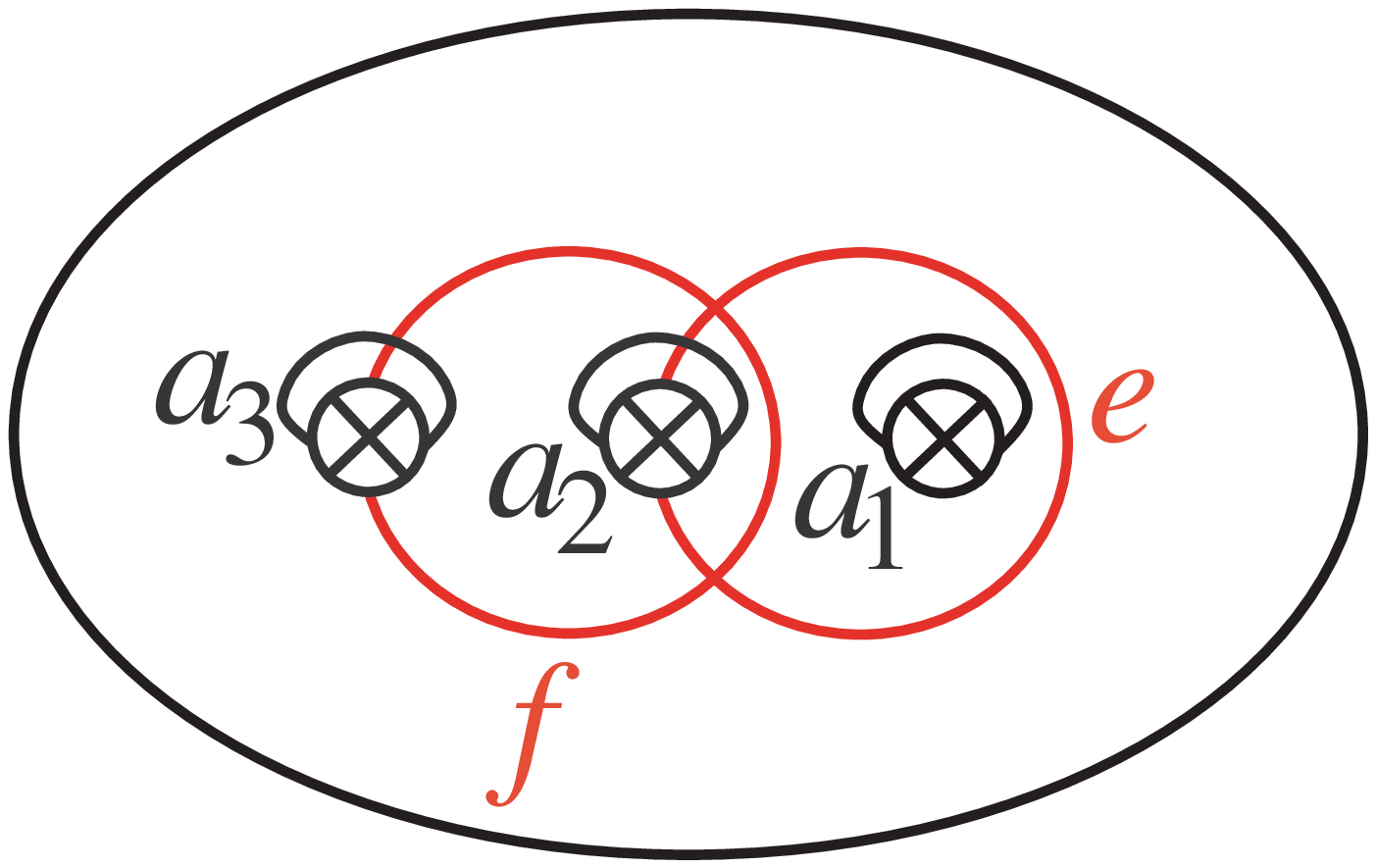} \hspace{0.19cm}   			\epsfxsize=1.71in \epsfbox{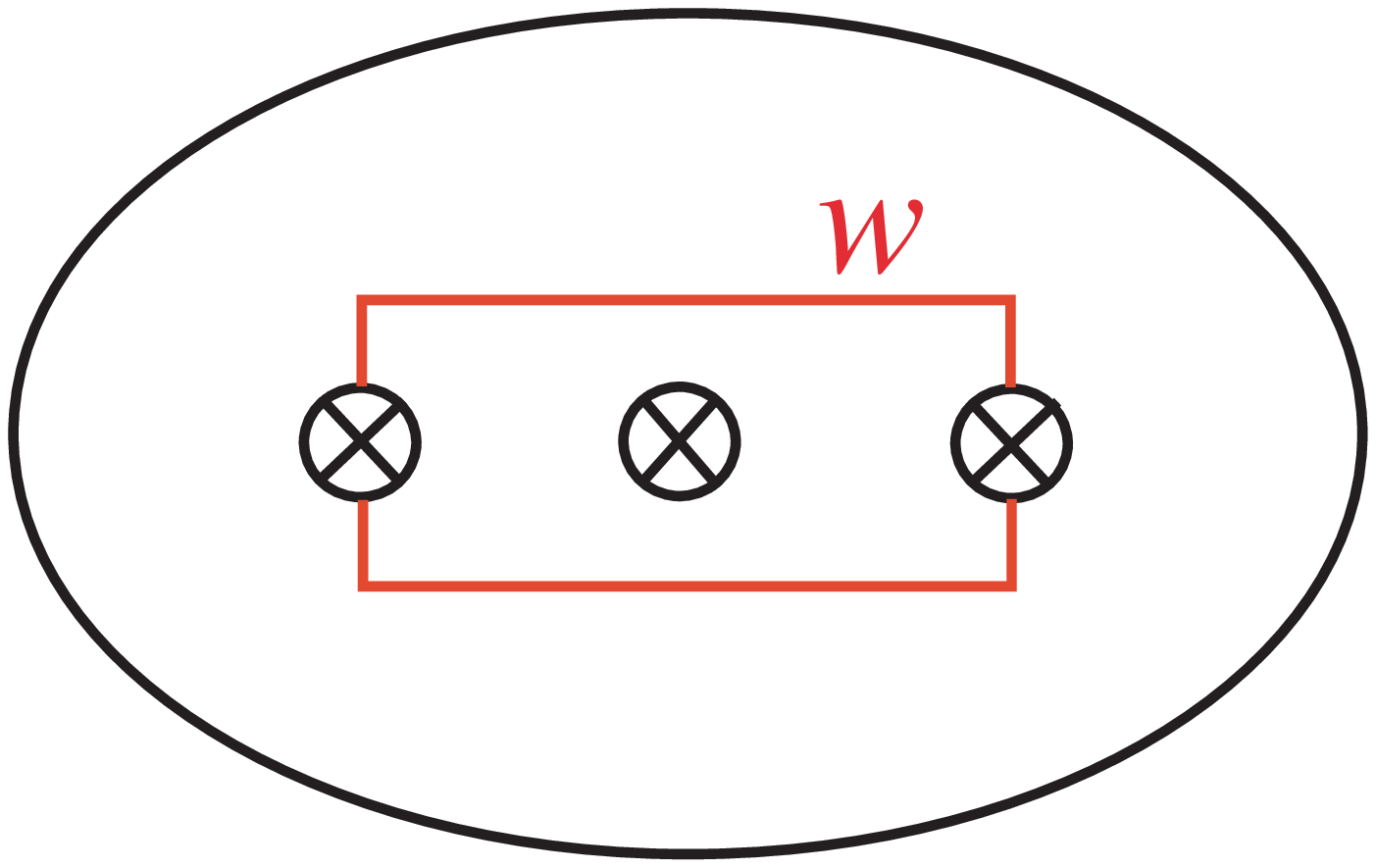}   \hspace{0.19cm}   	\epsfxsize=1.71in \epsfbox{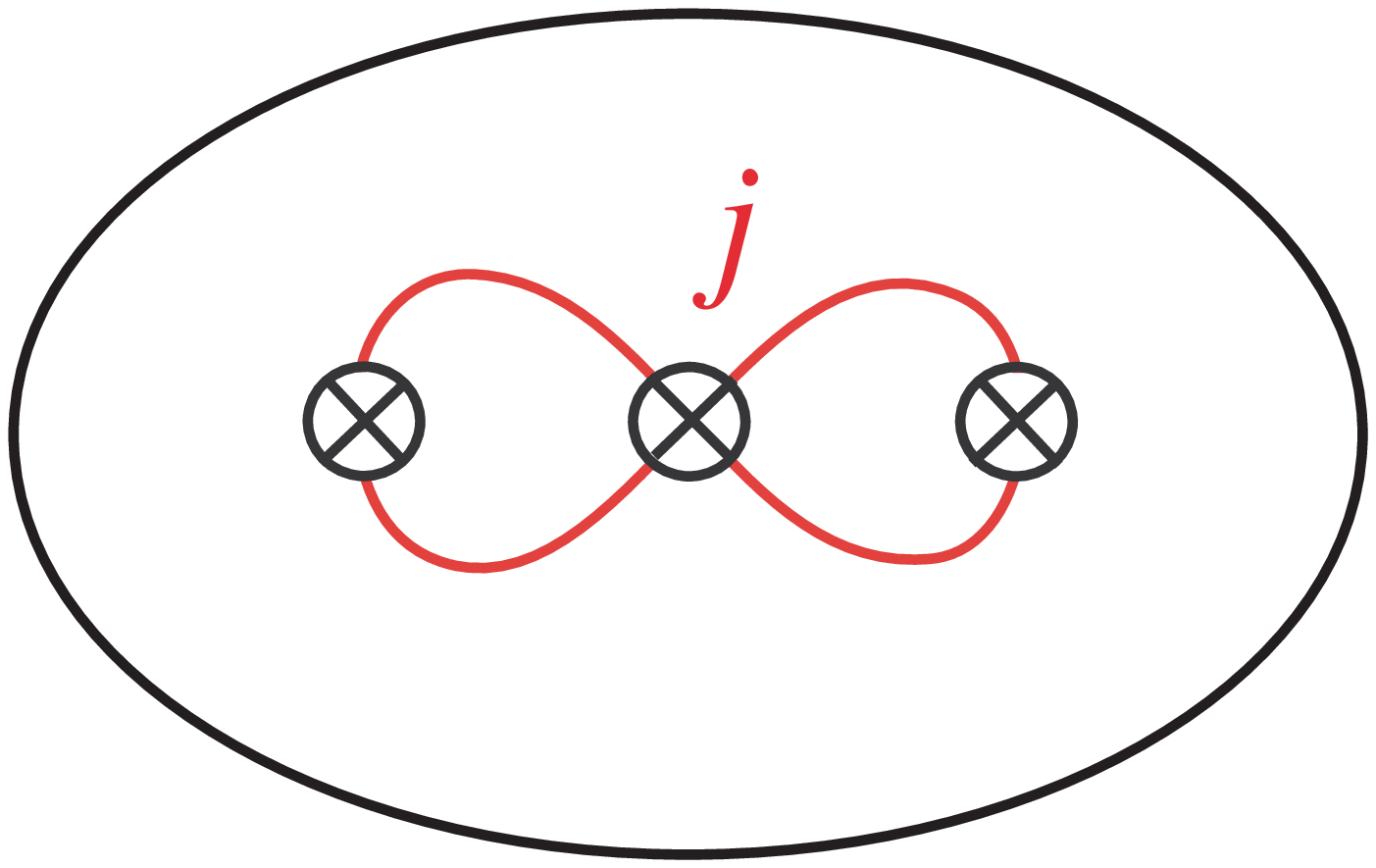} \vspace{0.19cm}
			
\epsfxsize=1.71in \epsfbox{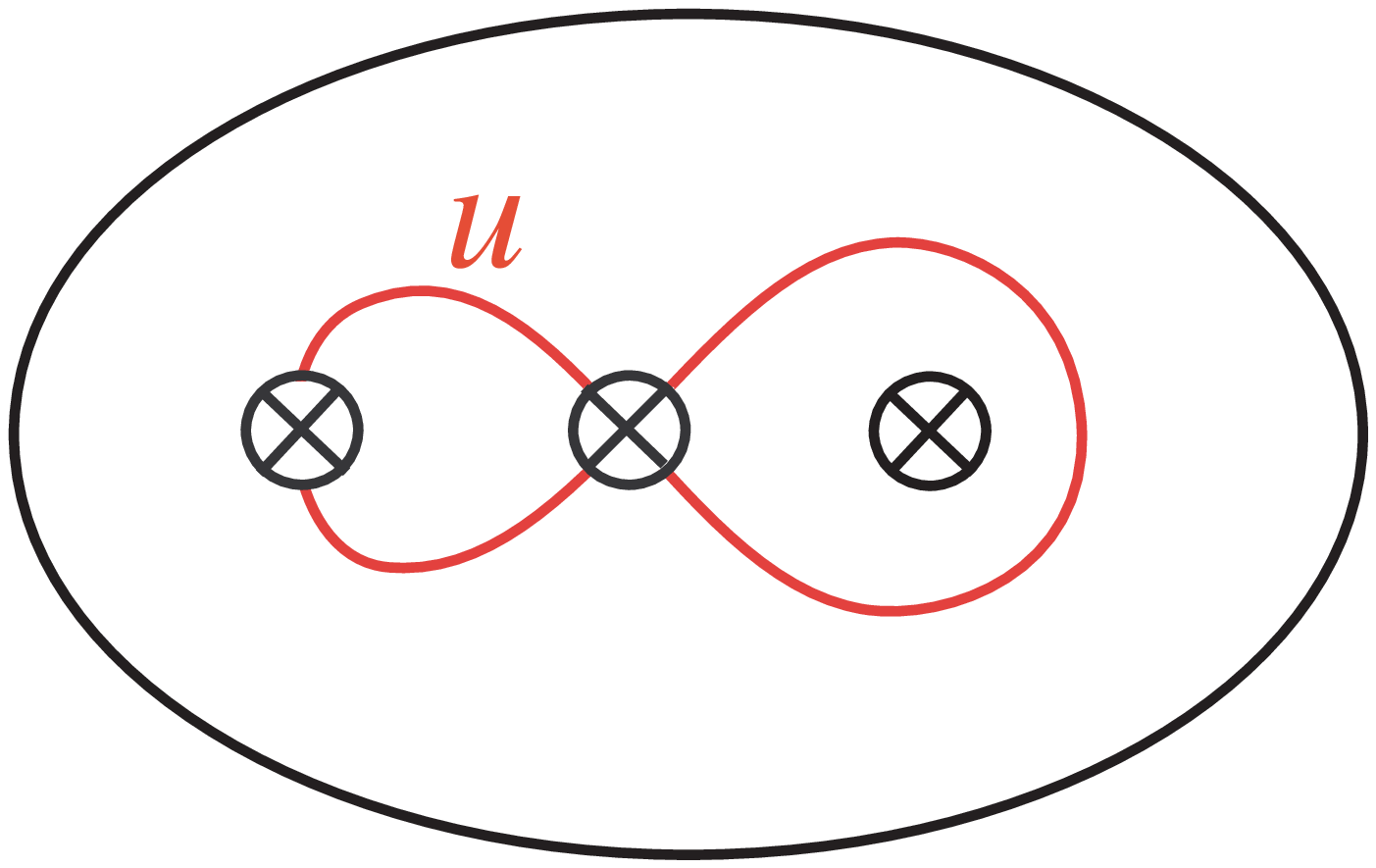}   \hspace{0.19cm}   	\epsfxsize=1.71in  \epsfbox{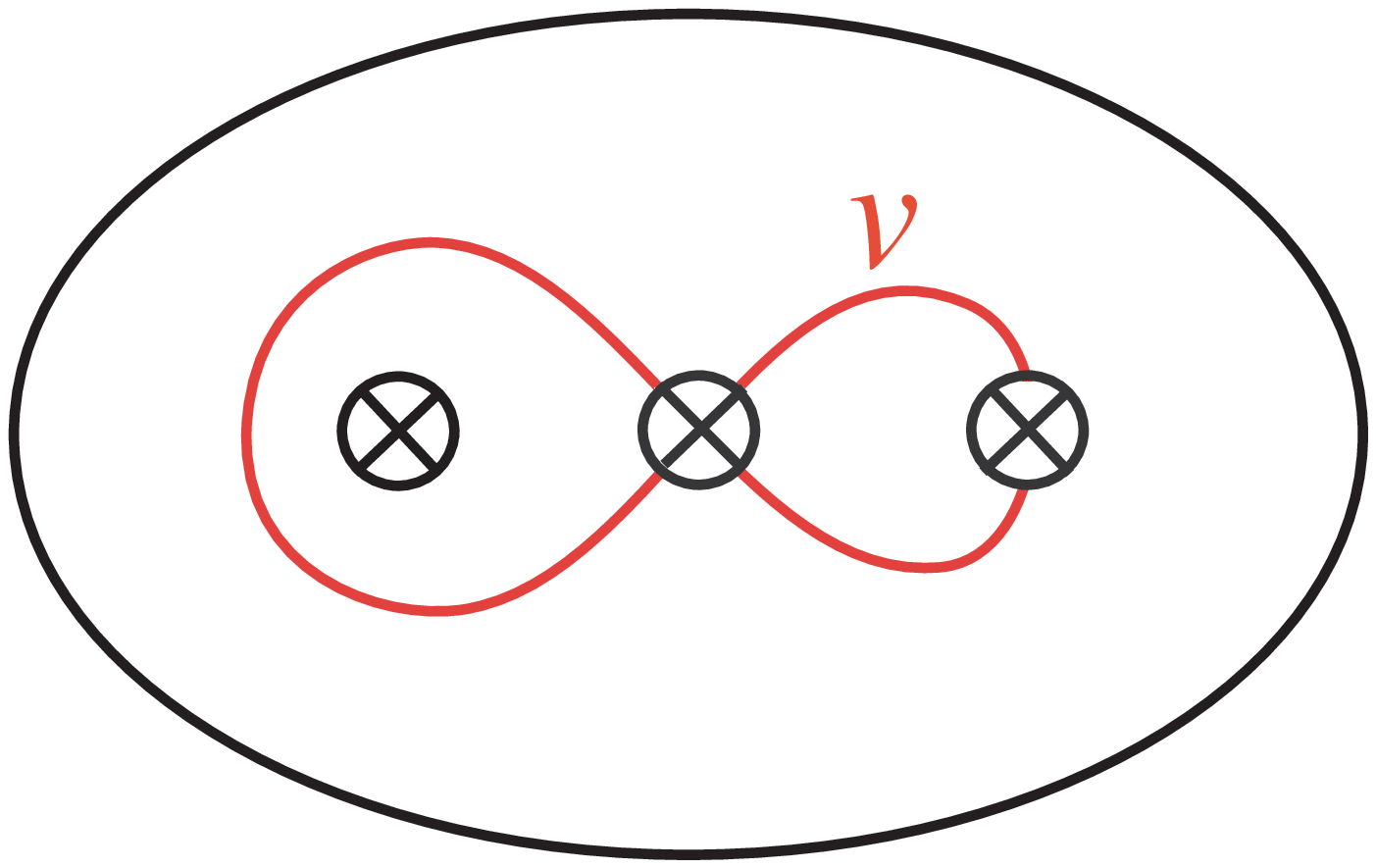}  \hspace{0.19cm}   	\epsfxsize=1.71in \epsfbox{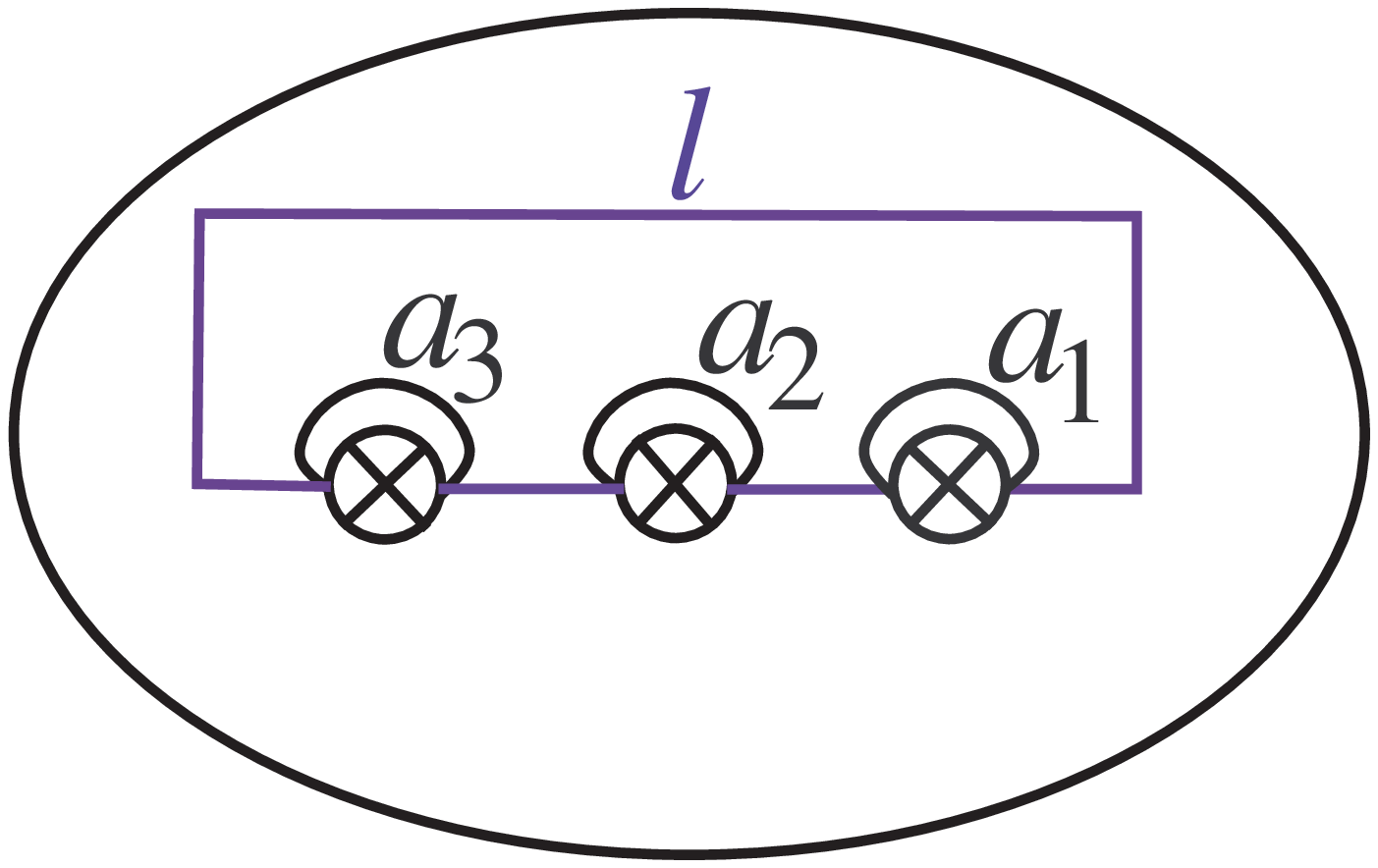} 
		
\caption{Curves in $\mathcal{B}$ }
	
\label{fig-new}
\end{center}
\end{figure} 

\begin{figure}[t]
	\begin{center}
\hspace{-0.8cm} \epsfxsize=2.3in \epsfbox{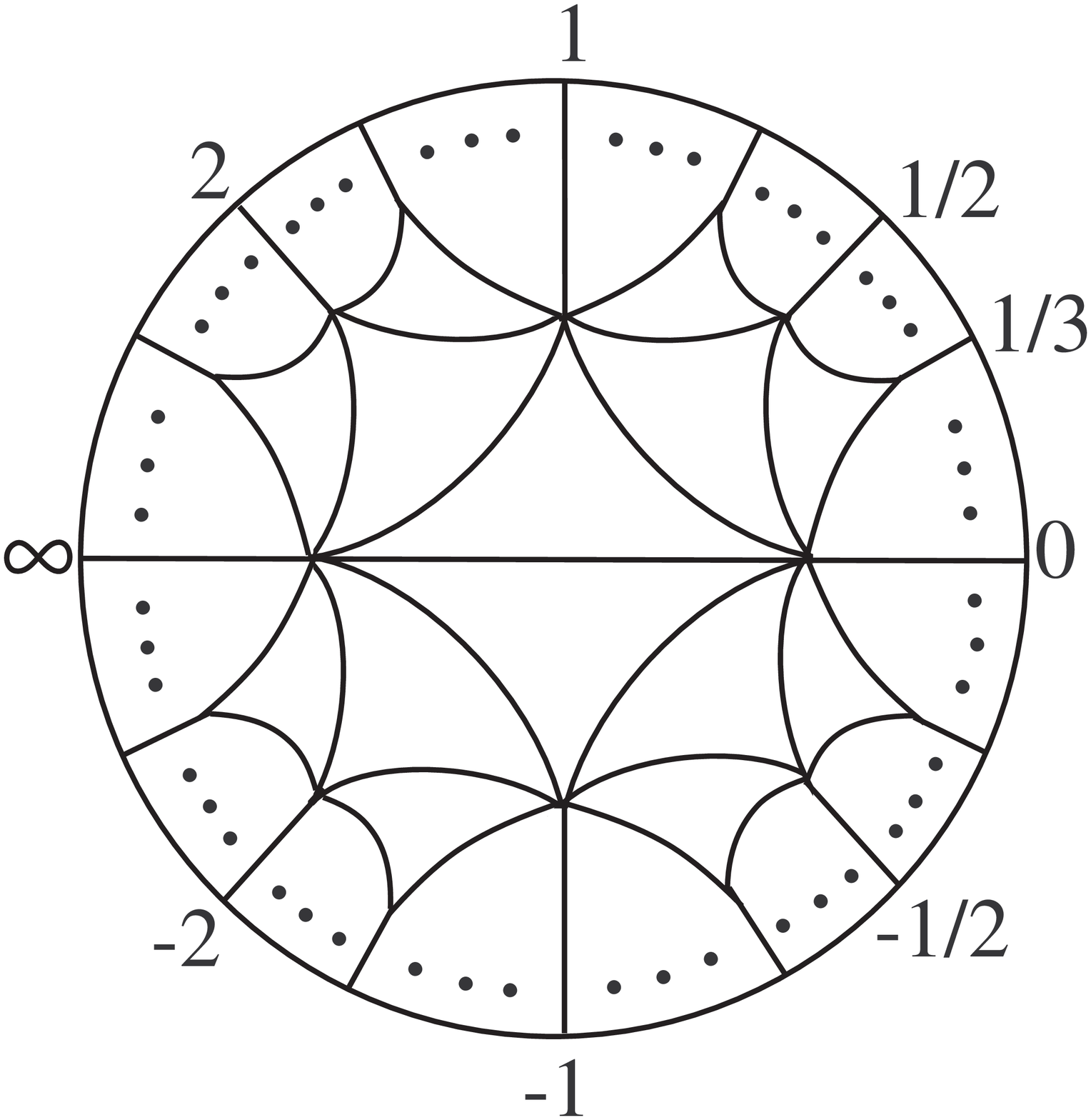} 
\hspace{-0.1cm}	\epsfxsize=2.3in \epsfbox{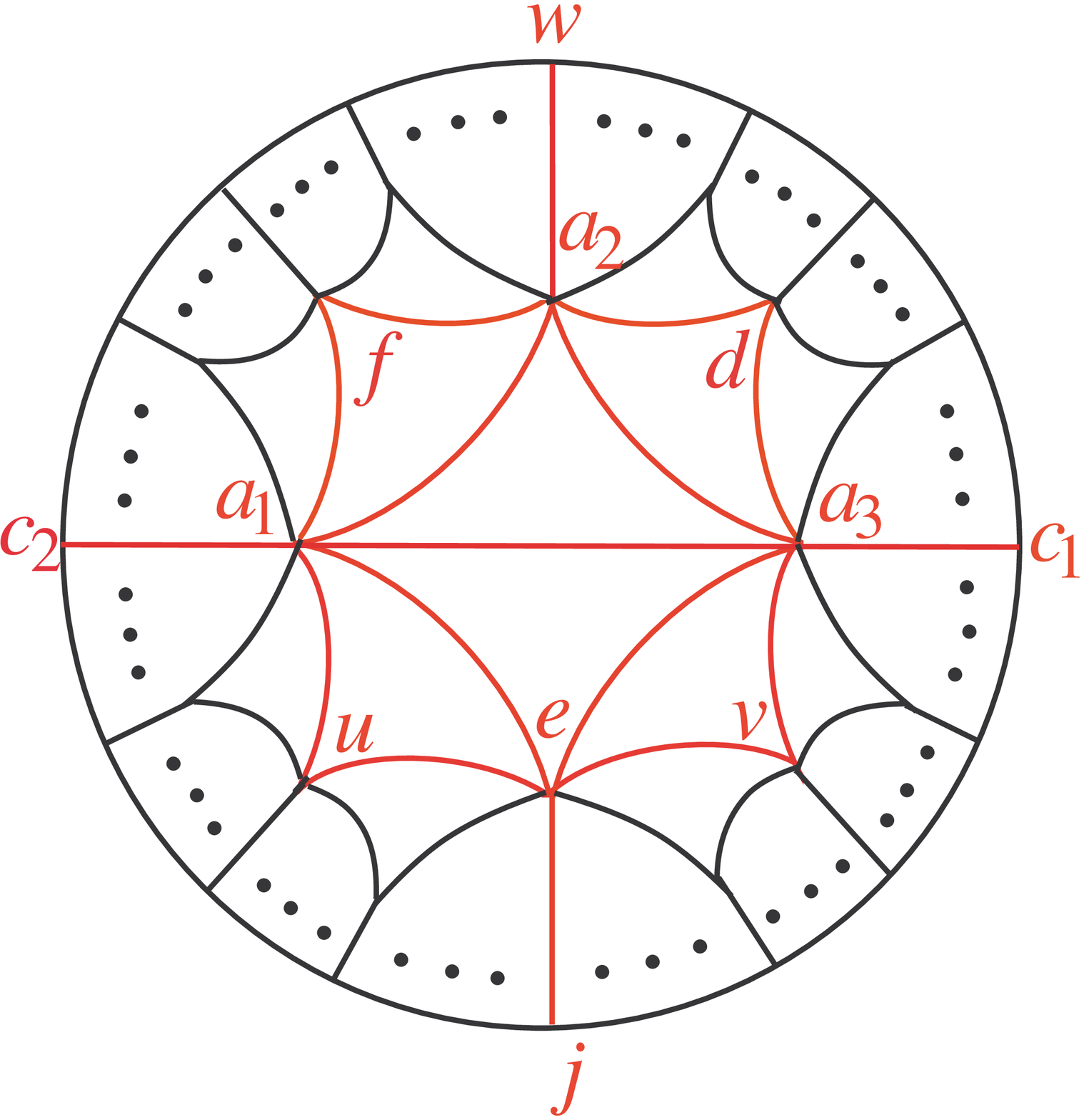} 
\hspace{-0.9cm}	\epsfxsize=1.9in \epsfbox{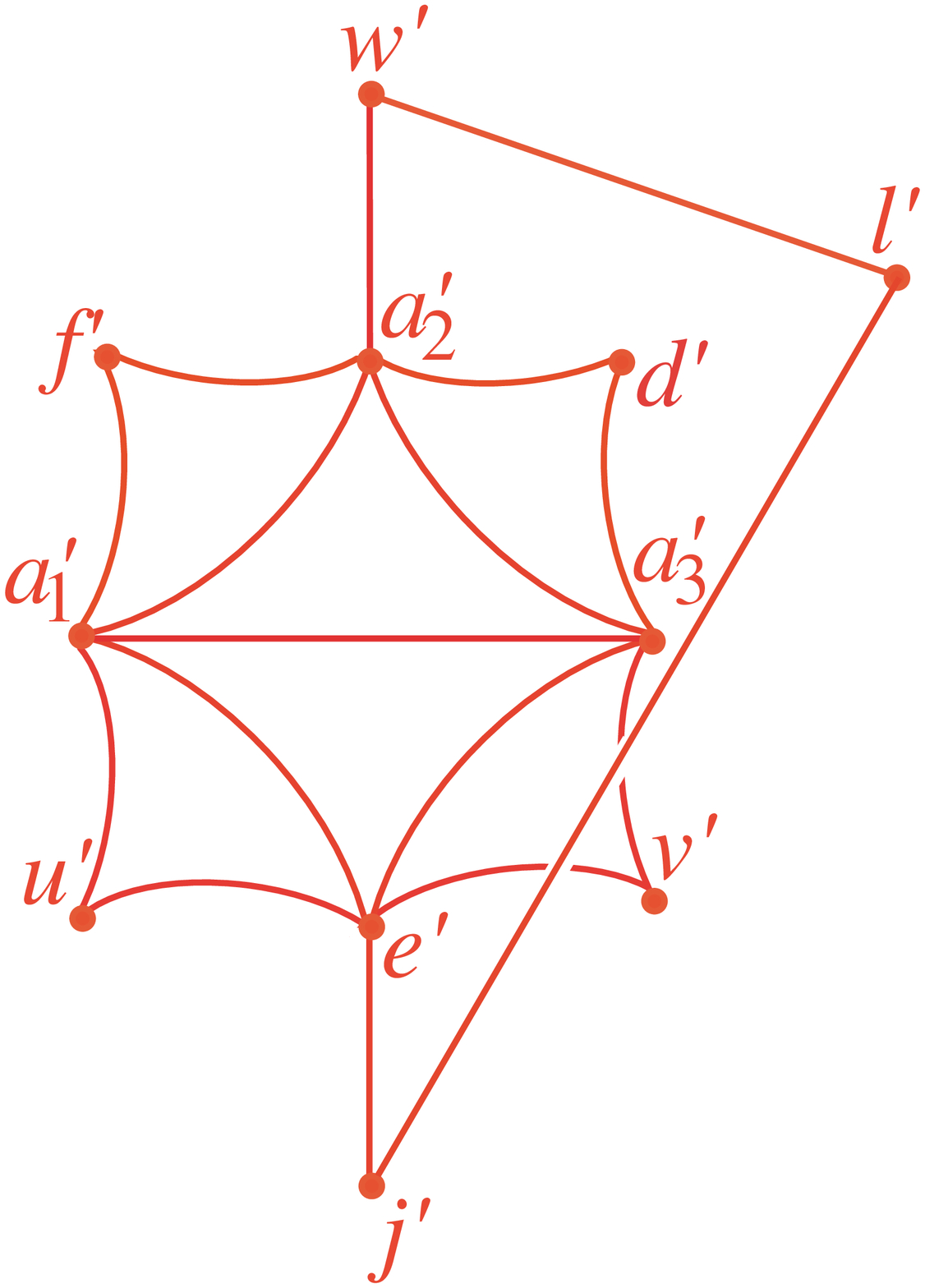} 

\hspace{0.6cm}(i) \hspace{4.9cm} (ii) \hspace{4.7cm} (iii)
		\caption{Curve complex when $(g,n)=(3,0)$ }
		\label{fig-new-2}
	\end{center}
\end{figure} 

\begin{lemma} \label{rigid} $\mathcal{B} = \{a_1, a_2, a_3, c_1, c_2, d, e, f, j, l,$ $ u, v, w\}$ is a rigid set.\end{lemma}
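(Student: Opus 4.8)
The plan is to take an arbitrary locally injective simplicial map $\lambda:\mathcal{B}\to\mathcal{C}(N)$ and construct a homeomorphism of $N$ inducing it on $\mathcal{B}$. First I would record the combinatorics of $\mathcal{B}$: for each of the thirteen curves its topological type (one-sided vs. two-sided, and the homeomorphism type of its complement --- in particular $l$ is one-sided with orientable complement $S_{1,1}$, while the remaining curves realize the other topological types on $N_{3,0}$), and for each pair of curves whether or not they can be made disjoint. Two elementary consequences of local injectivity are used repeatedly: the vertices of any simplex of $\mathcal{B}$ all lie in the star of any one of them, so $\lambda$ is injective on each simplex and therefore carries every $k$-simplex of $\mathcal{B}$ to a genuine $k$-simplex of $\mathcal{C}(N)$; in particular $\lambda$ sends edges to edges and preserves the dimension of every simplex. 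Note that $\lambda$ is not assumed to preserve \emph{non}-disjointness, and that is exactly what makes this harder than the superrigidity statement of \cite{Ir10}.

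Next I would pin down a core sub-configuration $\mathcal{B}_0\subseteq\mathcal{B}$ (say, the one built from $a_1,a_2,a_3,c_1,c_2$ together with enough of $d,e,f$) whose span is already a finite rigid subcomplex of $\mathcal{C}(N)$. One may either invoke the finite rigid set of Ilbira--Korkmaz \cite{IlK} for $N_{3,0}$, if $\mathcal{B}$ is arranged to contain it, or prove the rigidity of $\mathcal{B}_0$ by a direct finite analysis of the small portions of $\mathcal{C}(N_{3,0})$ drawn in Figure \ref{fig-new-2}: a maximal simplex of $\mathcal{B}_0$ must go to a simplex of $\mathcal{C}(N)$ of the same dimension and, because of the topological types present, of a configuration that is unique up to homeomorphism, so there is a homeomorphism of $N$ matching the two; the links in $\mathcal{C}(N)$ of the curves of that simplex --- curve complexes, or Farey graphs, of the cut-open pieces --- together with local injectivity then force the images of the remaining vertices of $\mathcal{B}_0$. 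Since passing to a subcomplex only shrinks stars, $\lambda|_{\mathcal{B}_0}$ is again locally injective, hence induced by some homeomorphism $h$ of $N$.

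Replacing $\lambda$ by $h^{-1}\circ\lambda$ we may assume $\lambda$ fixes $\mathcal{B}_0$ pointwise. The remaining step is to extend this to all of $\mathcal{B}$, one vertex at a time. For each $x\in\mathcal{B}\setminus\mathcal{B}_0$ (such as $j,l,u,v,w$, and whichever of $d,e,f$ were not put into $\mathcal{B}_0$) I would exhibit a set $S_x\subseteq\mathcal{B}_0$ of curves, each disjoint from $x$, with the property that $x$ is the \emph{only} vertex of $\mathcal{C}(N)$ disjoint from every curve in $S_x$; equivalently, the curves of $S_x$ fill $N$ cut along $x$. Since the edges joining $x$ to the curves of $S_x$ lie in $\mathcal{B}$, $\lambda(x)$ is disjoint from every (now fixed) curve of $S_x$, and therefore $\lambda(x)=x$. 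For the one-sided curve $l$ one also uses that its link in $\mathcal{C}(N)$ is the Farey graph of its orientable complement to guarantee that the chosen $S_l$ pins it down. Producing such a filling set $S_x$ inside $\mathcal{B}_0$ for every $x$ is precisely what the particular choice of the thirteen curves in Figure \ref{fig-new} is designed to allow; once all vertices of $\mathcal{B}$ are fixed, $\lambda$ is induced by $h$, so $\mathcal{B}$ is rigid.

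The step I expect to be the main obstacle is the rigidity of the core $\mathcal{B}_0$ under a map that is only locally injective. For a superinjective map one has disjointness and non-disjointness both preserved, which locks a configuration in place; a merely locally injective simplicial map could a priori fold part of $\mathcal{B}_0$ --- sending two intersecting curves to disjoint ones, or collapsing the link of a key curve onto a proper subset --- and ruling out every such possibility forces a careful, essentially case-by-case, inspection of the finite subcomplexes of $\mathcal{C}(N_{3,0})$ in Figure \ref{fig-new-2}, checking in particular that no proper folding of a maximal simplex, or of the link of $l$ or of one of the $a_i$, can stay injective on all stars. A related subtlety, handled through these same links, is verifying that $\lambda$ respects topological type, e.g.\ that $\lambda(l)$ is again one-sided with orientable complement.
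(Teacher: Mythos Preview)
Your high-level plan (fix a core on which a homeomorphism already exists, then pin down the remaining curves by uniqueness) is exactly the paper's plan, but the execution you sketch breaks down at the one place where the lemma is genuinely delicate: controlling $l$ and the two-sided curves $c_1,c_2,w,j$.

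There is a circularity you do not resolve. In $\mathcal{C}(N_{3,0})$ the vertex $[l]$ is the cone point: it is joined precisely to the two-sided curves and to nothing else, and its link is an infinite \emph{discrete} set (two distinct essential curves on $S_{1,1}$ are never disjoint), not a Farey graph. Thus the only curves in $\mathcal{B}$ disjoint from $l$ are $c_1,c_2,w,j$, and conversely each of $c_1,c_2,w,j$ is pinned down in the paper only by the pair $\{l, a_k\}$ (or $\{l,e\}$) once $l$ is known. So you cannot put $c_1,c_2$ into a core $\mathcal{B}_0$ that is shown rigid \emph{before} $l$ is handled (there is no filling set for $c_1$ inside $\{a_1,a_2,a_3,d,e,f\}$: the only curve there disjoint from $c_1$ is $a_3$), and you cannot produce a filling set $S_l\subset\mathcal{B}_0$ for $l$ either, since none of $a_i,d,e,f,u,v$ is disjoint from $l$. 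Invoking Ilbira--Korkmaz does not help: their set $\mathcal{X}$ for $(g,n)=(3,0)$ consists of the curves $a_i,a_{i,j}$, and there is no reason it sits inside $\mathcal{B}$.

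The paper breaks this circularity by a different mechanism. After fixing $a_1,a_2,a_3$ (and hence $d,e,f,u,v$, each of which \emph{is} uniquely determined by two of the $a_k$) it does \emph{not} try to characterize $l$ by disjointness. Instead it uses Scharlemann's global picture of $\mathcal{C}(N_{3,0})$ and the path $[e]\!-\![j]\!-\![l]\!-\![w]\!-\![a_2]$ of length four. Local injectivity on the stars of $e$ and $a_2$ (this is where $u$ and $v$ are used) forces $\lambda([j])$ and $\lambda([w])$ to avoid the already-placed neighbours of $e'$ and $a_2'$, so the image path must leave the disk at both ends and meet in the middle at the unique cone point; hence $\lambda([l])=[l]$. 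Only then are $c_1,c_2,w,j$ recovered, each as the unique curve disjoint from $l$ and one fixed curve. Your proposal is missing precisely this path argument, which is the heart of the proof and the reason $u,v,j,w$ are in $\mathcal{B}$ at all.
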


\begin{proof} Let $\lambda: \mathcal{B} \rightarrow \mathcal{C}(N)$ be a  locally injective simplicial map. Since $a_1, a_2, a_3$ are pairwise disjoint, $\lambda$ is locally injective and $(g,n)= (3,0)$, we can see easily that $\lambda([a_1])$, $\lambda([a_2])$ and $\lambda([a_3])$ have pairwise disjoint representatives which are 1-sided curves on $N$. So, there exists a homeomorphism $h$ of $N$ such that $h([x])=\lambda([x])$ for each $x \in \{a_1, a_2, a_3\}$. Since $d$ is the unique nontrivial simple closed curve up to isotopy disjoint from and nonisotopic to each of $a_2, a_3$, and also nonisotopic to $a_1$ we see that $h([d])=\lambda([d])$. Similarly, we can see that $h([e])=\lambda([e])$ and $h([f])=\lambda([f])$. Since $v$ is the unique nontrivial simple closed curve up to isotopy disjoint from and nonisotopic to each of $e, a_3$, and also nonisotopic to $a_1$ we have $h([v])=\lambda([v])$. Similarly, $h([u])=\lambda([u])$.
	
To see that $h([l])=\lambda([l])$ we will consider the description of this curve complex given by Scharlemann in \cite{Sch}: the complex is as given in the first part of Figure \ref{fig-new-2} except that there is also a cone point, the vertex $[l]$, which is connected only to every vertex on the boundary of the disk. The vertices in the interior of the disk correspond to 1-sided curves which have nonorientable complements on $N$. The vertices on the boundary of the disk correspond to 2-sided curves. The cone point is $[l]$ and $l$ is the unique 1-sided curve up to isotopy which has orientable complement on $N$ (see \cite{Sch}). We will consider the path of lenght four, $[e] \rightarrow [j] \rightarrow [l] \rightarrow [w] \rightarrow [a_2]$, between the vertices $[e]$ and $[a_2]$. 

Let $a'_1, a'_2, a'_3, d', e', f', u', v', w', j', l'$ be minimally intersecting representatives of $\lambda([a_1]),$ $\lambda([a_2]), \dots, \lambda([l])$ respectively. By the above arguments we see that $a'_1, a'_2, a'_3, d', $ $e', f', u',$ $ v'$ are as shown in Figure \ref{fig-new-2} (iii). Since $\lambda$ is locally injective, $\lambda([w])$ is not equal to any of $\lambda([f])$, $\lambda([d])$, $\lambda([a_1])$, $\lambda([a_3])$. Since there is an edge between $w$ and $a_2$, there will be an edge between $w'$ and $a'_2$ as shown in Figure \ref{fig-new-2} (iii). Similarly, $\lambda([j])$ is not equal to any of $\lambda([u])$, $\lambda([v])$, $\lambda([a_1])$, $\lambda([a_3])$. Since there is an edge between $e$ and $j$, there will be an edge between $e'$ and $j'$ as shown in Figure \ref{fig-new-2} (iii).

Since $\lambda$ is locally injective it sends edges to edges. Since there is an edge between $w$ and $l$, there will be an edge between $w'$ and $l'$. Since there is an edge between $j$ and $l$, there will be an edge between $j'$ and $l'$. Hence, 
we can see that $\lambda([e]) \rightarrow \lambda([j]) \rightarrow \lambda([l]) \rightarrow \lambda([w]) \rightarrow \lambda([a_2])$ is a path of lenght four and 
$j', l', w'$ are as shown in Figure \ref{fig-new-2} (iii). This is possible only 
when $l'$ is isotopic to $l$. Hence, $\lambda([l])=[l]$. Since $l$ is the unique 1-sided curve up to isotopy which has orientable complement on $N$, we also have
$h([l])=[l]$. So, $h([l])=\lambda([l])$. 

Since $c_1$ is the unique nontrivial simple closed curve up to isotopy disjoint from and nonisotopic to each of $a_3$ and $l$ we have $h([c_1])=\lambda([c_1])$. Since $c_2$ is the unique nontrivial simple closed curve up to isotopy disjoint from and nonisotopic to each of $a_1$ and $l$ we have $h([c_2])=\lambda([c_2])$. Since $w$ is the unique nontrivial simple closed curve up to isotopy disjoint from and nonisotopic to each of $a_2$ and $l$ we have $h([w])=\lambda([w])$. Since $j$ is the unique nontrivial simple closed curve up to isotopy disjoint from and nonisotopic to each of $e$ and $l$ we have $h([j])=\lambda([j])$. We proved that $h([x])=\lambda([x])$ for every $x \in \mathcal{B}$. Hence, $\mathcal{B}$ is a rigid set.\end{proof}\medskip   

  
\begin{figure}
	\begin{center}
		\epsfxsize=1.71in \epsfbox{rigid-code-fig131.eps} \hspace{0.19cm}    
		\epsfxsize=1.71in \epsfbox{rigid-code-fig132.eps}    	
	
		\caption{ $(g,n)=(3,0)$}
				\label{fig-gen}
	\end{center}
\end{figure} 
  
$G= \{t_{c_1}, t_{c_2}, y\}$ is a generating set for the mapping class group, $Mod_N$, where $y$ is the cross-cap slide of $a_3$ along $c_2$, and $t_x$ is the Dehn twist about $x$, see Figure \ref{fig-gen}, see also Theorem 3.15 given by the author in \cite{Ir10} and Theorem 4.14 given by Korkmaz in \cite{K2}. 
  
\begin{lemma} \label{L_f_3} $\forall \ f \in G= \{t_{c_1}, t_{c_2}, y\}$, $\exists$ a set $L_f \subset \mathcal{B}$ such that the pointwise stabilizer of $L_f$ is the center of $Mod_N$ and $f(L_f) \subset \mathcal{B}$.\end{lemma}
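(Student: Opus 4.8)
The plan is to treat the three generators $f \in G = \{t_{c_1}, t_{c_2}, y\}$ one at a time, in each case reading an explicit list $L_f \subseteq \mathcal{B}$ off Figures \ref{fig-new} and \ref{fig-gen} and checking the two required properties: that $f(L_f) \subseteq \mathcal{B}$, and that the pointwise stabilizer of $L_f$ in $Mod_N$ is $Z(Mod_N)$. For the stabilizer assertion I would, in every case, run the same chain of uniqueness observations used in the proof of Lemma \ref{rigid} — "$d$ is the unique nontrivial curve up to isotopy disjoint from and non-isotopic to each of $a_2, a_3$ and non-isotopic to $a_1$", the analogous statements for $e, f, u, v$, and the Scharlemann-picture argument that recovers the cone vertex $l$ from the length-four path $[e]\to[j]\to[l]\to[w]\to[a_2]$ — so as to conclude that a mapping class fixing every curve of $L_f$ must in fact fix every curve of $\mathcal{B}$, hence act trivially on $\mathcal{C}(N)$, hence lie in the kernel of $Mod_N \to \mathrm{Aut}(\mathcal{C}(N))$; for $(g,n)=(3,0)$ this kernel is exactly $Z(Mod_N)$, which is cyclic of order two and acts trivially on $\mathcal{C}(N)$. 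This is also why ``pointwise stabilizer equals the center'' is the sharpest statement available here, and why Theorem \ref{B}(ii) omits $(g,n)=(3,0)$.

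For the Dehn twists $t_{c_1}$ and $t_{c_2}$ this is the lighter case. Every curve of $\mathcal{B}$ having a representative disjoint from $c_i$ is fixed by $t_{c_i}$, so I would let $L_{t_{c_i}}$ consist of $c_i$ together with all curves of $\mathcal{B}$ disjoint from $c_i$, adjoining if necessary one or two further curves of $\mathcal{B}$ that meet $c_i$ but whose $t_{c_i}$-image can be identified with a named curve of $\mathcal{B}$. Then $f(L_f)\subseteq\mathcal{B}$ is immediate for the disjoint curves and a short check for the rest, and the whole burden is the stabilizer computation, for which I must confirm that $L_{t_{c_i}}$ still contains enough curves — in particular $a_1, a_2, a_3$, or curves from which they are forced, together with the data detecting $l$ — for the uniqueness chain above to apply; this is exactly what the placement of $c_1$ and $c_2$ relative to the remaining curves of Figure \ref{fig-new} is arranged to provide.

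The crosscap slide $y$ of $a_3$ along $c_2$ is handled in the same spirit: its support is a one-holed Klein bottle neighborhood of $a_3 \cup c_2$, so every curve of $\mathcal{B}$ disjoint from this subsurface is fixed by $y$ and is put into $L_y$, while each curve of $\mathcal{B}$ meeting the support must be treated individually by computing its $y$-image directly, keeping it in $L_y$ only if that image is a named curve in $\mathcal{B}$. I expect the main obstacle to lie precisely in this explicit crosscap-slide bookkeeping — crosscap slides are less transparent than Dehn twists, and one must keep track of the non-orientable local picture near the two crosscaps and of the orientation choices involved — together with, uniformly across the three cases, verifying that after the necessary deletions the set $L_f$ is still rich enough to force a fixing mapping class into the center, i.e. that no deletion destroys the ability of the uniqueness arguments of Lemma \ref{rigid} — especially the recovery of the cone vertex $l$ — to reconstruct the whole of $\mathcal{B}$.
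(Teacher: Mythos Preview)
Your strategy is sound in outline but considerably heavier than what the paper actually does, and the parts you flag as the ``main obstacle'' are real obstacles that you have not resolved: for $t_{c_1}$, say, the curves $a_1$ and $a_2$ both meet $c_1$, their $t_{c_1}$-images are not curves of $\mathcal{B}$, and without them your reconstruction chain from Lemma~\ref{rigid} (which begins at $a_1,a_2,a_3$) cannot get started. So as written your proposal is a plan with an acknowledged gap rather than a proof.

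The paper avoids all of this by choosing much smaller sets. It takes $L_f$ to be a three-element set of \emph{two-sided} curves: $V_1=\{c_1,c_2,w\}$ for $f=t_{c_1}$ and for $f=y$, and $V_2=\{c_1,c_2,j\}$ for $f=t_{c_2}$. The stabilizer assertion is then a single citation: such a configuration of two-sided curves on $N_{3,0}$ has pointwise stabilizer equal to the center $Z(Mod_N)\cong\mathbb{Z}_2$ (this is the content invoked from \cite{Ir10}, Lemma~4.2, together with \cite{St2}). No reconstruction of $\mathcal{B}$, no cone-vertex argument, and no crosscap-slide bookkeeping on a large collection of curves is needed. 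The inclusion $f(L_f)\subset\mathcal{B}$ becomes three one-line computations in each case, e.g.\ $t_{c_1}(c_1)=c_1$, $t_{c_1}(c_2)=j$, $t_{c_1}(w)=c_2$, and $y(c_1)=c_1$, $y(c_2)=c_2$, $y(w)=j$. What this buys over your approach is that the two requirements on $L_f$ decouple completely: the stabilizer condition holds for any such chain of two-sided curves, and the image condition is checked on only three curves rather than on most of $\mathcal{B}$.
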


\begin{proof} The proof follows as in the proof of Lemma 4.2 in \cite{Ir10}. Consider the curves in Figure \ref{fig-new}. Let $V_1 = \{c_1, c_2, w\}$ and $V_2 = \{c_1, c_2, j\}$. 
The pointwise stabilizer of each of $V_1$ and $V_2$ is the center of $Mod_N$. The center of $Mod_N$ is isomorphic to $\mathbb{Z}_2$ (Theorem 6.1 in \cite{St2}). Let $f \in G$. For $f=c_1$, let $L_f = V_1$. We have $t_{c_1}(c_1)= c_1$, $t_{c_1}(c_2)= j$, $t_{c_1}(w)= c_2$ and $f(L_f) \subset \mathcal{B}$. For $f=c_2$, let $L_f = V_2$. We have $t_{c_2}(c_1)= w$, $t_{c_2}(c_2)= c_2$, $t_{c_2}(j)= c_1$ and $f(L_f) \subset \mathcal{B}$. For $f=y$, let 
$L_f= V_1$. We have $y(c_1)= c_1$, $y(c_2)= c_2$, $y(w)=j$ and
$f(L_f) \subset \mathcal{B}$.\end{proof}\\

The proof of the following theorem is similar to the proof of Theorem 3.14 given by the author in \cite{Ir10}. 
  
\begin{theorem} \label{B-3} If $(g,n) = (3,0)$, then there exists a sequence $\mathcal{E}_1 \subset \mathcal{E}_2 \subset \dots \subset \mathcal{E}_n \subset \dots$  such that 
		
		(i) $\mathcal{E}_i$ is a finite rigid set in $\mathcal{C}(N)$ for all $i \in \mathbb{N}$, and 
		
		(ii) $\bigcup_{i \in \mathbb{N}} \mathcal{E}_i = \mathcal{C}(N)$. \end{theorem}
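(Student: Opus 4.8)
The plan is to follow the inductive scheme of Aramayona--Leininger \cite{AL2} (compare the proof of Theorem 3.14 in \cite{Ir10}), using the finite rigid set $\mathcal{B}$ of Lemma \ref{rigid} as a building block and the subsets $L_f$ of Lemma \ref{L_f_3} to glue its translates together. Fix $G=\{t_{c_1},t_{c_2},y\}$, and for $f\in G$ let $L_f\subset\mathcal{B}$ be as in Lemma \ref{L_f_3}; for $f^{-1}$ set $L_{f^{-1}}:=f(L_f)$. Then in every case $L_f\subset\mathcal{B}$, $f(L_f)\subset\mathcal{B}$, and (by Lemma \ref{L_f_3}, using that the center of $Mod_N$ is normal) the pointwise stabilizer of $L_f$ is the center of $Mod_N$. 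Put $\mathcal{E}_0:=\mathcal{B}$ and, for $i\ge 1$,
\[ \mathcal{E}_i:=\bigcup\{\, g(\mathcal{B}) : g \text{ is a product of at most } i \text{ elements of } G\cup G^{-1}\,\}. \]
Each $\mathcal{E}_i$ is finite because $G$ and $\mathcal{B}$ are, and $\mathcal{E}_0\subseteq\mathcal{E}_1\subseteq\cdots$ (pass to a subsequence if one wants the inclusions proper). Statement (ii) of Theorem \ref{B-3} is then immediate: $\mathcal{B}$ contains nontrivial curves of every topological type, $Mod_N$ acts transitively on the isotopy classes of curves of a fixed topological type, so every vertex of $\mathcal{C}(N)$ equals $g(v)$ for some $v\in\mathcal{B}$ and $g\in Mod_N$, and writing $g$ as a word of length $k$ in $G\cup G^{-1}$ puts $g(v)$ in $\mathcal{E}_k$; since $\mathcal{C}(N)$ is finite dimensional, every simplex has all of its finitely many vertices in a common $\mathcal{E}_k$ and so lies in $\mathcal{E}_k$.

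Statement (i), that each $\mathcal{E}_i$ is rigid, I would prove by induction on $i$, the case $i=0$ being Lemma \ref{rigid}. Assume $\mathcal{E}_{i-1}$ is rigid and let $\lambda\colon\mathcal{E}_i\to\mathcal{C}(N)$ be locally injective. Its restriction to $\mathcal{E}_{i-1}$ is simplicial and locally injective, since the star of a vertex in $\mathcal{E}_{i-1}$ lies in its star in $\mathcal{E}_i$; hence by the inductive hypothesis there is a homeomorphism $h$ of $N$ with $\lambda=h$ on $\mathcal{E}_{i-1}$, and after replacing $\lambda$ by $h^{-1}\lambda$ we may assume $\lambda$ is the identity on $\mathcal{E}_{i-1}$, whereupon it suffices to show $\lambda$ is the identity on $\mathcal{E}_i$. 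A vertex of $\mathcal{E}_i\setminus\mathcal{E}_{i-1}$ lies in some $g(\mathcal{B})$ with $g=g'f$, where $f\in G\cup G^{-1}$ is the innermost factor and $g'$ is a word of length $i-1$, so $g(\mathcal{B})=g'(f(\mathcal{B}))$ and $g'(\mathcal{B})\subseteq\mathcal{E}_{i-1}$. Since $g(\mathcal{B})$ is a homeomorphic image of the rigid set $\mathcal{B}$, it is rigid, so $\lambda|_{g(\mathcal{B})}$ (again locally injective) is induced by a homeomorphism $\phi$. Now $g(L_f)=g'(f(L_f))\subseteq g'(\mathcal{B})\subseteq\mathcal{E}_{i-1}$, while also $g(L_f)\subseteq g(\mathcal{B})$; hence on $g(L_f)$ we have $\phi=\lambda=\mathrm{id}$, so $g^{-1}\phi g$ fixes $L_f$ pointwise and therefore lies in the center of $Mod_N$ by Lemma \ref{L_f_3}. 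As the nontrivial central element of $Mod_N$ acts trivially on $\mathcal{C}(N)$, so does $g^{-1}\phi g$, hence so does $\phi$, and thus $\lambda=\phi=\mathrm{id}$ on $g(\mathcal{B})$. Ranging over all such $g$ shows $\lambda=\mathrm{id}$ on $\mathcal{E}_i$, which completes the induction and proves (i).

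The main obstacle is this inductive gluing step: one must guarantee that the homeomorphisms realizing $\lambda$ on the various overlapping translates $g(\mathcal{B})$ are consistent with one another and with the identity on $\mathcal{E}_{i-1}$, and this is exactly what the configurations $L_f$ of Lemma \ref{L_f_3} are engineered to force. The subtlety peculiar to this low-genus case is that such a homeomorphism is pinned down only up to the nontrivial center of $Mod_N$, so the argument genuinely uses that the central involution acts trivially on $\mathcal{C}(N)$; this is also why one cannot expect trivial pointwise stabilizers when $(g,n)=(3,0)$, and why clause (ii) of Theorem \ref{B} is asserted only for $g+n\ge 5$.
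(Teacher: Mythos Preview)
Your proposal is correct and follows essentially the same approach as the paper: both build the sequence from translates of $\mathcal{B}$ under words in $G\cup G^{-1}$, and both use the sets $L_f$ of Lemma \ref{L_f_3} together with the fact that the central involution acts trivially on $\mathcal{C}(N)$ to force consistency of the inducing homeomorphisms on overlapping pieces. The only cosmetic difference is that the paper glues the larger pieces $f^{\pm 1}(\mathcal{E}_{k-1})$ (one per generator) to $\mathcal{E}_{k-1}$ via the overlap $f(L_f)\subset\mathcal{E}_{k-1}\cap f(\mathcal{E}_{k-1})$, whereas you glue each individual translate $g(\mathcal{B})=g'f(\mathcal{B})$ to $g'(\mathcal{B})\subset\mathcal{E}_{i-1}$ via $g(L_f)$; up to this bookkeeping and an index shift the two constructions and arguments coincide.
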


\begin{proof} The proof will follow similar to the proof given by the author in section 4 in \cite{Ir10}. We note that the difference is that to control the image of rigid sets under locally injective simplicial maps, we used a bigger set (with more curves $u, v$) than the set considered in Lemma 4.1 in \cite{Ir10}. So, even though there are similarities, the sequence we will construct here whose elements are rigid sets will be different from the sequence we constructed whose elements were superrigid sets in \cite{Ir10}. 
	
We can see that if $x$ is a vertex in $\mathcal{C}(N)$ then there 
exists $r \in Mod_N$ and a vertex $y$ in $\mathcal{B}$ such that $r(y)=x$. Let $\mathcal{E}_1 = \mathcal{B}$ and $\mathcal{E}_k = \mathcal{E}_{k-1} \cup (\bigcup _{f \in G} (f(\mathcal{E}_{k-1}) \cup f^{-1}(\mathcal{E}_{k-1})))$ for $k \geq 2$. We see that 
$\bigcup _{k=1} ^{\infty} \mathcal{E}_k = \mathcal{C}(N)$. We will prove that $\mathcal{E}_k$ is a rigid set for every $k$ by induction on $k$. 
By Lemma \ref{rigid}, $\mathcal{E}_1$ is a rigid set. Assume that $\mathcal{E}_{k-1}$ is rigid for some $k \geq 2$. We want to see that $\mathcal{E}_{k}$ is rigid. Let $\lambda: \mathcal{E}_k \rightarrow \mathcal{C}(N)$ be a locally injective simplicial map. It is easy to see that $\lambda$ restricts to a locally injective simplicial map on $\mathcal{E}_{k-1}$. Since $\mathcal{E}_{k-1}$ is rigid, there exists a 
homeomorphism $h: N \rightarrow N$ such that $h([x]) = \lambda([x])$ for all $x \in \mathcal{E}_{k-1}$. We will show that $h([x]) = \lambda([x])$ for each $x \in \mathcal{E}_k = \mathcal{E}_{k-1} \cup (\bigcup _{f \in G} (f(\mathcal{E}_{k-1}) \cup f^{-1}(\mathcal{E}_{k-1})))$.
Let $f \in G$. Since $\mathcal{E}_{k-1}$ is a rigid set, $f(\mathcal{E}_{k-1})$ is also a rigid set. Since the map $\lambda$ restricts to a locally injective simplicial map on $f(\mathcal{E}_{k-1})$, there exists a homeomorphism $h_f$ of $N$ such that $h_f([x]) = \lambda([x])$ for all $x \in f(\mathcal{E}_{k-1})$. By Lemma \ref{L_f_3}, there exists $L_f \subset \mathcal{B}$ such that $f(L_f) \subset \mathcal{B}$ and 
the pointwise stabilizer of $L_f$ is the center of $Mod_N$. Since $L_f \subset \mathcal{B} \subset \mathcal{E}_{k-1}$ and $f(L_f) \subset \mathcal{B} \subset \mathcal{E}_{k-1}$, we have $f(L_f) \subset \mathcal{E}_{k-1} \cap f(\mathcal{E}_{k-1})$. Hence, $h_f = h$ or $h_f = h \circ i$ where $i$ is the generator for the center of $Mod_N$ (the center is isomorphic to $\mathbb{Z}_2$). Similarly, the map $\lambda$ restricts to a locally injective simplicial map on $f^{-1}(\mathcal{E}_{k-1})$. Since $\mathcal{E}_{k-1}$ is a rigid set, $f^{-1}(\mathcal{E}_{k-1})$ is also a rigid set. So, there exists a homeomorphism 
$h'_f$ of $N$ such that $h'_f([x]) = \lambda([x])$ for all $x \in f^{-1}(\mathcal{E}_{k-1})$. Since 
$L_f \subset \mathcal{B} \subset \mathcal{E}_{k-1}$ and $f(L_f) \subset \mathcal{B} \subset \mathcal{E}_{k-1}$, we have $L_f \subset \mathcal{E}_{k-1} \cap f^{-1}(\mathcal{E}_{k-1})$. So, $h'_f = h$ or $h'_f = h \circ i$. Since $i([x])=[x]$ for every vertex $[x] \in \mathcal{C}(N)$ we get $h([x]) = \lambda([x])$ for each $x \in f(\mathcal{E}_{k-1}) \cup f^{-1}(\mathcal{E}_{k-1})$. Since this is true for each $f \in G$, we have
$h([x]) = \lambda([x])$ for each $x \in \mathcal{E}_k$. So, $\mathcal{E}_k$ is a rigid set. By induction $\mathcal{E}_k$ is a finite rigid set for all $k \geq 1$.\end{proof}
 
\section{Exhaustion of $\mathcal{C}(N)$ by finite rigid sets when 
$g+n \geq 5$}

 
\begin{figure}
	\begin{center}  \epsfxsize=2.59in \epsfbox{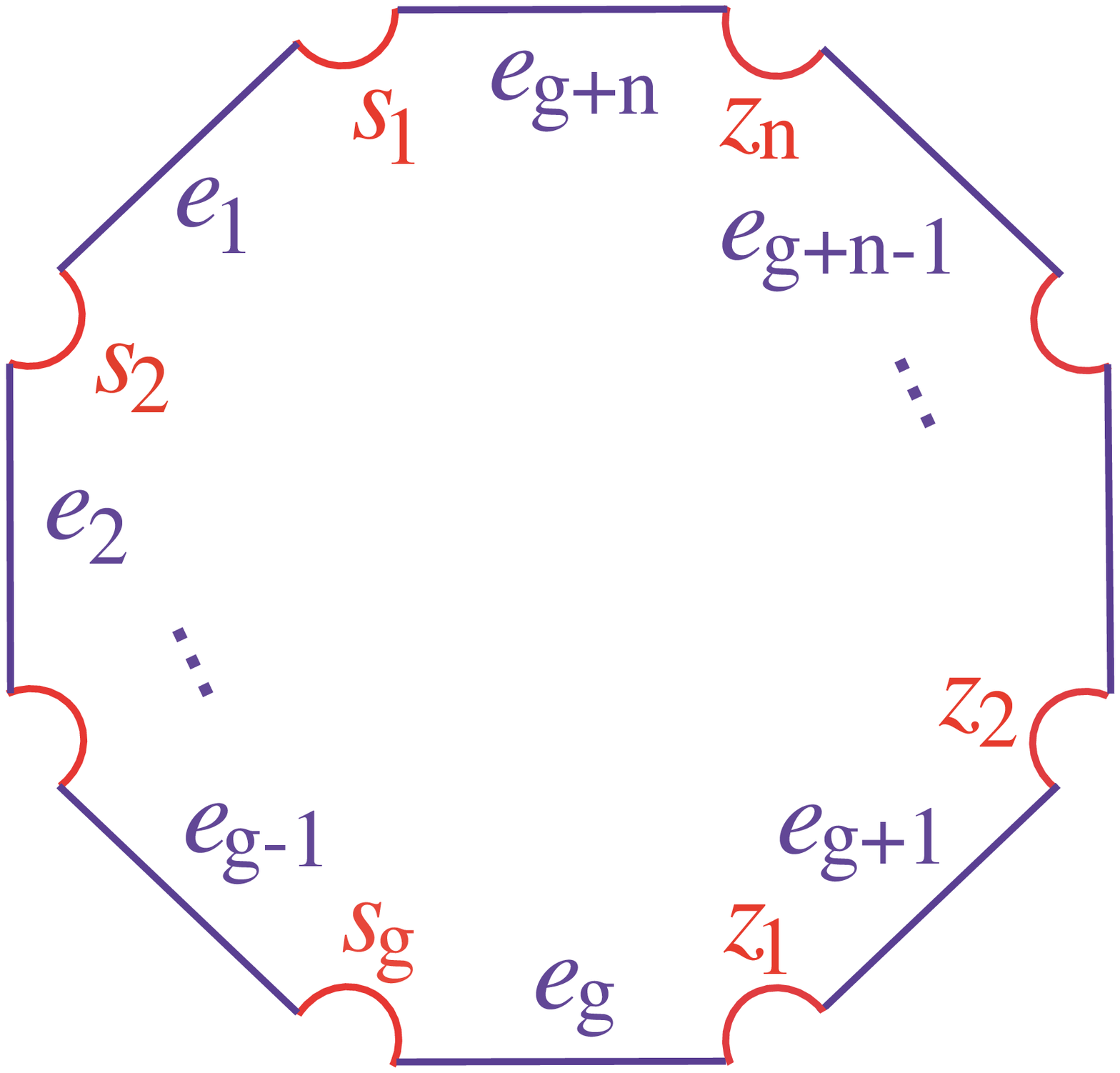}  \hspace{0.1cm}
		 \epsfxsize=2.29in \epsfbox{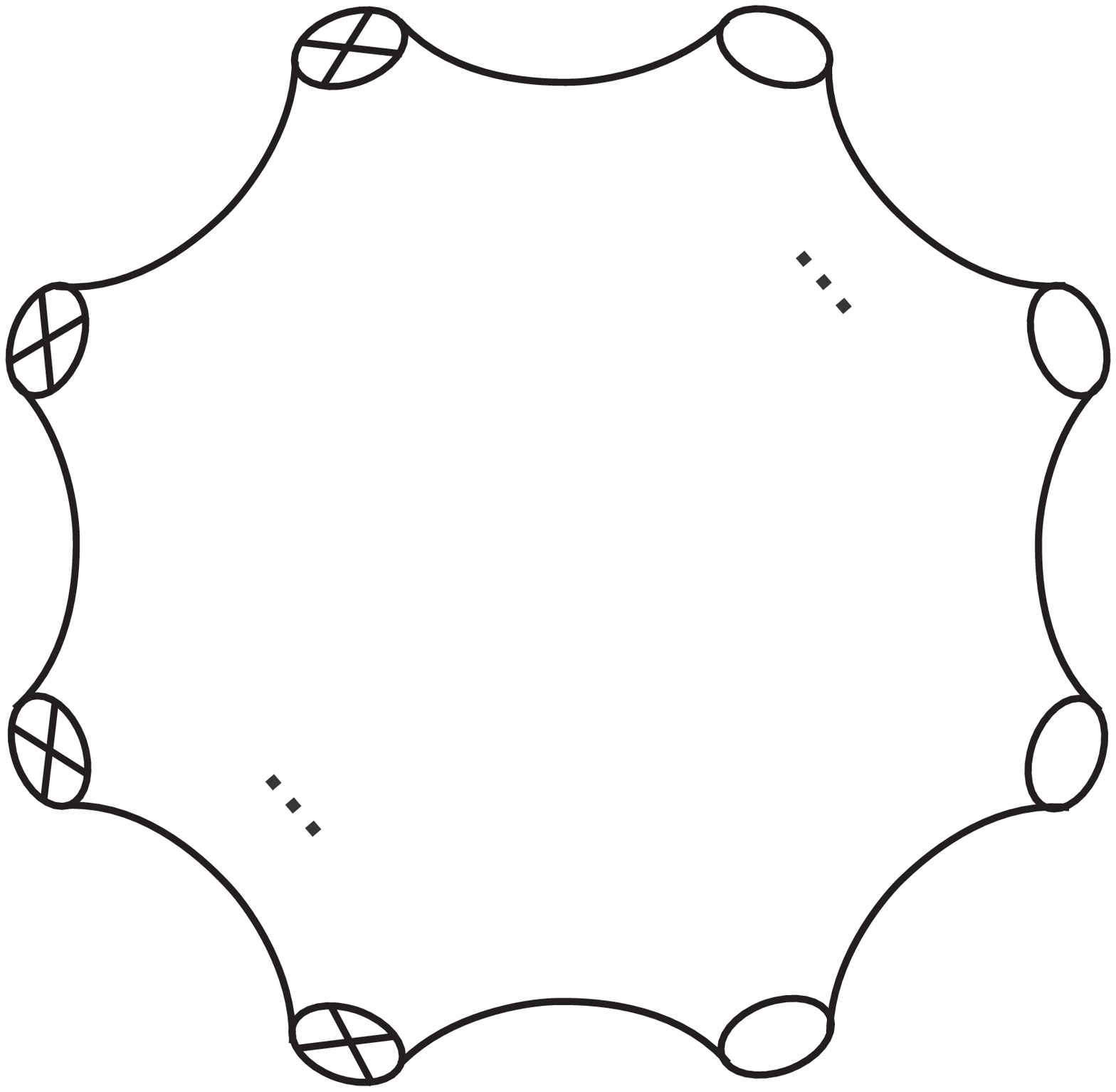} 
 
  \hspace{0.3cm}		(i)   \hspace{5.6cm} (ii)  
		\caption {Nonorientable surface obtained by gluing two $(2g + 2n)$-gons} \label{fig-222}
	\end{center}
\end{figure}

Ilbira-Korkmaz constructed finite rigid subcomplexes in $\mathcal{C}(N)$ in \cite{IlK}. Their construction starts with a disk $D$ in the plane whose boundary is a $(2g + 2n)$-gon with labeled edges $s_1, e_1, s_2, e_2, s_3, e_3, \dots, s_g, e_g, z_1, e_{g+1}, z_2, e_{g+2}, \dots, e_{g+n-1}, z_n, e_{g+n}$, see Figure \ref{fig-222} (i). 
By glueing two copies of this disk $D$ along $e_i$ for each $i$ one gets a sphere $S$ with 
$g+n$ holes. A nonorientable surface $N$ of genus $g$ with $n$ boundary components is obtained by identifying the antipodal points on each boundary component of $S$ that are formed by $s_i$ for each $i$. Let $a_i$ be the 1-sided curve on $N$ that corresponds to the arc $s_i$ on $D$.
Let $a_{i,j}$ be the 1-sided curve on $N$ that corresponds to a line segment joining the midpoint of $s_i$ and a point of $e_j$ on $D$. Let $b_{i,j}$ be the 2-sided curve on $N$ that corresponds to a line segment joining a point of $e_i$ to a point of $e_j$ on $D$.
Let $\mathcal{X} = \{a_i, a_{i,j} :  1 \leq i \leq g, 1 \leq j \leq g + n, j \neq i, j \neq i-1 \ (mod \ (g + n))\} \cup  \{b_{i,j} :  1 \leq i, j \leq g+n, 2 \leq |i-j| \leq g+n-2\}$. 

\begin{theorem} \label{result} (Ilbira-Korkmaz) Let $N$ be a compact, connected, nonorientable surface of genus $g$ with $n$ boundary components. If $g + n \neq 4$, then $\mathcal{X}$ is a finite rigid set in $\mathcal{C}(N)$.\end{theorem}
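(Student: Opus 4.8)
The plan is to take an arbitrary locally injective simplicial map $\lambda\colon\mathcal{X}\to\mathcal{C}(N)$ and to construct a homeomorphism $h$ of $N$ with $[h]_*(\alpha)=\lambda(\alpha)$ for every vertex $\alpha$ of $\mathcal{X}$. I would organize the argument around the decomposition of $\mathcal{X}$ coming from the polygon picture: first pin down the one-sided curves $a_1,\dots,a_g$; then recognize the curves $b_{i,j}$ as forming the Aramayona--Leininger finite rigid set of the orientable surface obtained by cutting $N$ along $a_1\cup\cdots\cup a_g$, and pin them down; then extend the resulting homeomorphism to $N$; and finally pin down the remaining vertices $a_{i,j}$ one at a time.

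For the first step, the $a_i$ are pairwise disjoint and, being locally injective, $\lambda$ is injective on simplices, so $\lambda(a_1),\dots,\lambda(a_g)$ are $g$ distinct, pairwise disjoint vertices of $\mathcal{C}(N)$. Two change-of-coordinates facts about $N$ drive the argument: $g$ is the maximal number of pairwise disjoint one-sided curves, and any $g$ pairwise disjoint one-sided curves whose union has orientable complement lie in a single $Mod_N$-orbit. I would show that each $\lambda(a_i)$ is again one-sided with orientable complement by exhibiting inside $\mathcal{X}$, for each $i$, a small configuration of curves surrounding $a_i$ (assembled from the $a_{i,j}$ and the neighbouring $b_{i,j}$) whose incidence pattern is controlled tightly enough by local injectivity on the stars involved that $\lambda(a_i)$ can be neither two-sided, nor separating, nor a one-sided curve with nonorientable complement. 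Post-composing $\lambda$ with a homeomorphism of $N$, we may then assume $\lambda(a_i)=a_i$ for $1\le i\le g$.

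Cutting $N$ along $a_1\cup\cdots\cup a_g$ produces an orientable sphere with $g+n$ holes, $\Sigma$, and by construction the curves $b_{i,j}$ are precisely the curves of $\Sigma$ making up the Aramayona--Leininger finite rigid set \cite{AL1}; indeed $\mathcal{X}$ is the nonorientable analogue of that set. Since $\lambda$ now fixes each $a_i$, local injectivity on $St(a_i)$ forces $\lambda$ to carry every curve disjoint from $a_1\cup\cdots\cup a_g$ to such a curve, so $\lambda$ restricts to a locally injective simplicial map of the sub-configuration $\{b_{i,j}\}$ inside $\mathcal{C}(\Sigma)$; by rigidity of that set in $\mathcal{C}(\Sigma)$ there is a homeomorphism of $\Sigma$ realizing it, which, being supported away from the $a_i$, extends to a homeomorphism $h$ of $N$ with $[h]_*=\lambda$ on $\{a_i\}\cup\{b_{i,j}\}$ (after a possible final adjustment by the at-most-$\mathbb{Z}_2$ stabilizer of the rigid set). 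The remaining vertices $a_{i,j}$ are then handled as in the proof of Lemma \ref{rigid}: each $a_{i,j}$ is the unique nontrivial simple closed curve, up to isotopy, disjoint from and nonisotopic to a suitable short list of curves on which $[h]_*$ and $\lambda$ already agree, and since $\lambda$ sends that list to a list with the same characterizing property we get $\lambda(a_{i,j})=[h]_*(a_{i,j})$. This exhausts $\mathcal{X}$, so $\mathcal{X}$ is rigid.

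The main obstacle is the weakness of the hypothesis: $\lambda$ is only assumed locally injective, not superinjective. A superinjective map preserves both zero and nonzero geometric intersection, so the preservation of topological type in the first step and the uniqueness arguments in the last step would be essentially automatic; a merely locally injective map controls only behaviour inside a single star and could, a priori, collapse global intersection data or reverse the sidedness of a curve. The real content is therefore to have packed enough local combinatorial rigidity into $\mathcal{X}$ — short cycles in links, and the Farey-type sub-configurations inherited from the orientable pieces — to exclude these pathologies and to verify that the pinning-down propagates to every vertex. This is also exactly why $g+n=4$ must be excluded: there $\Sigma$ is a four-holed sphere, whose curve complex is the Farey graph, and finite subsets of the Farey graph are too flexible for the scheme to close up.
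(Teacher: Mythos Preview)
The paper does not contain a proof of this theorem: it is quoted as a result of Ilbira--Korkmaz, cited to \cite{IlK}, and then used as a black box throughout Section~3. There is therefore no proof in the present paper against which to compare your proposal.

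As a standalone outline your strategy is the natural one --- reduce to Aramayona--Leininger rigidity on the orientable sphere $\Sigma$ with $g+n$ holes obtained by cutting along the $a_i$ --- but two of your steps are genuine gaps rather than routine details. First, forcing each $\lambda(a_i)$ to be one-sided with orientable complement is the heart of the matter, and you have only asserted that a suitable ``small configuration'' in $\mathcal{X}$ does the job; an arbitrary collection of $g$ pairwise disjoint nontrivial curves on $N$ need not consist of one-sided curves, so the specific incidences among the $a_{i,j}$ and $b_{i,j}$ near $a_i$ must be written out explicitly and shown to survive under a merely locally injective map. Second, the passage from a homeomorphism of $\Sigma$ back to one of $N$ is not automatic: the homeomorphism furnished by \cite{AL1} may permute the boundary components of $\Sigma$, and to extend it over the crosscaps you must verify that it respects the partition of $\partial\Sigma$ into crosscap boundaries versus genuine boundary components of $N$; your parenthetical about an ``at-most-$\mathbb{Z}_2$ stabilizer'' does not address this.
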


\begin{figure}[t]
	\begin{center} 
	    \epsfxsize=2.19in \epsfbox{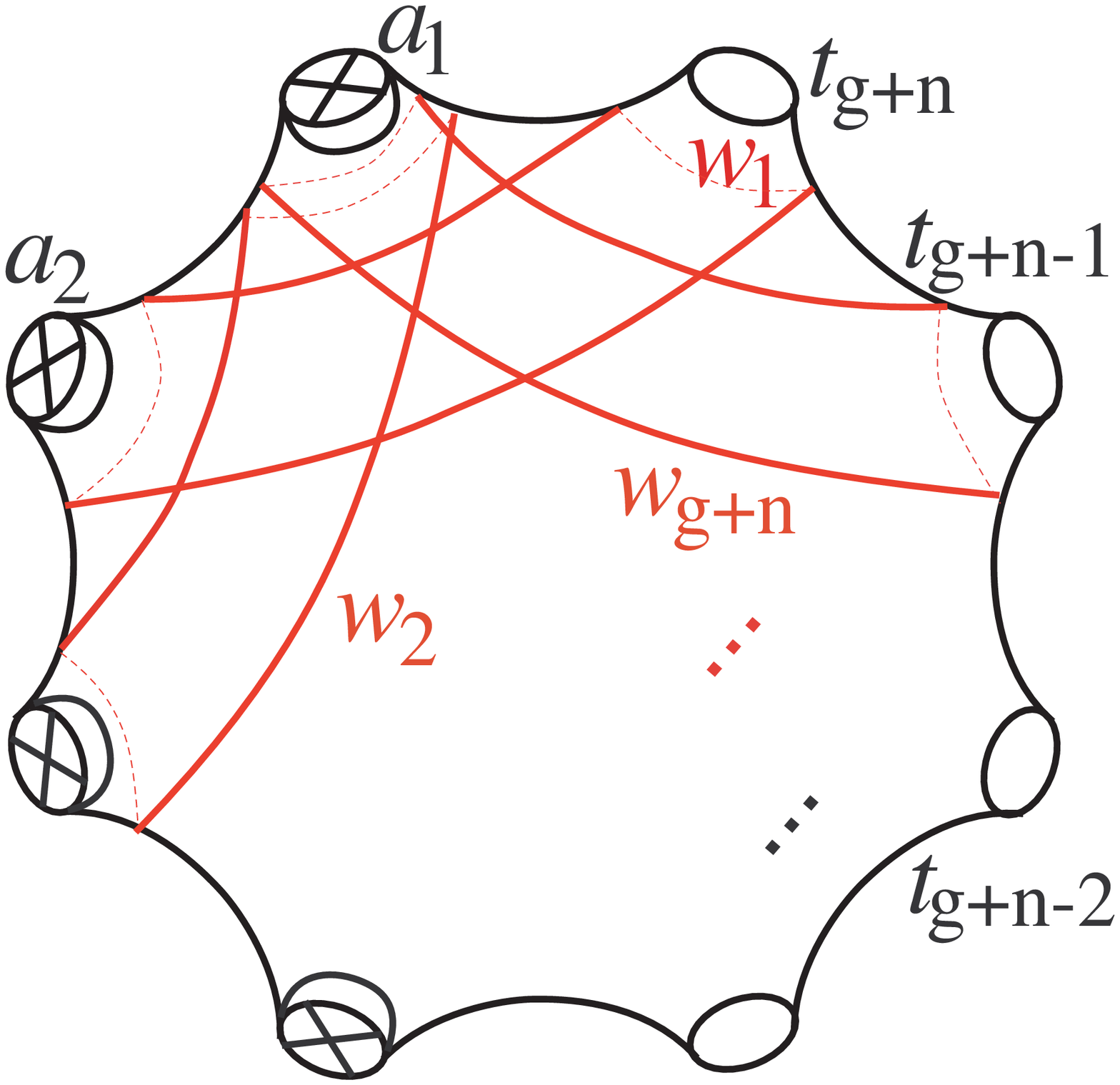} \hspace{0.19cm} 
		\epsfxsize=2.19in \epsfbox{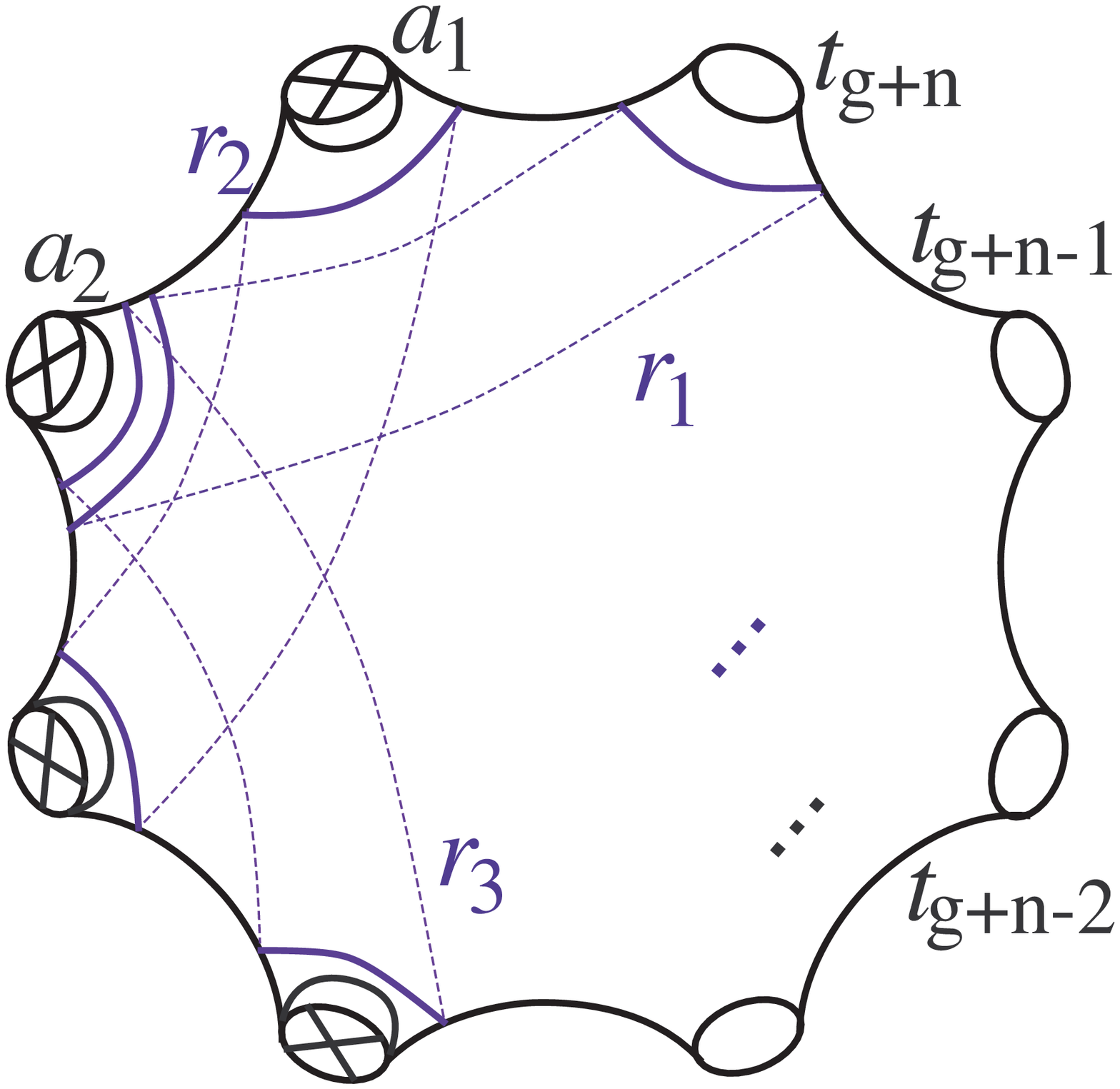}
		
		(i)   \hspace{5.4cm} (ii)  
		
		\epsfxsize=2.19in \epsfbox{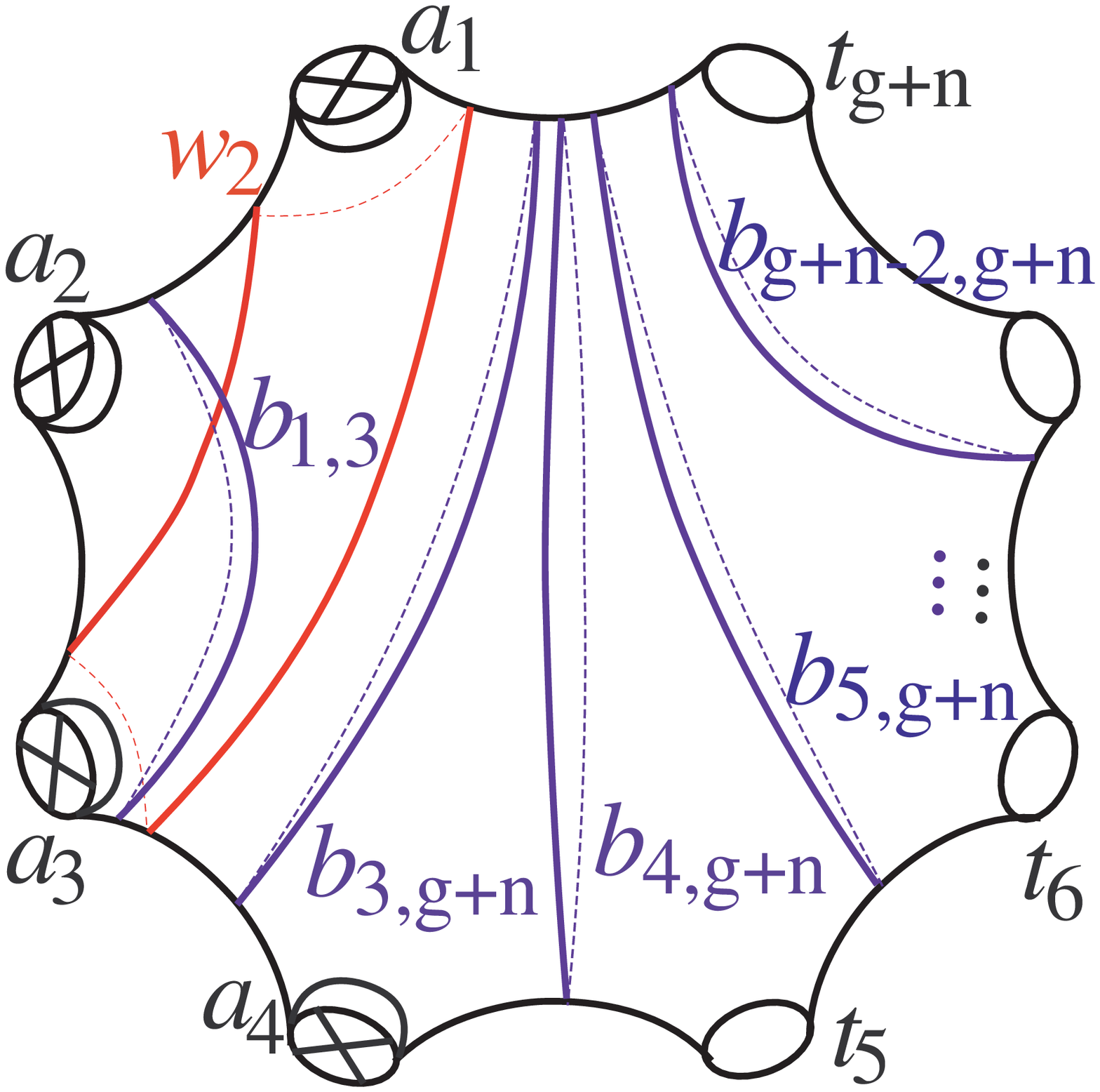} \hspace{0.19cm} 
		\epsfxsize=2.19in \epsfbox{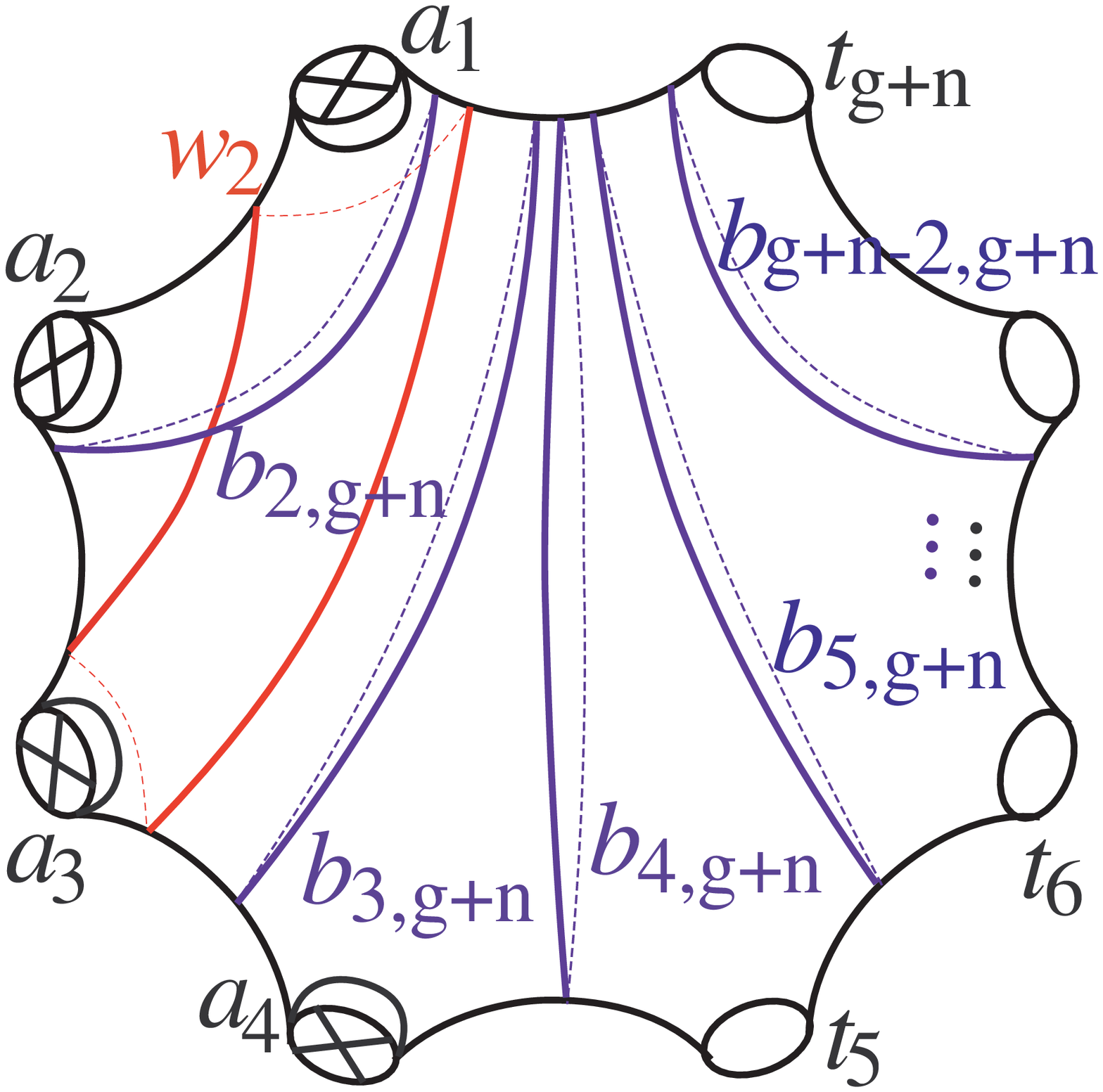}
		
		(iii)   \hspace{5.2cm} (iv)  
		\caption {Curves in $\mathcal{B}_2$ and controlling intersections} \label{fig-s11}
	\end{center}
\end{figure}

From now on we will always assume that $g+n \geq 5$.
Let $\mathcal{B}_1 = \mathcal{X}$. Let $w_i, r_i$ for $i= 1, 2, \dots, g+n$ be as shown in Figure \ref{fig-s11} (i) and (ii). 
Let $\mathcal{B}_2 = \{w_1, w_2,  \dots, w_{g+n}, r_1, r_2, \dots,$ $ r_{g+n}\}$ on $N$.

\begin{lemma} \label{1a} If $\lambda: \mathcal{B}_1 \cup \{w_1, w_2, \dots, w_{g+n}\} \rightarrow \mathcal{C}(N)$ is a locally injective simplicial map, then

$i(\lambda([w_1]), \lambda([b_{1,g+n-1}])) \neq 0$, $i(\lambda([w_1]), \lambda([b_{2,g+n}])) \neq 0$,

$i(\lambda([w_2]), \lambda([b_{1,3}])) \neq 0$, $i(\lambda([w_2]), \lambda([b_{2,g+n}])) \neq 0$,

$i(\lambda([w_3]), \lambda([b_{1,3}])) \neq 0$, $i(\lambda([w_3]), \lambda([b_{2,4}])) \neq 0, \dots,$

$ i(\lambda([w_{g+n}]), \lambda([b_{1,g+n-1}])) \neq 0$, $i(\lambda([w_{g+n}]), \lambda([b_{g+n-2,g+n}])) \neq 0$.\end{lemma}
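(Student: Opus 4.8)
The plan is to leverage the fact that $\lambda$ restricts to a locally injective simplicial map on $\mathcal{B}_1 = \mathcal{X}$, which is a finite rigid set by Theorem \ref{result}; hence there is a homeomorphism $h$ of $N$ with $h([x]) = \lambda([x])$ for all $x \in \mathcal{X}$. Composing $\lambda$ with $h^{-1}$, we may assume $\lambda$ fixes every vertex of $\mathcal{B}_1$, so that each $a_i'$, $a_{i,j}'$, $b_{i,j}'$ can be taken to equal the original curves up to isotopy. The whole problem then reduces to locating $\lambda([w_k])$ well enough to read off its intersection numbers with the two relevant $b_{i,j}$'s. First I would record the edge relations in $\mathcal{B}_1 \cup \{w_1,\dots,w_{g+n}\}$: for each $k$, $w_k$ is disjoint from a specific collection of curves among the $a_i$, $a_{i,j}$, $b_{i,j}$ (this is read off Figure \ref{fig-s11} (i)), and local injectivity forces $\lambda([w_k])$ to be disjoint from the (now unmoved) corresponding curves and distinct from each of them and from its neighbors.

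The key step is then a \emph{rigidity-of-the-link} argument: the subsurface filled by (equivalently, the set of curves disjoint from) the prescribed disjointness family of $w_k$ is small enough that, together with the finitely many non-adjacency constraints, the only isotopy classes satisfying all of them are $[w_k]$ itself and possibly a short explicit list of alternatives. For each such candidate one computes $i(\cdot, [b_{1,g+n-1}])$ (resp. the second listed curve) directly from the polygon picture of Figure \ref{fig-222}, and checks it is nonzero. Concretely, I expect that the curves disjoint from $w_k$'s neighborhood pin $\lambda([w_k])$ to lie in a four-holed-sphere or one-holed-torus/Klein-bottle type piece, inside which the candidates are enumerable; the geometric intersection with the $b$-curves crossing that piece is visibly nonzero for every candidate. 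One does this uniformly in $k$ (the cyclic symmetry of the construction means the $k$-th case is obtained from the first by relabeling indices mod $g+n$), so it suffices to treat $w_1$, $w_2$, and the generic $w_k$ for $3 \le k \le g+n$ and then invoke symmetry. The statement of the lemma is precisely this list of conclusions, one intersection-nonzero assertion per curve $b$ that meets the relevant piece.

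The main obstacle will be the \emph{enumeration of candidates} for $\lambda([w_k])$: unlike superinjective maps, a locally injective map need not preserve disjointness in both directions, so I cannot simply say ``$\lambda([w_k])$ is disjoint from exactly the images of the curves disjoint from $w_k$.'' I only get one-way control — $\lambda$ sends disjoint curves to disjoint curves — so I must argue that the \emph{positive} disjointness data plus the injectivity (non-collapsing) data already determine $\lambda([w_k])$ up to the short list. This is where I would lean on the specific, somewhat rigid local combinatorics of Figure \ref{fig-s11} (i)–(ii): the curves surrounding $w_k$ should cut $N$ into pieces each of which is too simple to contain a curve with the required adjacencies other than the obvious ones. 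Once the candidate list is in hand, the intersection computations are routine and can be done by counting crossings in the two glued $(2g+2n)$-gons, so I would present those as short case checks rather than in full detail.

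Finally, I would remark that because Figure \ref{fig-s11} (iii)–(iv) ("controlling intersections") is cited in the caption, the intended proof presumably exhibits, for each assertion, an explicit third curve or subsurface certifying the nonzero intersection — e.g. a curve $c$ with $i(c,w_k)=0$ but $i(c, b_{i,j})\neq 0$ arranged so that $\lambda([w_k])$ disjoint from $\lambda([c])=c$ is incompatible with $\lambda([w_k])$ disjoint from $b_{i,j}$. I would organize the write-up around producing these certificates, since they make the nonzero-intersection conclusions immediate and avoid a full classification of $\lambda([w_k])$.
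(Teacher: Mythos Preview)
Your normalization step (invoking rigidity of $\mathcal{B}_1$ via Theorem~\ref{result} so that $\lambda$ may be assumed to fix every class in $\mathcal{X}$) matches the paper exactly, but from that point on the paper's argument is much simpler and more direct than either route you sketch. The paper does \emph{not} attempt to classify the possible images $\lambda([w_k])$, and it does not use a single certificate curve. Instead, for each pair $(w_k, b_{i,j})$ listed in the statement it produces a \emph{top-dimensional pants decomposition} $P \subset \mathcal{B}_1$ with $b_{i,j} \in P$ and $i([w_k],[x])=0$ for every $x \in P \setminus \{b_{i,j}\}$. Since $\lambda$ is simplicial, $\lambda([w_k])$ is disjoint from every class in $\lambda(P\setminus\{b_{i,j}\}) = h(P\setminus\{b_{i,j}\})$; and since all of $P$ together with $w_k$ lies in the star of (say) $[a_1]$, local injectivity forces $\lambda([w_k])$ to be distinct from every $\lambda([x])$ with $x \in P$. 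Because $h(P)$ is again a maximal simplex, $\lambda([w_k])$ cannot be disjoint from all of it, and the only curve left for it to meet is $\lambda([b_{i,j}])$.

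Your single-curve certificate idea in the last paragraph does not work as stated: knowing that $\lambda([w_k])$ is disjoint from some $c$ with $i(c,b_{i,j})\neq 0$ in no way prevents $\lambda([w_k])$ from also being disjoint from $b_{i,j}$, since there are many curves disjoint from two given intersecting curves. What makes the argument go through is precisely the \emph{maximality} of $P$, which leaves no room for an additional disjoint, distinct curve. Your enumeration-of-candidates approach might in principle be made to work, but you have not carried it out, it would be far more laborious, and the pants-decomposition trick bypasses it entirely. The figures you noticed (\ref{fig-s11} (iii)--(iv)) are in fact depicting such pants decompositions, not single certificate curves.
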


\begin{proof} We will give the proof when $g=4$ and $n \geq 1$. The proof for the other cases is similar. By Theorem \ref{result}, there exists a homeomorphism $h: N \rightarrow N$ such that $h([x]) = \lambda([x])$ for every $x$ in $\mathcal{B}_1$. Consider the curves given in Figure \ref{fig-s11} (iii) and (iv). Let $P=\{a_1, a_2, a_3, a_4, b_{1,3}, b_{3,g+n}, b_{4,g+n}, b_{5,g+n}, \dots, b_{g+n-2,g+n}\}$. The set $P$ is a top dimensional pants decomposition on $N$. We see that $i([w_2], [x]) = 0$ for all $x \in P \setminus \{b_{1,3}\}$. Since $\lambda$ is a locally injective simplicial map and it preserves geometric intersection zero, we have $i(\lambda([w_2]), \lambda([x])) = 0$ for all $x \in P \setminus \{b_{1,3}\}$. This gives $i(\lambda([w_2]), \lambda([b_{1,3}])) \neq 0$. Let $Q=\{a_1, a_2, a_3, a_4, b_{2,8}, b_{3,g+n}, b_{4,g+n}, b_{5,g+n}, \dots, b_{g+n-2,g+n}\}$. The set $Q$ is a top dimensional pants decomposition on $N$. We see that $i([w_2], [x]) = 0$ for all $x \in Q \setminus \{b_{2,g+n}\}$. We have $i(\lambda([w_2]), \lambda([x])) = 0$ for all $x \in Q \setminus \{b_{2,g+n}\}$. This gives $i(\lambda([w_2]), \lambda([b_{2,g+n}])) \neq 0$. With similar arguments we get all the intersection information listed in the statement.\end{proof} 

\begin{lemma} \label{1b} If $\lambda: \mathcal{B}_1 \cup \{r_1, r_2, \dots, r_{g+n}\} \rightarrow \mathcal{C}(N)$ is a locally injective simplicial map, then

$i(\lambda([r_1]), \lambda([b_{1,g+n-1}])) \neq 0$, $i(\lambda([r_1]), \lambda([b_{2,g+n}])) \neq 0$,

$i(\lambda([r_2]), \lambda([b_{1,3}])) \neq 0$, $i(\lambda([r_2]), \lambda([b_{2,g+n}])) \neq 0$,

$i(\lambda([r_3]), \lambda([b_{1,3}])) \neq 0$, $i(\lambda([r_3]), \lambda([b_{2,4}])) \neq 0, \dots,$  

$i(\lambda([r_{g+n}]), \lambda([b_{1,g+n-1}])) \neq 0$, $i(\lambda([r_{g+n}]), \lambda([b_{g+n-2,g+n}])) \neq 0$. \end{lemma}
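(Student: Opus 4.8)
The plan is to mirror the proof of Lemma \ref{1a} essentially verbatim, replacing each curve $w_i$ by the corresponding curve $r_i$. First I would invoke Theorem \ref{result} to obtain a homeomorphism $h \colon N \to N$ with $h([x]) = \lambda([x])$ for every $x \in \mathcal{B}_1$, so that the images of all the $a_i$, $a_{i,j}$, and $b_{i,j}$ are pinned down; the only vertices whose images remain to be controlled are the $r_i$. As in Lemma \ref{1a} I would treat the representative case $g = 4$, $n \geq 1$, the other values of $g$ being completely analogous, and I would read off from Figure \ref{fig-s11} (i), (iii), (iv) that each $r_i$ is disjoint from all but one curve of a suitable top-dimensional pants decomposition of $N$ built out of curves in $\mathcal{B}_1$.

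The key step is that a locally injective simplicial map preserves geometric intersection number zero (it sends disjoint curves to disjoint curves, since it is simplicial) and, crucially, that it \emph{cannot collapse} a top-dimensional pants decomposition: if $P$ is a pants decomposition realizing a top-dimensional simplex of $\mathcal{C}(N)$ and $r_i$ is disjoint from every element of $P$ except one distinguished curve $b$, then $\lambda([r_i])$ is disjoint from $\lambda([x])$ for all $x \in P \setminus \{b\}$. If in addition $i(\lambda([r_i]), \lambda([b])) = 0$ were to hold, then $\lambda([r_i])$ would be disjoint from every curve of $h(P)$, forcing $\lambda([r_i])$ to be isotopic to one of the curves of $h(P)$ (since $h(P)$ is a top-dimensional pants decomposition, its complementary regions contain no further nontrivial curves). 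But then $\lambda([r_i])$ would equal $\lambda([x])$ for some $x \in P$, and since $r_i$ shares a star-vertex configuration with the curves of $P \setminus \{b\}$ in $\mathcal{B}_1 \cup \{r_i\}$, local injectivity is contradicted --- exactly the mechanism used in Lemma \ref{1a}. Concretely, for $\lambda([r_2])$ one would use $P = \{a_1, a_2, a_3, a_4, b_{1,3}, b_{3,g+n}, \dots, b_{g+n-2,g+n}\}$ to get $i(\lambda([r_2]), \lambda([b_{1,3}])) \neq 0$, and $Q = \{a_1, a_2, a_3, a_4, b_{2,g+n}, b_{3,g+n}, \dots, b_{g+n-2,g+n}\}$ to get $i(\lambda([r_2]), \lambda([b_{2,g+n}])) \neq 0$; the remaining indices $r_1, r_3, \dots, r_{g+n}$ are handled by the evident cyclic variants of these pants decompositions.

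The main obstacle --- and the only place where a genuine check is needed rather than a copy of Lemma \ref{1a} --- is verifying from the figures that for each index $i$ the curve $r_i$ really does have the stated intersection pattern with the members of the two relevant pants decompositions, i.e. that $r_i$ is disjoint from all of $P \setminus \{b_{1,g+n-1} \text{ or } b_{2,4} \text{ etc.}\}$ and genuinely crosses the distinguished curve, and that the two pants decompositions used are indeed top-dimensional (so that no nontrivial curve hides in a complementary pair of pants or Möbius-plus-hole piece). This is routine but figure-dependent; since the $r_i$ are drawn in Figure \ref{fig-s11} (ii) to have the same combinatorial position relative to $\mathcal{B}_1$ as the $w_i$ in Figure \ref{fig-s11} (i), the verification is identical in structure to that of Lemma \ref{1a}. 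I would therefore conclude the proof with the sentence: ``With the same arguments as in the proof of Lemma \ref{1a}, using the curves in Figure \ref{fig-s11} (ii), (iii), (iv), we obtain all the intersection information listed in the statement.''
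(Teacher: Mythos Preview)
Your proposal is correct and follows exactly the approach the paper takes: the paper's entire proof of this lemma is the single sentence ``The proof is similar to the proof of Lemma \ref{1a}.'' Your expansion of that sentence---invoking Theorem \ref{result}, choosing the same top-dimensional pants decompositions $P$ and $Q$ with $r_i$ in place of $w_i$, and using local injectivity on a common star to rule out $\lambda([r_i])$ coinciding with any $\lambda([x])$ for $x \in P$---is precisely what the author intends.
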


\begin{proof} The proof is similar to the proof of Lemma \ref{1a}. \end{proof}\\

\begin{figure}[t]
	\begin{center} 
		\epsfxsize=2.19in \epsfbox{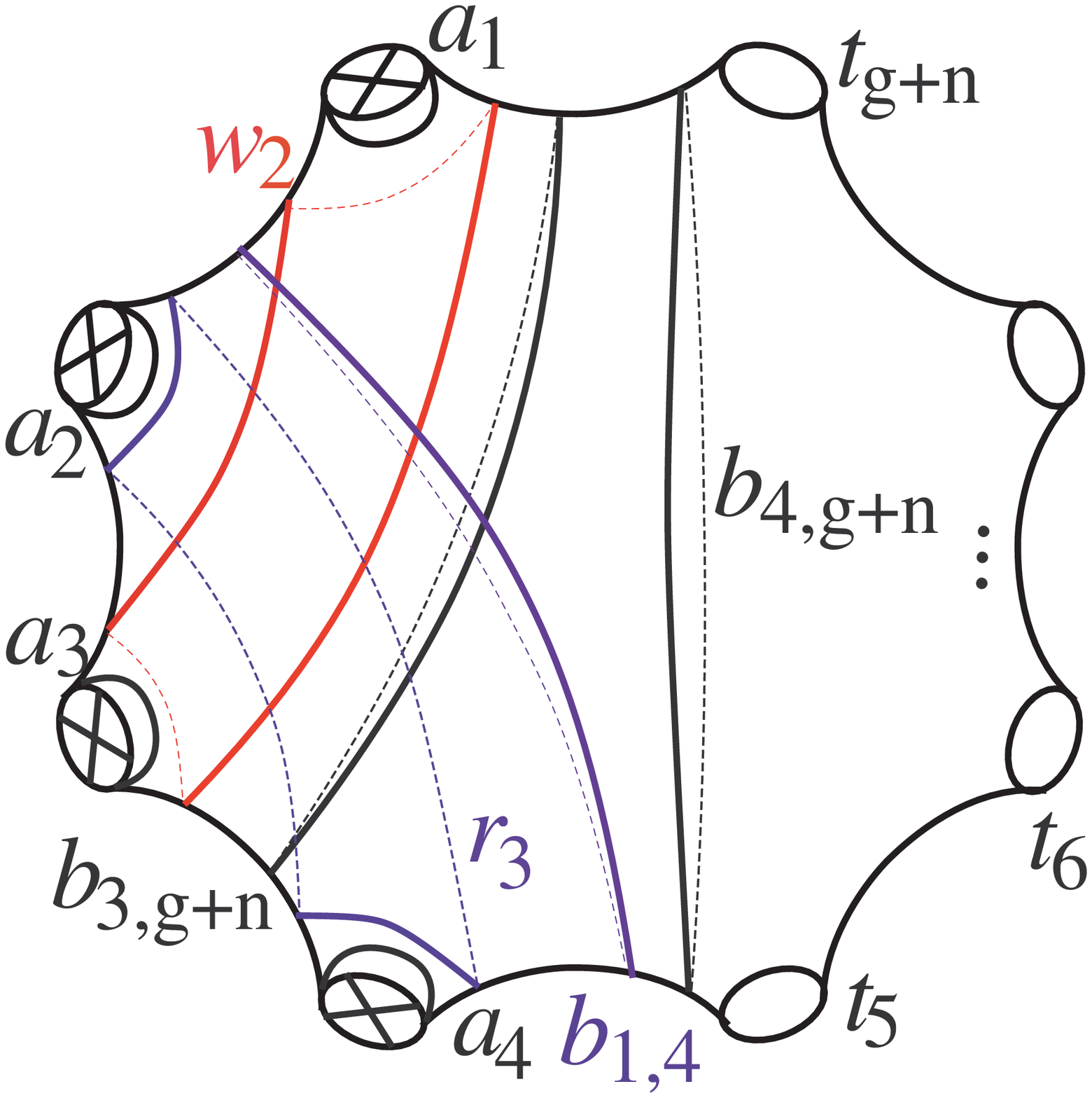} \hspace{0.19cm} 
		\epsfxsize=2.19in \epsfbox{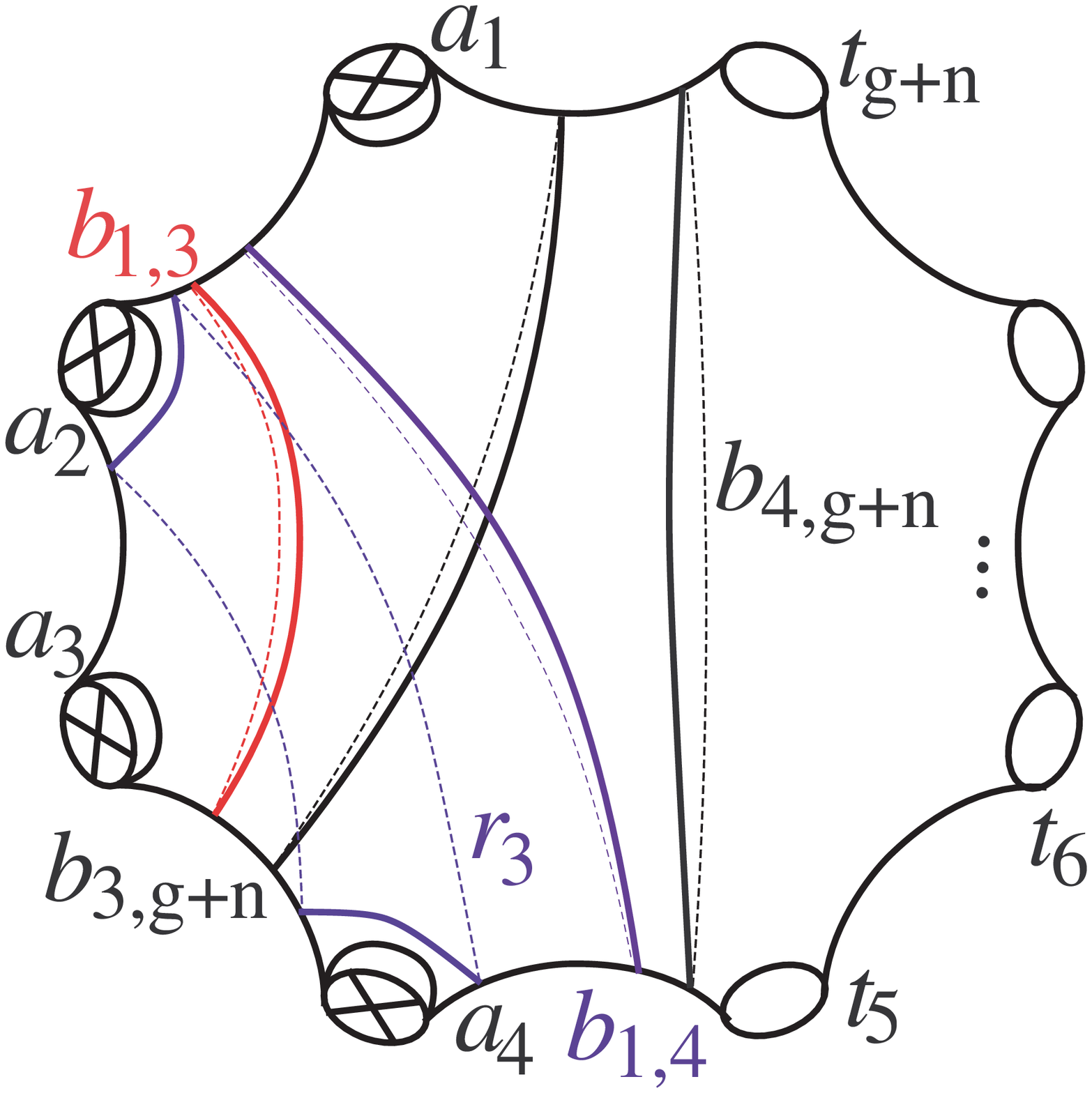}
		
\hspace{0.1cm}		(i)   \hspace{5.4cm} (ii)  
		
		\epsfxsize=2.19in \epsfbox{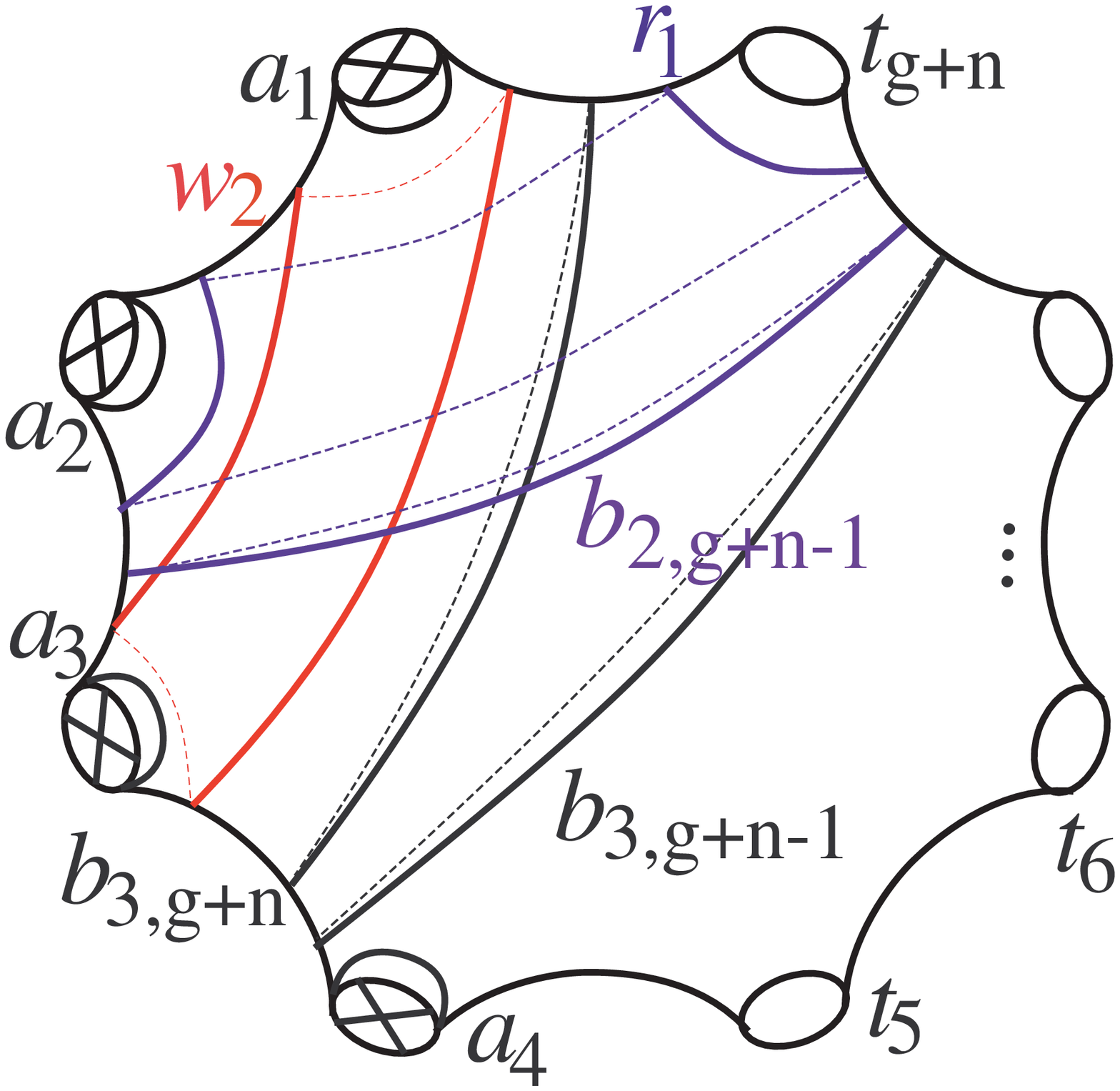} \hspace{0.19cm} 
		\epsfxsize=2.19in \epsfbox{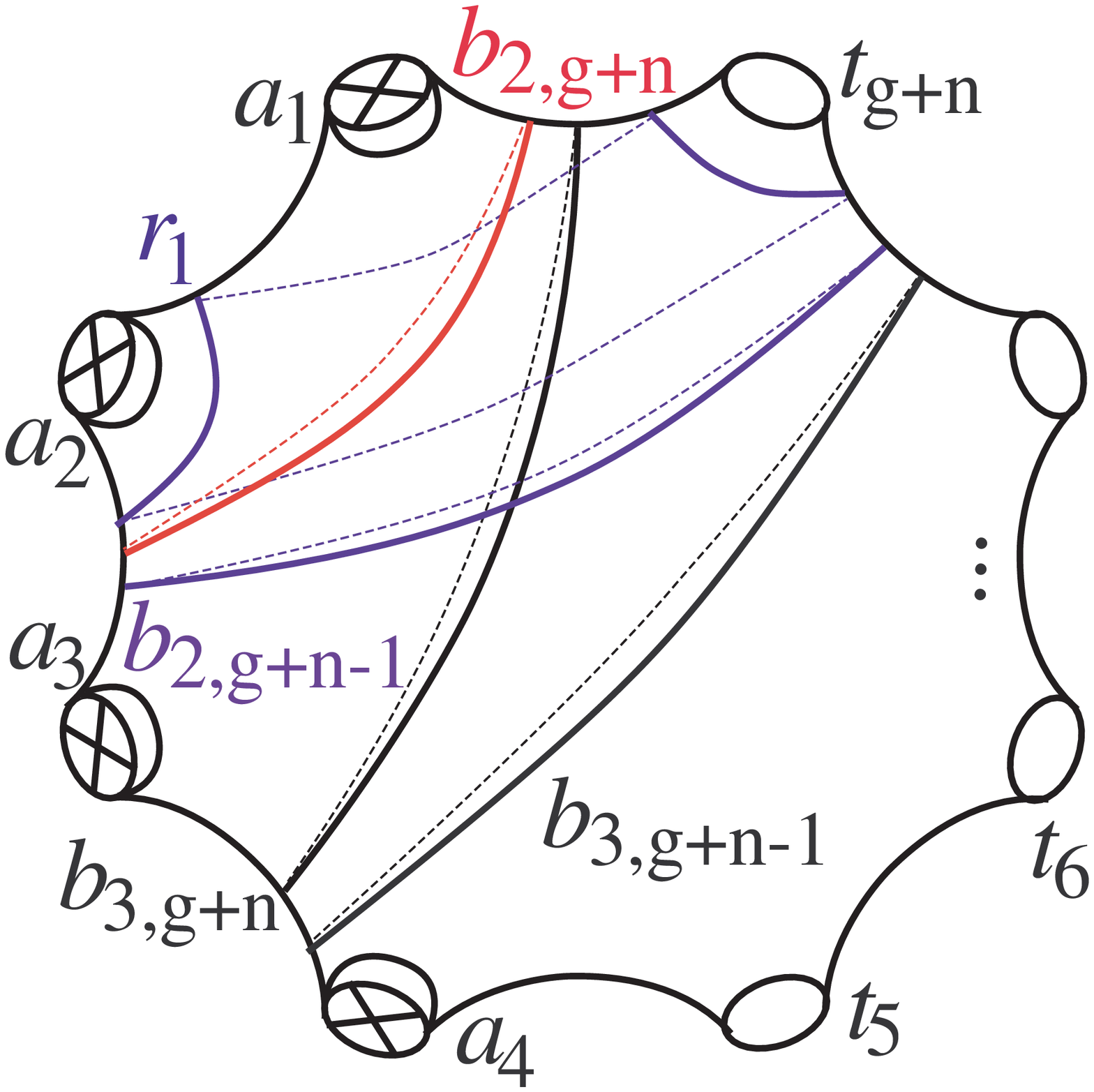}
		
	\hspace{0.2cm}	(iii)   \hspace{5.2cm} (iv)  
		\caption {Controlling geometric intersection two} \label{fig-2a}
	\end{center}
\end{figure}

\begin{figure}[t]
	\begin{center} 
		\epsfxsize=2.49in \epsfbox{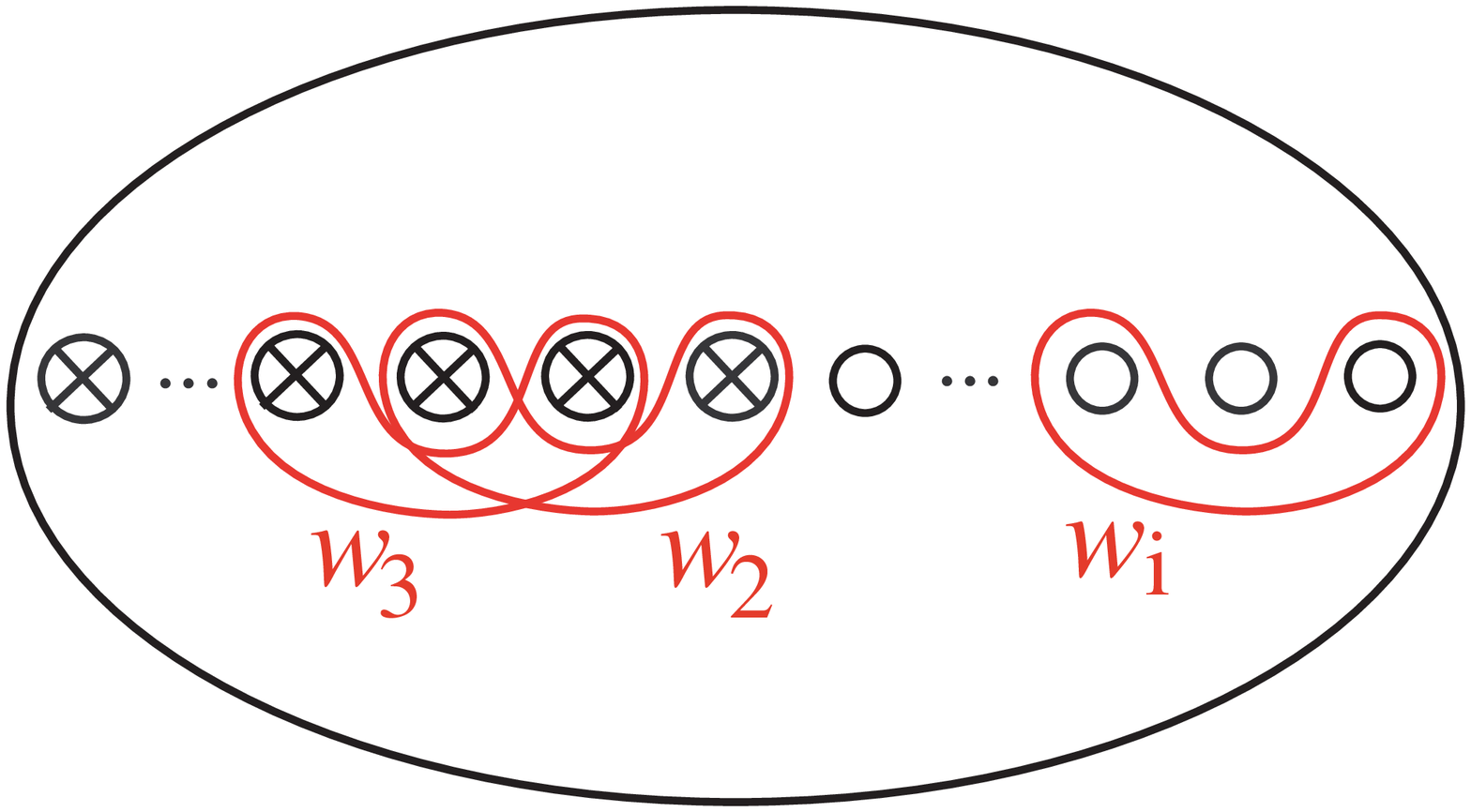} \hspace{0.09cm} 	
		\epsfxsize=2.49in \epsfbox{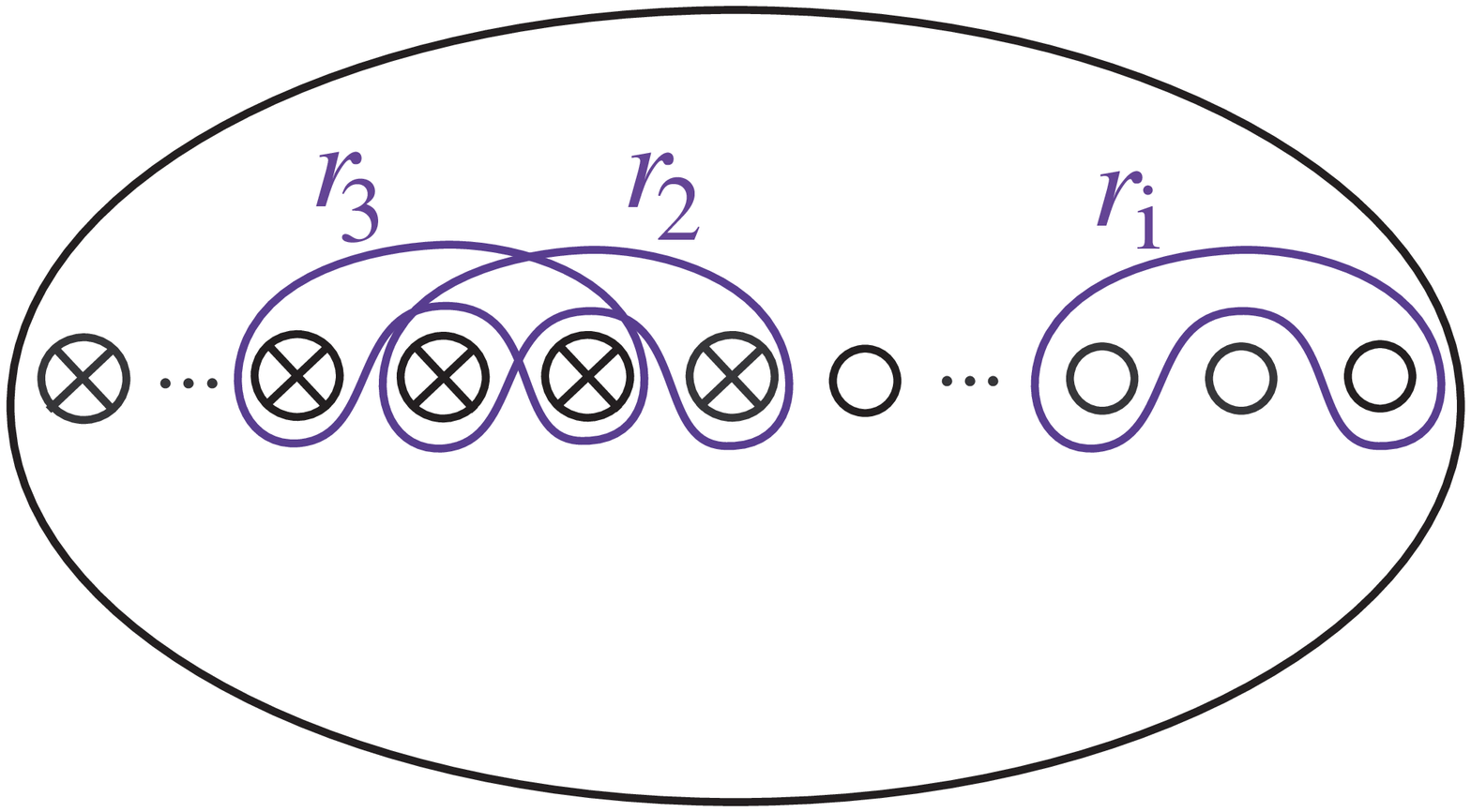} 
		
		\caption{Curves in $\mathcal{B}_2$ }
		\label{Fig-2-a}
	\end{center}
\end{figure}

In the proof of the following lemma we will be using pentagons in $\mathcal{C}(N)$. We remind that five vertices $v_1, v_2, \dots, v_5$ in $\mathcal{C}(N)$ form a pentagon if there are edges between $v_i$ and $v_{i+1}$ for $i=1, 2, 3, 4$ and there is an edge between $v_5$ and $v_1$ and there are no other edges between these five vertives in $\mathcal{C}(N)$.  

\begin{lemma} \label{1c} If $\lambda: \mathcal{B}_1 \cup \{w_1, w_2,  \dots, w_{g+n}, r_1, r_2, \dots, r_{g+n}\} \rightarrow \mathcal{C}(N)$ is a locally injective simplicial map, then we have the following geometric intersections
	
$i(\lambda([w_1]), \lambda([b_{1,g+n-1}])) =2$, $i(\lambda([w_1]), \lambda([b_{2,g+n}])) =2$,

$i(\lambda([w_2]), \lambda([b_{1,3}])) =2$, 
$i(\lambda([w_2]), \lambda([b_{2,g+n}])) =2$, 

$i(\lambda([w_3]), \lambda([b_{1,3}])) =2$, 
$i(\lambda([w_3]), \lambda([b_{2,4}])) =2, \dots,$ 

$i(\lambda([w_{g+n}]), \lambda([b_{1,g+n-1}])) =2$, $i(\lambda([w_{g+n}]), \lambda([b_{g+n-2,g+n}])) =2$.   \end{lemma}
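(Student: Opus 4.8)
The plan is to pin $\lambda([w_i])$ inside a five-holed sphere subsurface of $N$, inside which the geometric intersection number of any two distinct curves is automatically $0$ or $2$. By Theorem \ref{result}, $\mathcal{B}_1=\mathcal{X}$ is rigid, so after postcomposing $\lambda$ with a homeomorphism we may assume that $\lambda$ fixes every vertex of $\mathcal{B}_1$; in particular $\lambda([b_{s,t}])=[b_{s,t}]$ for every admissible pair $(s,t)$. The point of Lemmas \ref{1a} and \ref{1b} is then precisely that, for the two curves $b,b'$ of the form $b_{s,t}$ that occur with a given $w_i$ in the statement, $\lambda([w_i])$ is \emph{not} adjacent to $[b]$ or to $[b']$ in $\mathcal{C}(N)$, i.e.\ these intersection numbers are not collapsed to zero. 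Together with the facts that $\lambda$ is simplicial and locally injective (so it sends edges to edges and never identifies two vertices lying in a common simplex), this is what will let me transport combinatorial configurations to the image.

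For each index $i$ I would read off from Figures \ref{fig-s11}, \ref{fig-2a} and \ref{Fig-2-a} a five-holed sphere $Y_i\subset N$ together with an induced pentagon $\Pi_i=(w_i,\delta_i,b,b',\delta'_i)$ in $\mathcal{C}(Y_i)$ (hence also in $\mathcal{C}(N)$) with the following features: $\partial Y_i$ and the two auxiliary curves $\delta_i,\delta'_i$ all lie in $\mathcal{B}_1$; the curves $b,b'$ are the two curves $b_{s,t}$ attached to $w_i$ in the statement; and $w_i$ is the vertex of $\Pi_i$ at distance two from both $b$ and $b'$, so that the two non-edges of $\Pi_i$ through $w_i$ are exactly $\{w_i,b\}$ and $\{w_i,b'\}$. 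Recall that the curve complex of a five-holed sphere is the Petersen graph: it has girth five, any two of its distinct vertices are disjoint or meet in exactly two points, and four of the five vertices of a pentagon determine the fifth.

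Now push $\Pi_i$ through $\lambda$. The four vertices $\delta_i,b,b',\delta'_i$ are fixed; $\lambda$ carries the four edges of $\Pi_i$ incident to $\delta_i$ or $\delta'_i$, together with the edge $\{b,b'\}$, to edges; by Lemmas \ref{1a} and \ref{1b} the non-edges $\{w_i,b\}$ and $\{w_i,b'\}$ go to non-edges; the two remaining non-edges of $\Pi_i$ lie among the fixed vertices and are preserved trivially; and local injectivity forces $\lambda([w_i])$ distinct from each of $\delta_i,b,b',\delta'_i$. Hence $\{\lambda([w_i]),\delta_i,b,b',\delta'_i\}$ is again an induced pentagon. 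The key step is that $\lambda([w_i])$ is supported in $Y_i$: it is disjoint from $\partial Y_i$ (each boundary curve of $Y_i$ lies in $\mathcal{B}_1$, hence is fixed, and is disjoint from $w_i$), so it is carried by $N\setminus\partial Y_i$; it meets $[b]\subset Y_i$ nontrivially by Lemma \ref{1a}, so it cannot be carried by a component of $N\setminus\partial Y_i$ other than $Y_i$; and by local injectivity it is not isotopic to any boundary component of $Y_i$. So $\lambda([w_i])$ is a vertex of $\mathcal{C}(Y_i)$, and being the fifth vertex of the pentagon $\Pi_i$ whose other four vertices are fixed, it must equal $[w_i]$. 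In particular $i(\lambda([w_i]),\lambda([b]))=i(w_i,b)=2$, and the same reasoning gives $i(\lambda([w_i]),\lambda([b']))=2$. Running this argument for every index $i$ yields all the equalities in the statement.

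The step I expect to be the main obstacle is the one compressed into the second paragraph: producing the subsurfaces $Y_i$ and pentagons $\Pi_i$ \emph{uniformly} for all indices $i\in\{1,\dots,g+n\}$ and all values $g+n\geq 5$, and verifying case by case from the figures that $\partial Y_i$ and the auxiliary curves $\delta_i,\delta'_i$ genuinely belong to $\mathcal{X}$ and that each $\Pi_i$ is an induced pentagon. A subtlety specific to non-orientability is that for some indices the natural subsurface spanned by $w_i$ and its two associated $b$-curves may fail to be an orientable five-holed sphere; in that case one needs the analogue of the Petersen-graph facts used above --- in particular that two distinct curves meet in at most two points --- for the curve complex of the relevant small non-orientable surface, which must be checked directly, for instance from Scharlemann's descriptions in \cite{Sch}.
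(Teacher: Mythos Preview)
There is a genuine gap. The pentagon $\Pi_i=(w_i,\delta_i,b,b',\delta'_i)$ you describe cannot exist: the two curves $b,b'$ attached to $w_i$ in the statement always intersect each other. For instance, for $w_2$ the curves are $b_{1,3}$ and $b_{2,g+n}$, which correspond to crossing chords $e_1e_3$ and $e_2e_{g+n}$ of the model polygon, so $i(b_{1,3},b_{2,g+n})\neq 0$ and they are not adjacent in $\mathcal{C}(N)$. They therefore cannot be consecutive vertices of an induced pentagon, and your configuration collapses.

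The deeper issue is that your scheme of fixing four vertices of a pentagon in $\mathcal{B}_1$ and reading off the fifth cannot be made to work here. In the five-holed sphere $S$ the paper uses (bounded by $a_1,a_2,a_3,a_4,b_{4,g+n}$), exactly five of the ten curves of $\mathcal{C}(S)$ lie in $\mathcal{B}_1$, namely $b_{2,g+n},b_{1,3},b_{2,4},b_{1,4},b_{3,g+n}$, and these form the \emph{outer} $5$-cycle of the Petersen graph; the curve $w_2$ lies on the inner $5$-cycle and has only \emph{one} neighbour, $b_{3,g+n}$, among them. So no pentagon through $w_2$ with all four remaining vertices in $\mathcal{B}_1$ exists in $S$. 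This is exactly why the hypothesis of the lemma includes the $r_j$'s and why Lemma~\ref{1b} is stated: the paper's pentagon is $(b_{3,g+n},b_{1,3},b_{1,4},r_3,w_2)$, with $r_3\notin\mathcal{B}_1$. Consequently one cannot conclude $\lambda([w_2])=[w_2]$ directly; instead the paper verifies, using Lemmas~\ref{1a} and~\ref{1b} together with further pants-decomposition arguments for the remaining non-edges (such as $\{r_3,b_{1,3}\}$ and $\{w_2,b_{1,4}\}$), that the five \emph{images} again form an induced pentagon in the image five-holed sphere, and then invokes Korkmaz's theorem on pentagons in $\mathcal{C}(\Sigma_{0,5})$ to get intersection number~$2$. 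Each equality in the statement is obtained from its own pentagon in this way.
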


\begin{proof} We will give the proof for $g=4, n \geq 1$. The other cases are similar. The map $\lambda$ restricts to locally injective simplicial maps on $\mathcal{B}_1$ and $\mathcal{B}_1 \cup \{w_1, w_2, \dots, w_{g+n}\}$. By Theorem \ref{result}, there exists a homeomorphism $h$ of $N$ such that $h([x]) = \lambda([x])$ for every $x$ in $\mathcal{B}_1$. To see that $i(\lambda([w_1]), \lambda([b_{1,3}])) =2$ consider the curves in Figure \ref{fig-2a} (i) and (ii). Let $S$ be the five holed sphere bounded by $a_1, a_2, a_3, a_4, b_{4,g+n}$. There is a pentagon formed by 
$b_{3, g+n}, b_{1,3}, b_{1,4}, r_3, w_2$ in $C(S)$. If $x, y \in \{b_{3, g+n}, b_{1,3}, b_{1,4}, r_3, w_2\}$ and $i([x], [y]) = 0$ then we can see that $i(\lambda([x]), \lambda([y])) = 0$ since $\lambda$ is locally injective. Let $x, y \in \{b_{3, g+n}, b_{1,3}, b_{1,4}, r_3, w_2\}$ such that $x$ and $y$ intersect. If both of $x, y$ are in $\mathcal{B}_1$ 
then we know that $i(\lambda([x]), \lambda([y])) \neq 0$ since there exists a homeomorphism $h: N \rightarrow N$ such that $h([x]) = \lambda([x])$ for every $x$ in $\mathcal{B}_1$. If both of $x, y$ are not in $\mathcal{B}_1$ we can still see that $i(\lambda([x]), \lambda([y])) \neq 0$ as follows: If Lemma \ref{1a} applies to the pair (for example it applies when $x= w_2, y=b_{1,3}$) we use the lemma to see the result. If Lemma \ref{1a} doesn't apply to the pair $x, y$ we use a proof similar to the proof of that lemma to see the result: We can complete $x$ to a top dimensional pants decomposition on $N$ using the curves in  $\mathcal{B}_1 \cup \{w_1, w_2,  \dots, w_{g+n}, r_1, r_2, \dots, r_{g+n}\}$
such that $i([y], [z]) = 0$ for all $z \in P \setminus \{x\}$. Since $\lambda$ is a locally injective simplicial map and it preserves geometric intersection zero, we will have $i(\lambda([y]), \lambda([z])) = 0$ for all $z \in P \setminus \{x\}$. This implies that $i(\lambda([x]), \lambda([y])) \neq 0$. This argument shows that minimally intersecting representatives of $\lambda([b_{3, g+n}]), \lambda([b_{1,3}]), \lambda([b_{1,4}]), \lambda([r_3]), \lambda([w_2])$ form a pentagon in the five holed sphere bounded by $a'_1, a'_2, a'_3, a'_4, b'_{4,g+n}$, where $a'_1, a'_2, a'_3, a'_4, b'_{4,g+n}$ are minimally intersecting representatives of $\lambda([a_1]),  \lambda([a_2]),$ $\lambda([a_3]),\lambda([a_4]), \lambda([b_{4,g+n}])$ respectively. This implies that $i(\lambda[w_2]), \lambda([b_{1,3}])) =2$ by Korkmaz's Theorem 3.2 in \cite{K1}. The other intersections can be obtained in a similar way.\end{proof}

\begin{lemma} \label{1d} If $\lambda: \mathcal{B}_1 \cup \{w_1, w_2,  \dots, w_{g+n}, r_1, r_2, \dots, r_{g+n}\} \rightarrow \mathcal{C}(N)$ is a locally injective simplicial map, then we have the following geometric intersections
	
$i(\lambda([r_1]), \lambda([b_{1,g+n-1}])) =2$, $i(\lambda([r_1]), \lambda([b_{2,g+n}])) =2$,

$i(\lambda([r_2]), \lambda([b_{1,3}])) =2$, 
$i(\lambda([r_2]), \lambda([b_{2,g+n}])) =2$, 

$i(\lambda([r_3]), \lambda([b_{1,3}])) =2$, 
$i(\lambda([r_3]), \lambda([b_{2,4}])) =2, \dots,$  

$i(\lambda([r_{g+n}]), \lambda([b_{1,g+n-1}])) =2$, $i(\lambda([r_{g+n}]), \lambda([b_{g+n-2,g+n}])) =2$.   \end{lemma}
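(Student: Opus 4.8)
The plan is to follow the proof of Lemma \ref{1c} essentially verbatim, with the $r_i$ playing the role of the $w_i$ and Lemma \ref{1b} playing the role of Lemma \ref{1a}. First I would invoke Theorem \ref{result} to fix a homeomorphism $h$ of $N$ with $h([x]) = \lambda([x])$ for every $x \in \mathcal{B}_1$, and observe that $\lambda$ restricts to locally injective simplicial maps on $\mathcal{B}_1$, on $\mathcal{B}_1 \cup \{r_1,\dots,r_{g+n}\}$, and on the full domain. Then, for each pair $(r_i,b_{j,k})$ listed in the statement, I would exhibit a five-holed sphere $S \subset N$ cut off by five curves of $\mathcal{B}_1$ (four of the $a_\ell$ together with one $b_{\ell,m}$, exactly as in the proof of Lemma \ref{1c}) such that $r_i$ and $b_{j,k}$, together with three further curves drawn from $\mathcal{B}_1 \cup \{w_1,\dots,w_{g+n}\}$, form a pentagon in $\mathcal{C}(S)$; these configurations are the ones recorded in Figures \ref{fig-2a} and \ref{Fig-2-a}, and each one is the mirror image (under the symmetry of the Ilbira--Korkmaz construction exchanging the two copies of the polygon) of the pentagon used for the corresponding $w_i$.

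Second, I would argue that $\lambda$ carries each such pentagon to a pentagon of $\mathcal{C}(S')$, where $S'$ is the five-holed sphere bounded by the (pairwise disjoint) $h$-images of the five boundary curves. Adjacency (geometric intersection zero) is preserved because $\lambda$ is simplicial and preserves intersection zero on stars. Non-adjacency (geometric intersection nonzero) is preserved by the same three-case analysis as in Lemma \ref{1c}: if both curves of a pair lie in $\mathcal{B}_1$, use $h$; if the pair is one of those covered by Lemma \ref{1b}, quote it directly; otherwise complete the first curve of the pair to a top-dimensional pants decomposition $P$ of $N$, using curves of $\mathcal{B}_1 \cup \{w_1,\dots,w_{g+n},r_1,\dots,r_{g+n}\}$ all disjoint from the second curve, so that preservation of intersection zero forces $i(\lambda([x]),\lambda([y])) \neq 0$ for the remaining pair $x,y$.

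Third, once the pentagon structure is known to be preserved, Korkmaz's Theorem 3.2 in \cite{K1} applied inside $S'$ forces the geometric intersection number of the two suitably positioned curves of the pentagon — here $\lambda([r_i])$ and $\lambda([b_{j,k}])$ — to be exactly $2$, which is the assertion. As in Lemma \ref{1c}, it suffices to write the argument out for $g=4$, $n \geq 1$; the remaining values of $(g,n)$ are identical after relabeling.

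The main obstacle, exactly as in Lemma \ref{1c}, is the bookkeeping in the third case of the non-adjacency check: for every pentagon pair not already handled by Lemma \ref{1b}, one must produce an explicit top-dimensional pants decomposition of $N$ that contains the first curve and is disjoint from the second, using only curves available in $\mathcal{B}_1 \cup \{w_1,\dots,w_{g+n},r_1,\dots,r_{g+n}\}$, and then check that the five chosen curves really do span a pentagon in $\mathcal{C}(S)$ with no extra edges. This is where Figures \ref{fig-2a} and \ref{Fig-2-a} carry the load; since the $r_i$ are the mirror images of the $w_i$, each verification is symmetric to one already performed for Lemma \ref{1c}, so no genuinely new configuration needs to be analyzed.
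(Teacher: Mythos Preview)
Your proposal is correct and matches the paper's approach exactly: the paper's proof of Lemma \ref{1d} consists of the single sentence ``The proof is similar to the proof of Lemma \ref{1c} (use Lemma \ref{1b}),'' and what you have written is precisely the unpacking of that sentence.
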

 
\begin{proof} The proof is similar to the proof of Lemma \ref{1c} (use Lemma \ref{1b}).\end{proof}\\

Let $\mathcal{B}_2 = \{w_1, w_2,  \dots, w_{g+n}, r_1, r_2, \dots, r_{g+n}\}$. In Figure \ref{Fig-2-a} we show the curves $r_i, w_i$.   
  
\begin{lemma} \label{B_2} $\mathcal{B}_1 \cup \mathcal{B}_2$ is a finite rigid set with trivial pointwise stabilizer.\end{lemma}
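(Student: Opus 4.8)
The plan is to bootstrap off the Ilbira--Korkmaz rigid set $\mathcal{B}_1=\mathcal{X}$ and then pin down the images of the new curves $w_i,r_i$ one at a time, using the intersection data of Lemmas \ref{1a}--\ref{1d}, and finally treat the pointwise stabilizer separately.

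\emph{Reduction.} Let $\lambda\colon\mathcal{B}_1\cup\mathcal{B}_2\to\mathcal{C}(N)$ be locally injective and simplicial. Its restriction to $\mathcal{B}_1$ is again locally injective, so by Theorem \ref{result} (applicable since $g+n\geq 5\neq 4$) there is a homeomorphism $h\colon N\to N$ with $h([x])=\lambda([x])$ for all $x\in\mathcal{B}_1$. Replacing $\lambda$ by $(h^{-1})_{*}\circ\lambda$, I may assume $\lambda$ fixes $\mathcal{B}_1$ pointwise; it then suffices to prove $\lambda([w_i])=[w_i]$ and $\lambda([r_i])=[r_i]$ for every $i$, for then $\lambda$ is the identity on $\mathcal{B}_1\cup\mathcal{B}_2$, the original (pre-normalization) $\lambda$ agrees with $h_{*}$, and rigidity follows.

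\emph{Locating the new curves.} Fix $i$. From the proof of Lemma \ref{1a} one extracts two top-dimensional pants decompositions $P,Q\subset\mathcal{B}_1$ that coincide except in a single curve, with $w_i$ disjoint from $P\setminus\{b\}$ and from $Q\setminus\{b'\}$ for the two ``diagonal'' curves $b,b'$ named there; hence $w_i$ is disjoint from $P\cap Q$, a family of $2g+n-4$ disjoint $\mathcal{B}_1$-curves cutting $N$ into pairs of pants together with a single complexity-one subsurface $F_i$ that contains $w_i$, $b$ and $b'$ as essential curves. Since $\lambda$ fixes $P\cap Q$ pointwise and preserves disjointness, $\lambda([w_i])$ is supported in $F_i$, and by Lemma \ref{1c} it satisfies $i(\lambda([w_i]),[b])=i(\lambda([w_i]),[b'])=2$. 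Using the slope parametrization of essential curves in the complexity-one surface $F_i$ (Korkmaz \cite{K1}), these two intersection equalities (together with disjointness from all of $P\cap Q$) cut the candidates down to a finite list, of which $[w_i]$ is the unique member with the prescribed intersection $2$ with \emph{both} $[b]$ and $[b']$; hence $\lambda([w_i])=[w_i]$. The identical argument with Lemma \ref{1d} in place of Lemma \ref{1c} gives $\lambda([r_i])=[r_i]$; where the $w_i$- and $r_i$-analyses share a common finite candidate list, local injectivity of $\lambda$ (so $\lambda([w_i])\neq\lambda([r_i])$, since $w_i,r_i$ lie in a common star) together with one further incidence read off from Figures \ref{fig-s11} and \ref{Fig-2-a} — a curve of $\mathcal{B}_1\cup\mathcal{B}_2$ disjoint from one but meeting the other, or the pentagon of Lemma \ref{1c} whose $\mathcal{B}_1$-vertices are already fixed and which $\lambda$ must carry to a pentagon — breaks the remaining tie. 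This completes the proof that $\mathcal{B}_1\cup\mathcal{B}_2$ is rigid.

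\emph{Trivial pointwise stabilizer.} Suppose $\phi\in Mod_N$ fixes the isotopy class of every curve in $\mathcal{B}_1\cup\mathcal{B}_2$. The set $\{a_1,\dots,a_g\}$ together with the curves $b_{i,j}$ of $\mathcal{B}_1$ used above forms a pants decomposition $P_0$ of $N$; fixing each component of $P_0$, $\phi$ restricts on each complementary pair of pants to a homeomorphism fixing its boundary, so $\phi$ is a product of Dehn twists along $P_0$ and possibly ``flip'' involutions on some pieces. The curves $a_{i,j}\in\mathcal{B}_1$ are dual to the one-sided curves $a_i$, and the curves $w_i,r_i\in\mathcal{B}_2$ are dual to the remaining curves of $P_0$ — this is the extra leverage that $\mathcal{B}_2$ gives over $\mathcal{B}_1$ — so requiring $\phi$ to fix all of their isotopy classes forces every twist coefficient to vanish and kills every flip. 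Since $g+n\geq 5$ there are no exceptional identifications and the center of $Mod_N$ is trivial (see \cite{St2}), so $\phi=\mathrm{id}$; thus $\mathcal{B}_1\cup\mathcal{B}_2$ has trivial pointwise stabilizer.

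\emph{Main obstacle.} The heart of the argument is the middle step: verifying that the ``intersection number $2$'' data of Lemmas \ref{1c}--\ref{1d} genuinely confines $\lambda([w_i])$ and $\lambda([r_i])$ to a complexity-one subsurface and there singles them out up to at worst the $w_i\leftrightarrow r_i$ ambiguity, and then eliminating that ambiguity. This is the only place where the explicit geometry of the figures (the choice of subsurface, the identification of the two diagonal reference curves, the pentagon configurations) enters rather than a soft simplicial argument, and it has to be carried out case by case according to the topological type of the index $i$.
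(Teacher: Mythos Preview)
Your overall strategy matches the paper's: both invoke Theorem~\ref{result} to handle $\mathcal{B}_1$, then use the intersection-number-two data of Lemmas~\ref{1c} and~\ref{1d} to control $\lambda([w_i])$ and $\lambda([r_i])$; the paper simply cites this route and points to the analogous Lemma~3.6 of \cite{Ir10} for the details, while you attempt to spell them out.

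There is, however, a genuine gap at the heart of your middle step. You assert that in the complexity-one subsurface $F_i$ the class $[w_i]$ is the \emph{unique} curve with $i(\cdot,[b])=i(\cdot,[b'])=2$, and conclude $\lambda([w_i])=[w_i]$ immediately. This is false: in a four-holed sphere the curve graph is the Farey graph, and given two adjacent vertices $[b],[b']$ there are exactly \emph{two} vertices adjacent to both---and by Lemmas~\ref{1c} and~\ref{1d} these two are precisely $[w_i]$ and $[r_i]$. So the intersection data alone yield only $\{\lambda([w_i]),\lambda([r_i])\}=\{[w_i],[r_i]\}$, and local injectivity gives no more than that the pair is preserved as a set. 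Your proposed tie-breakers are not sufficient as stated: the pentagon of Lemma~\ref{1c} involves $w_2$ and $r_3$ (different indices), and its three $\mathcal{B}_1$-vertices $b_{3,g+n},b_{1,3},b_{1,4}$ are either boundary curves of $F_2$ or lie outside it, so they do not separate $[w_2]$ from $[r_2]$ inside $F_2$; and you never exhibit the promised ``one further incidence'' in $\mathcal{B}_1$ that is disjoint from one of $w_i,r_i$ but not the other.

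The resolution that the argument in \cite{Ir10} actually uses is to produce such a distinguishing curve explicitly from among the one-sided curves $a_{k,\ell}\in\mathcal{B}_1$ (which you never invoke): an appropriate $a_{k,\ell}$ enters the four-holed region $F_i$ and is disjoint from exactly one of $w_i,r_i$. Since $\lambda$ fixes every $a_{k,\ell}$ and preserves disjointness, this breaks the $w_i\leftrightarrow r_i$ symmetry. Without naming that curve your argument is incomplete. Your treatment of the trivial pointwise stabilizer is fine in outline and is likewise what \cite{Ir10} does.
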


\begin{proof} We can see the proof by using Lemma \ref{1c} and Lemma \ref{1d}. The proof is similar to the proof of 
Lemma 3.6 in \cite{Ir10}.\end{proof}\\

From now on in the figures we will fill some disks with stripes to mean that the boundary of each disk either bounds a M\"{o}bius band or is isotopic to a boundary component of $N$. In Figure \ref{Fig-29} (i) we see a disk which is filled with stripes. This will mean that (i) If $2 \leq i \leq g-1$, then the boundary of the disk bounds a  M\"{o}bius band and has $a_{i-1}$ as the core; (ii) If $i=1$ and $N$ is closed, then
the boundary of the disk bounds a M\"{o}bius band and has $a_{g}$ as the core; and (iii) If $i=1$ and $N$ is not closed, then the boundary of the disk is isotopic to a boundary component of $N$. In Figure \ref{Fig-29}, Figure \ref{Fig-30} and Figure \ref{Fig-3-aa} we assume that the indices $i$ and $j$ satisfy the equation $|j-i|=2 \ (mod \ ( g+n))$.  
Let $\mathcal{B}_3 = \{u_1, u_2, \dots, u_{g-1}, v_1, v_2, \dots, v_{g-1}, x_2, x_3, \dots, x_{g}\}$ where the curves are as shown in Figure \ref{Fig-29}.

\begin{lemma} \label{B_3} $\bigcup_{i=1} ^3 \mathcal{B}_i$ is a finite rigid set.\end{lemma}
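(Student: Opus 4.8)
The plan is to bootstrap the rigidity of $\mathcal{B}_1\cup\mathcal{B}_2$, established in Lemma~\ref{B_2}, up to the set $\bigcup_{i=1}^{3}\mathcal{B}_i$ by recognizing each curve of $\mathcal{B}_3$ from its pattern of intersections with curves whose images are already pinned down. Finiteness is immediate since each $\mathcal{B}_i$ is finite, so the content is rigidity.

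First I would take a locally injective simplicial map $\lambda\colon\bigcup_{i=1}^{3}\mathcal{B}_i\to\mathcal{C}(N)$ and restrict it to $\mathcal{B}_1\cup\mathcal{B}_2$; the restriction is again locally injective, so by Lemma~\ref{B_2} there is a homeomorphism $h$ of $N$ with $h([x])=\lambda([x])$ for every $x\in\mathcal{B}_1\cup\mathcal{B}_2$. Since $h^{-1}\circ\lambda$ is still a locally injective simplicial map and fixes every vertex of $\mathcal{B}_1\cup\mathcal{B}_2$, after replacing $\lambda$ by $h^{-1}\circ\lambda$ we may assume $\lambda$ fixes every vertex of $\mathcal{B}_1\cup\mathcal{B}_2$. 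It then remains to prove that $\lambda([c])=[c]$ for each $c$ among $u_1,\dots,u_{g-1},v_1,\dots,v_{g-1},x_2,\dots,x_g$.

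Fix such a $c$. Following the template of the proofs of Lemmas~\ref{1a} and~\ref{1c}, I would first exhibit a low-complexity subsurface $S\subset N$ (read off from Figure~\ref{Fig-29} and the conventions on the striped disks) all of whose boundary curves lie in $\mathcal{B}_1\cup\mathcal{B}_2$; since $\lambda$ fixes $\partial S$ pointwise and sends edges to edges, $\lambda([c])$ is disjoint from $\partial S$, and applying the same reasoning to all curves of $\mathcal{B}_1\cup\mathcal{B}_2$ disjoint from $c$ shows $\lambda([c])$ is supported in $S$. Inside $\mathcal{C}(S)$ the vertex $c$ sits in a small polygon (a square or a pentagon) whose other vertices lie in $\mathcal{B}_1\cup\mathcal{B}_2$ together with curves of $\mathcal{B}_3$ already treated. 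To push the nonzero-intersection edges of this polygon through $\lambda$---which is only locally injective, hence a priori need not preserve nonzero intersection---I would complete the relevant curves to top-dimensional pants decompositions of $N$ using curves from $\bigcup_{i=1}^{3}\mathcal{B}_i$, exactly as in Lemma~\ref{1a}, forcing the appropriate images to have geometric intersection zero and hence deducing that the complementary pairs intersect. The images of the polygon vertices then form a genuine polygon in $\mathcal{C}(S)$, so Korkmaz's Theorem~3.2 in \cite{K1} (for pentagons) and the analogous four-cycle argument give the precise intersection numbers of $\lambda([c])$ with its polygon-neighbors, equal (to $1$ or $2$) to the corresponding numbers for $[c]$. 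Combined with the disjointness relations $\lambda([c])$ inherits from $c$, and the fact that $[c]$ is the unique vertex of $\mathcal{C}(S)$ with this intersection profile, we obtain $\lambda([c])=[c]$.

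I would run through the curves of $\mathcal{B}_3$ in an order (for instance the $x_j$ first, then the $u_i$ and $v_i$) chosen so that, whenever a polygon-neighbor of the current curve lies in $\mathcal{B}_3$, it has already been shown fixed; the cases $i=1$ versus $2\le i\le g-1$, and $N$ closed versus $N$ with boundary, must be handled separately because of the differing meanings of the striped disks in Figure~\ref{Fig-29}. Once $\lambda$ is shown to fix every vertex of $\mathcal{B}_3$, undoing the normalization shows that $\lambda$ is induced by $h$, so $\bigcup_{i=1}^{3}\mathcal{B}_i$ is rigid. The step I expect to be the main obstacle is the transfer of the intersection data: since $\lambda$ is merely locally injective, everything rests on producing, for each polygon edge, a top-dimensional pants decomposition witnessing the needed intersection-zero relations, and on choosing the subsurfaces $S$ so that all of $\partial S$ is already controlled and so that the polygon-plus-disjointness data really does characterize $c$ uniquely in $S$. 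Modulo these case-by-case verifications, the proof is an elaboration of Lemmas~\ref{1a}--\ref{1d} and of the argument for Lemma~\ref{B_2} (which parallels Lemma~3.6 in \cite{Ir10}).
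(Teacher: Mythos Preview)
Your approach diverges from the paper's in a significant way. The paper does not use pentagons, squares, or intersection-number computations for the curves of $\mathcal{B}_3$ at all. Instead, after restricting $\lambda$ to $\mathcal{B}_1\cup\mathcal{B}_2$ and obtaining $h$ from Lemma~\ref{B_2}, the paper shows directly that each curve $c\in\mathcal{B}_3$ is the \emph{unique} nontrivial curve (up to isotopy) disjoint from and nonisotopic to a carefully chosen finite set $A_c$ of curves, where every element of $A_c$ lies in $\mathcal{B}_1\cup\mathcal{B}_2$ (together with curves of $\mathcal{B}_3$ already handled). Since $\lambda$ preserves edges, $\lambda([c])$ is disjoint from each $h([a])$ for $a\in A_c$, and local injectivity on a common star supplies the nonisotopy conditions; uniqueness then forces $h([c])=\lambda([c])$. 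For instance, for $u_1$ one completes $r_1,a_1,a_2,b_{2,g+n-1}$ to a top-dimensional pants decomposition $P\subset\mathcal{B}_1\cup\mathcal{B}_2$ with $u_1$ disjoint from $P\setminus\{a_1\}$; the only curve with that property and not isotopic to $a_1$ is $u_1$ itself. The $v_i$ are then determined using the $u_i$, and the $x_i$ by disjointness from suitable $a_j$, $a_{i,j}$, $b_{i,j}$. No nonzero-intersection information is ever needed.

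Your pentagon machinery would be called for only if disjointness alone failed to pin the curves down, as was genuinely the case for the $w_i,r_i\in\mathcal{B}_2$; but for $\mathcal{B}_3$ the disjointness characterization already suffices, so your route is more elaborate than necessary. Moreover, the step you yourself flag as the main obstacle---producing the pentagons and invoking Korkmaz's Theorem~3.2---is not actually carried out in your proposal and is nontrivial here: the $u_i$ are $1$-sided and the relevant subsurfaces need not be planar, so that theorem (which concerns spheres with holes) does not apply directly. The paper's argument sidesteps all of this.
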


\begin{figure}
	\begin{center} 
		\epsfxsize=1.60in \epsfbox{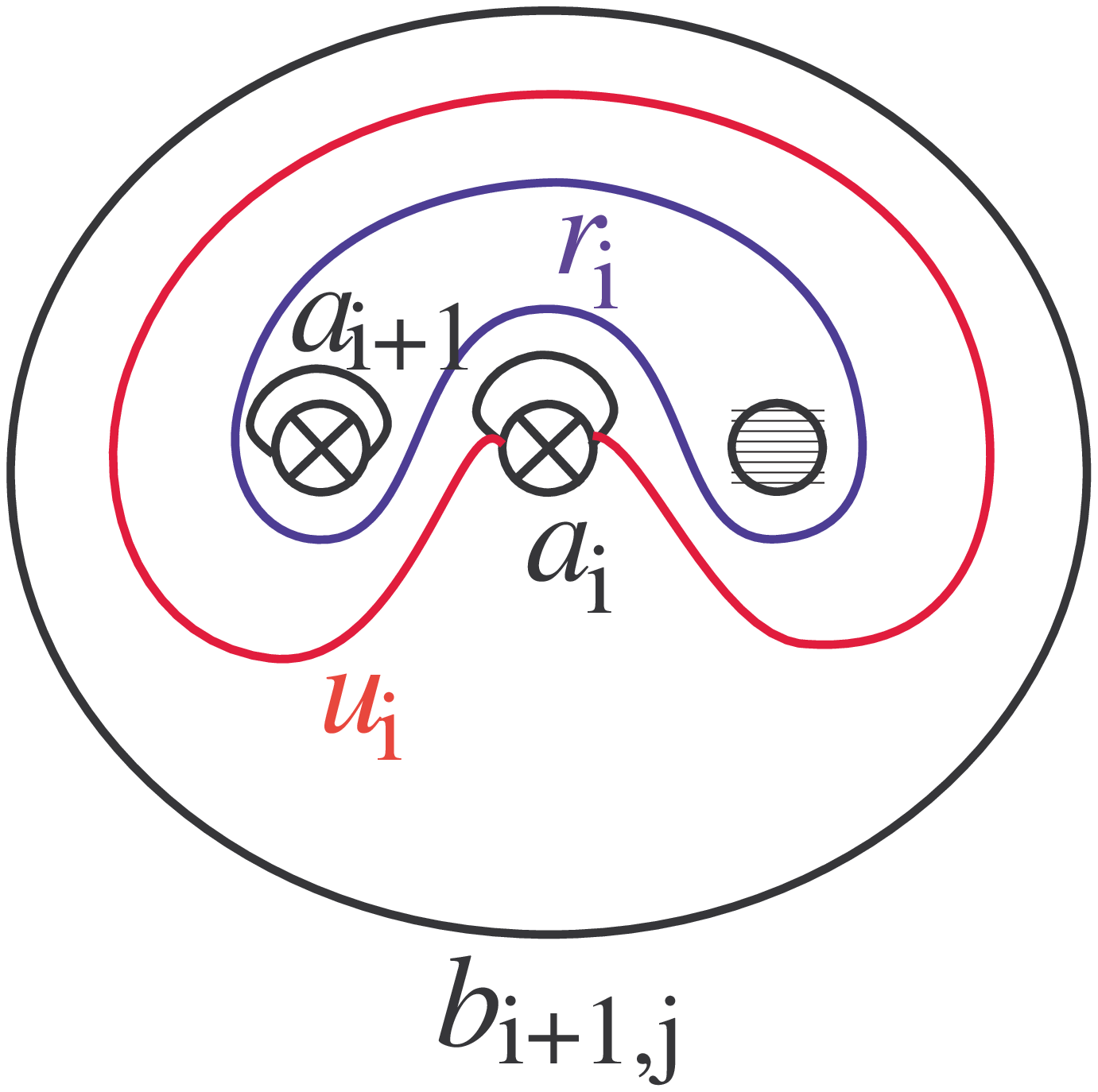} \hspace{0.1cm} 	
		\epsfxsize=1.60in \epsfbox{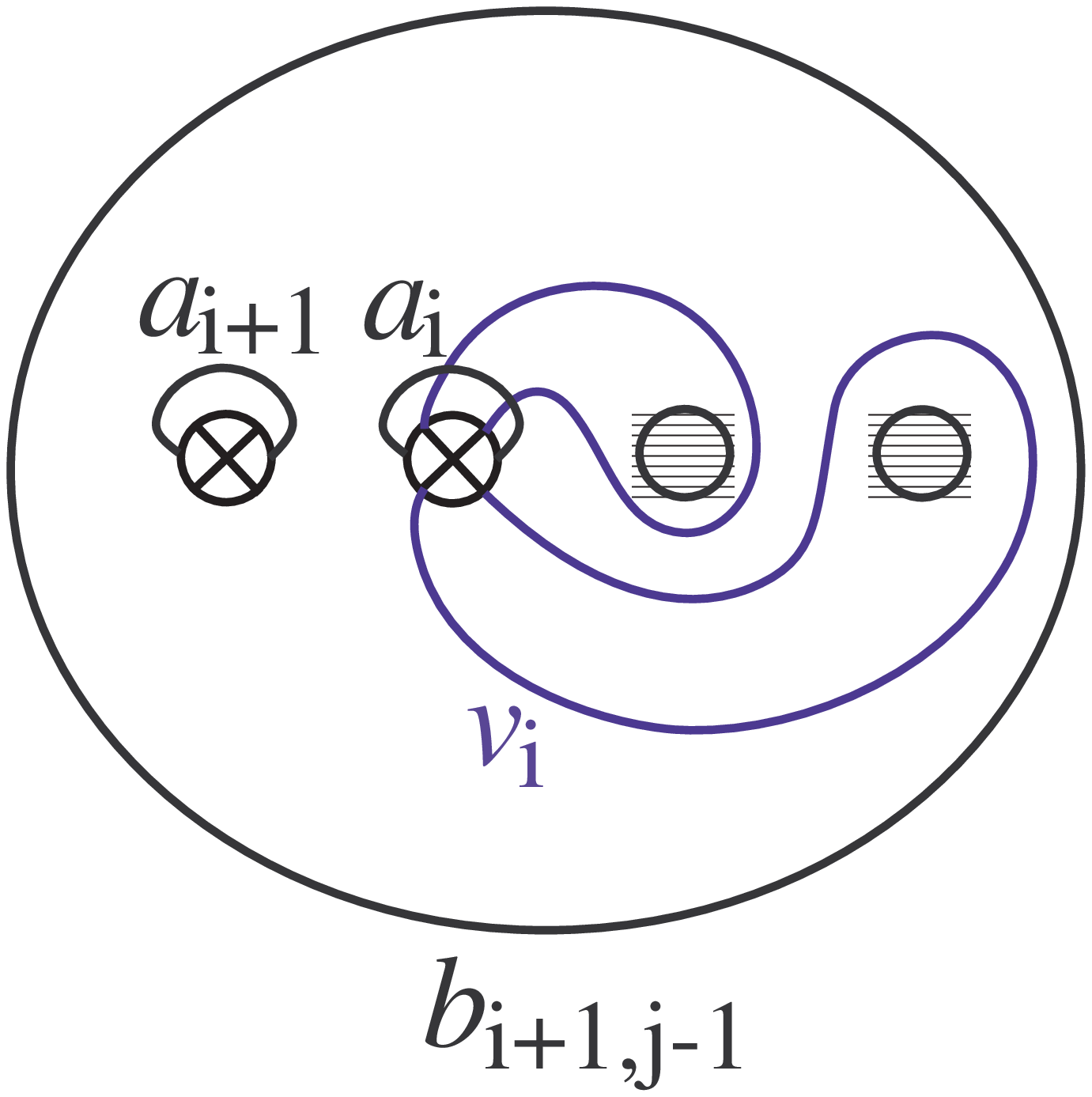} \hspace{0.1cm} 
		\epsfxsize=1.6in \epsfbox{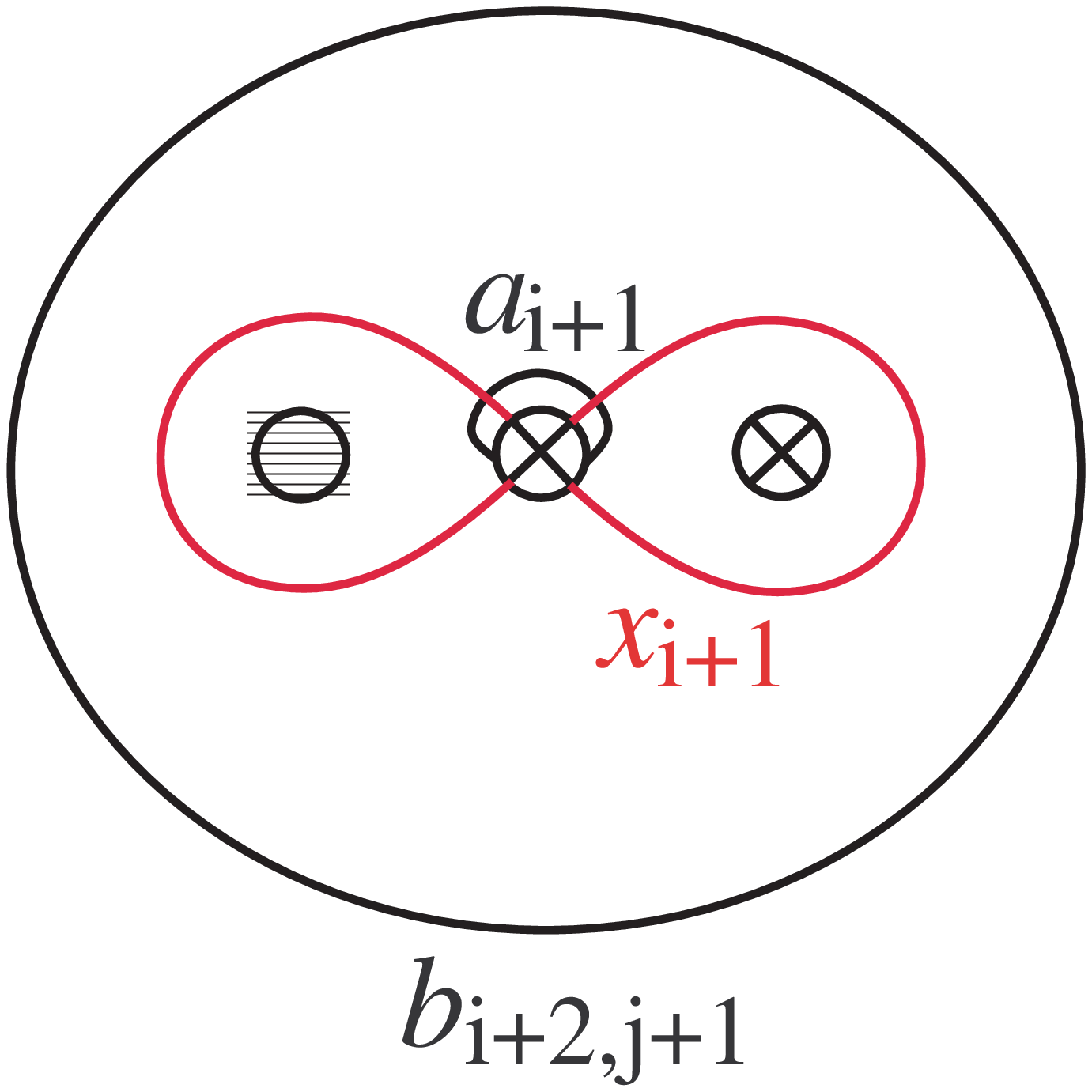}  \hspace{0.1cm}  
		
		(i)  \hspace{3.6cm}   (ii)  \hspace{3.6cm}   (iii) 
		
		\caption{Curves in $\mathcal{B}_3$}\label{Fig-29}
	\end{center}
\end{figure}

\begin{proof} We will give the proof when $n \geq 2$. The proofs for $n \leq 1$ cases are similar. Let $\lambda: \bigcup_{i=1} ^3 \mathcal{B}_i  \rightarrow \mathcal{C}(N)$ be a locally injective simplicial map. Since $\lambda$ restricts to a locally injective simplicial map on $\mathcal{B}_1 \cup \mathcal{B}_2$, by Lemma \ref{B_2} there exists a homeomorphism $h$ of $N$ such that $h([x]) = \lambda([x])$ for every $x$ in $\mathcal{B}_1 \cup \mathcal{B}_2$. 
	
Claim 1: $h([u_i]) = \lambda([u_i])$ for all $i =1, 2, \dots, g-1$.  
	
Proof of Claim 1: We first want to show that $h([u_1]) = \lambda([u_1])$. Using the curves in $\mathcal{B}_1 \cup \mathcal{B}_2$, we can complete the curves $r_1, a_1, a_2$ and 
$b_{2,g+n-1}$ to a top dimensional pants decomposition $P$ such that $u_1$ is disjoint from every element in $P \setminus \{a_1\}$. This implies that $\lambda([u_1])$ is disjoint from every element in $\lambda([P]) \setminus \{\lambda([a_1])\}$ since $\lambda$ is locally injective. Since $[u_1] \neq [a_1]$ and they are in the star of a vertex in $\mathcal{B}_1 \cup \mathcal{B}_2$, we see that $\lambda([u_{1}]) \neq \lambda([a_{1}])$. Since $h([x]) = \lambda([x])$ for every $x$ in $\mathcal{B}_1 \cup \mathcal{B}_2$, and $\lambda([u_{1}]) \neq \lambda([a_{1}])$ and $\lambda([u_1])$ is disjoint from every element in $\lambda([P]) \setminus \{\lambda([a_1])\}$, we see that 
$h([u_1]) = \lambda([u_1])$. Similarly we get $h([u_{i}]) = \lambda([u_{i}])$ for each $i$.
	
Claim 2: $h([v_i]) = \lambda([v_i])$ for all $i =1, 2, \dots, g-1$.
	
Proof of Claim 2: We will show that $h([v_1]) = \lambda([v_1])$. Cutting $N$ along the curve $b_{1,g+n-2}$ gives a nonorientable surface, $M_1$, of genus one with three boundary components, which contains the curve $a_{1,g+n-1}$. The curve $v_1$ is the unique curve up to isotopy disjoint from and nonisotopic to each of $u_1, a_{1,g+n-1}$ and $b_{1,g+n-2}$ and all the curves of $\mathcal{B}_1 \cup \mathcal{B}_2$ that are in $N \setminus M_1$. Since $h$ and $\lambda$ agree on the isotopy class of all these curves, we get $h([v_1]) = \lambda([v_1])$. Cutting $N$ along the curve $b_{2,g+n-1}$ gives a nonorientable surface, $M_2$, of genus two with two boundary components, which contains the curve $a_{2,g+n}$. The curve $v_2$ is the unique curve up to isotopy disjoint from and nonisotopic to each of $u_1, a_{2,g+n}, a_1$ and $b_{2,g+n-1}$ and all the curves of $\mathcal{B}_1 \cup \mathcal{B}_2$ that are in $N \setminus M_2$. Since $h$ and $\lambda$ agree on the isotopy class of all these curves, we see that $h([v_2]) = \lambda([v_2])$.
Cutting $N$ along the curve $b_{3,g+n}$ gives a nonorientable surface, $M_3$, of genus three with one boundary component, which contains the curve $a_{3,1}$. 
The curve $v_3$ is the unique curve up to isotopy disjoint from and nonisotopic to each of $u_{2}, a_{3,1}, a_1, a_2$ and $b_{3,g+n}$ and all the curves of $\mathcal{B}_1 \cup \mathcal{B}_2$ that are in $N \setminus M_3$. Since $h$ and $\lambda$ agree on the isotopy class of all these curves, we see that $h([v_3]) = \lambda([v_3])$. Similarly, we have $h([v_i]) = \lambda([v_i])$ for each $i$.
	
\begin{figure}[t]
	\begin{center} 
		\epsfxsize=1.58in    \epsfbox{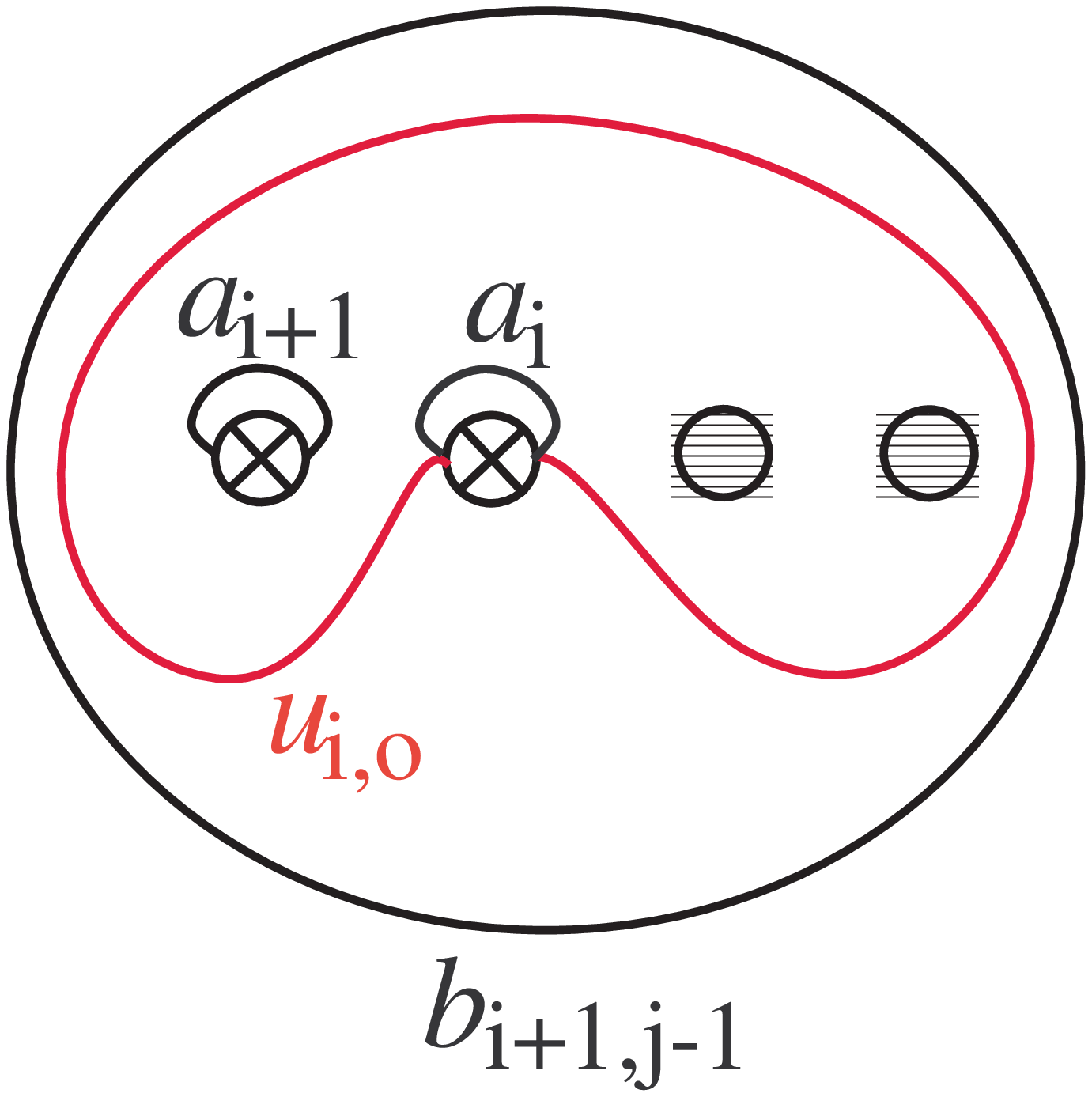}\hspace{0.1cm} 	
		\epsfxsize=1.58in \epsfbox{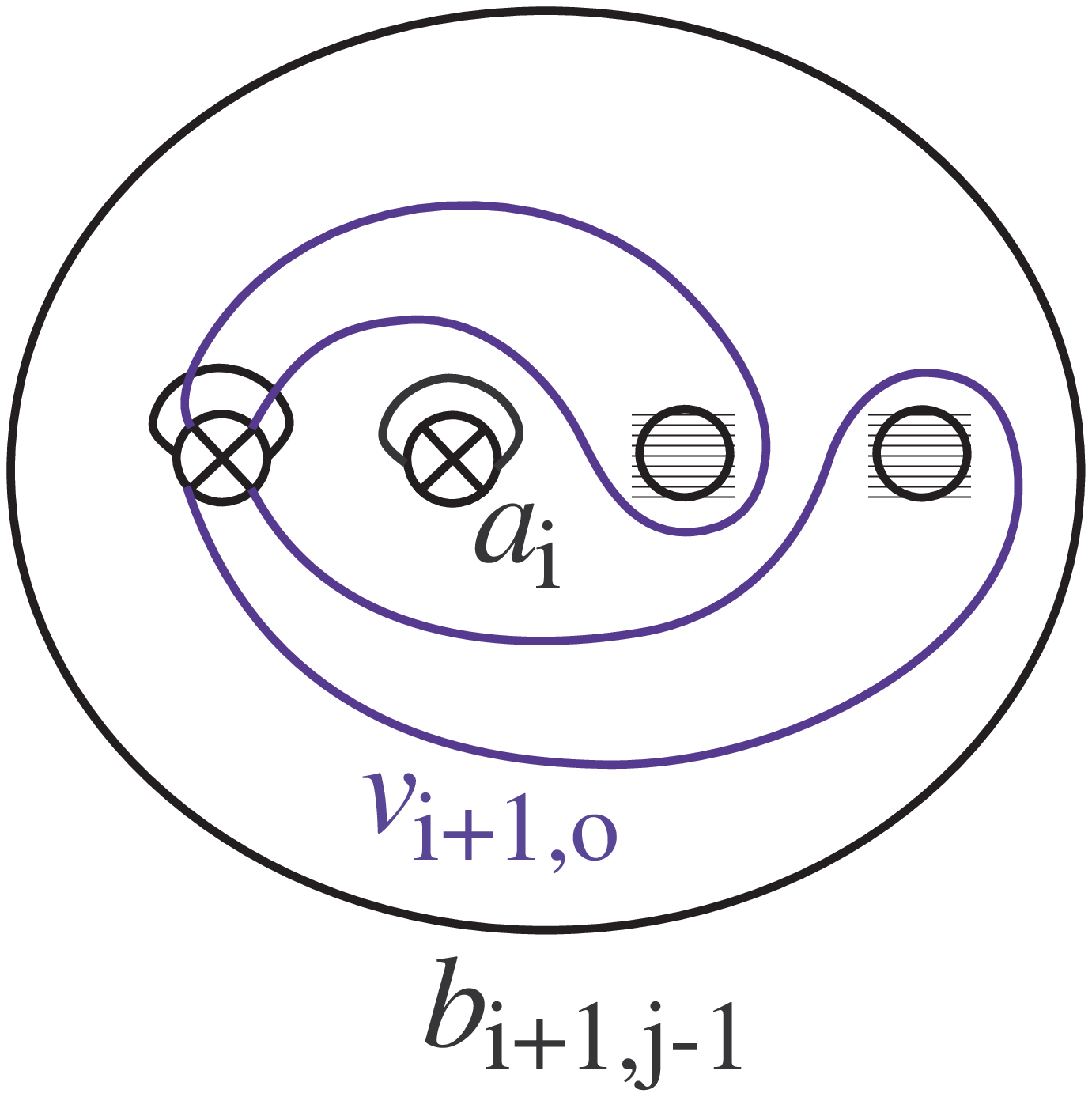}\hspace{0.1cm} 	\epsfxsize=1.58in \epsfbox{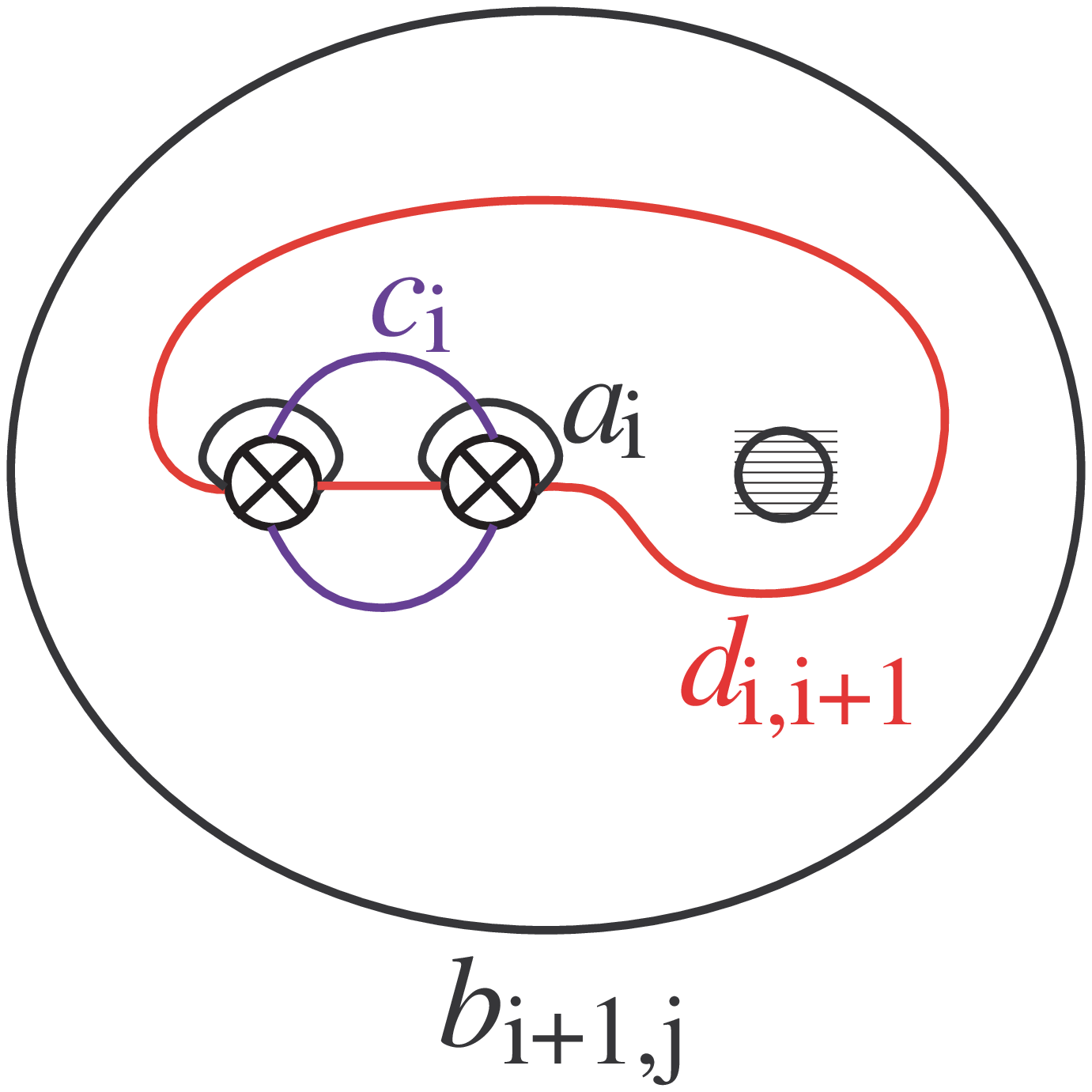}  \hspace{0.1cm}   
		
		(i)  \hspace{3.5cm}   (ii)  \hspace{3.5cm}   (iii)
		
		\caption{Curves in $\mathcal{B}_4$}\label{Fig-30}
	\end{center}
\end{figure} 
	
Claim 3: $h([x_{i}]) = \lambda([x_{i}])$ for all $i =2, 3, \dots, g$.

Proof of Claim 3:  (i) Suppose $g=2$. Cutting $N$ along the curve $b_{3,g+n}$ gives a nonorientable surface, $M_2$, of genus two with two boundary components, containing $a_1$ and $a_2$. The curve $x_2$ is the unique nontrivial simple closed curve up to isotopy disjoint from and nonisotopic to each of $a_1, a_{2,3}, a_{2,g+n}, b_{3,g+n}$ and all the curves of $\mathcal{B}_1 \cup \mathcal{B}_2$ that are in $N \setminus M_2$. Since $h$ and $\lambda$ agree on the isotopy class of all these curves, we see that $h([x_2]) = \lambda([x_2])$. 

(ii) Suppose $g=3$. Cutting $N$ along the curve $b_{3,g+n}$ gives a nonorientable surface, $M_3$, of genus three with one boundary component, containing $a_1, a_2$ and $a_3$. The curve $x_2$ is the unique nontrivial simple closed curve up to isotopy disjoint from and nonisotopic to each of $a_1, a_3,  a_{2,3}, a_{2,g+n}, b_{3,g+n}$ and all the curves of $\mathcal{B}_1 \cup \mathcal{B}_2$ that are in $N \setminus M_3$. Since $h$ and $\lambda$ agree on the isotopy class of all these curves, we see that $h([x_2]) = \lambda([x_2])$. When we cut $N$ along the curve $b_{4,1}$ we get a nonorientable surface, $M_2$, of genus two with two boundary components, containing $a_2$ and $a_3$. The curve $x_3$ is the unique nontrivial simple closed curve up to isotopy disjoint from and nonisotopic to each of $a_2, a_{3,4}, a_{3,1}, b_{4,1}$ and all the curves of $\mathcal{B}_1 \cup \mathcal{B}_2$ that are in $N \setminus M_2$. Since $h$ and $\lambda$ agree on the isotopy class of all these curves, we see that $h([x_3]) = \lambda([x_3])$.

Similarly, we have $h([x_n]) = \lambda([x_n])$ for all $i=2, 3,  \dots, g$ for all $g \geq 4$. Hence, we have $h([x]) = \lambda([x])$ for every $x$ in $\bigcup_{i=1} ^3 \mathcal{B}_i$.\end{proof}\\
 
Let $\mathcal{B}_4 = \{c_1, c_2, c_3, \dots, c_{g-1}, d_{1,2}, d_{2,3}, \dots, d_{g-1,g}, u_{1,o}, u_{2,o}, \dots, u_{g-1,o}, v_{2,o}, v_{3,o},$ $ \dots,$ $ v_{g,o}\}$ where the curves are as shown in Figure \ref{Fig-30} and Figure \ref{Fig-31}.

\begin{lemma} \label{B_4} $\bigcup_{i=1} ^4 \mathcal{B}_i$ is a finite rigid set.\end{lemma}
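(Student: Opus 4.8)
The plan is to repeat once more the inductive pattern already used for $\bigcup_{i=1}^3\mathcal{B}_i$. Let $\lambda:\bigcup_{i=1}^4\mathcal{B}_i\rightarrow\mathcal{C}(N)$ be a locally injective simplicial map. Since the star of a vertex in a subcomplex is contained in its star in $\mathcal{C}(N)$, the restriction of $\lambda$ to $\bigcup_{i=1}^3\mathcal{B}_i$ is again locally injective, so by Lemma \ref{B_3} there is a homeomorphism $h$ of $N$ with $h([x])=\lambda([x])$ for every $x\in\bigcup_{i=1}^3\mathcal{B}_i$. It then remains to prove $h([x])=\lambda([x])$ for each curve $x$ in the new part $\mathcal{B}_4=\{c_1,\dots,c_{g-1}\}\cup\{d_{1,2},\dots,d_{g-1,g}\}\cup\{u_{1,o},\dots,u_{g-1,o}\}\cup\{v_{2,o},\dots,v_{g,o}\}$. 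I would organize this as four successive claims, one per family, handled in the order $c_i$, then $d_{i,i+1}$, then $u_{i,o}$ and $v_{i,o}$, so that later claims may invoke curves pinned down in earlier ones.

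For each new curve $x$ the goal is to identify $\lambda([x])$ by one of the two moves already in play. The first move is the \emph{unique-disjoint-curve} argument of Claims 2 and 3 in Lemma \ref{B_3}: cut $N$ along a suitable curve $b_{i,j}$ (or $a_i$) of $\mathcal{B}_1$ to isolate a small nonorientable subsurface $M$ of genus two or three, inside which $x$ is the unique nontrivial isotopy class disjoint from, and non-isotopic to, a short explicit list $F$ of curves of $\bigcup_{i=1}^3\mathcal{B}_i$ (including the cutting curve and the curves of $\bigcup_{i=1}^3\mathcal{B}_i$ lying outside $M$, which pin $M$). Since $h$ and $\lambda$ agree on every element of $F$ and $\lambda$ sends curves disjoint from $F$ to curves disjoint from $\lambda(F)=h(F)$, the uniqueness forces $\lambda([x])=h([x])$. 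When no such uniqueness statement is available, the second move is to complete $x$ together with curves of $\bigcup_{i=1}^3\mathcal{B}_i$ to a top-dimensional pants decomposition $P$ in which $x$ is the only curve meeting the intended target; since $\lambda$ preserves geometric intersection zero, $\lambda([x])$ is then forced and one checks it equals $h([x])$. I expect the two-sided families $c_i$ and $d_{i,i+1}$ of Figure \ref{Fig-31} to fall to the first move with essentially the same cut-and-count bookkeeping as in Lemma \ref{B_3}.

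The step I expect to be the real obstacle is the pair of families $u_{i,o}$ and $v_{i,o}$ of Figure \ref{Fig-30}: these are one-sided curves whose complement in the relevant subsurface is orientable, and a locally injective simplicial map need not respect that property, so a bare unique-disjoint-curve argument can fail. Here the plan is to imitate the proof of Lemma \ref{rigid}, where the analogous curve $l$ was caught by the length-four path $[e]\rightarrow[j]\rightarrow[l]\rightarrow[w]\rightarrow[a_2]$ together with Scharlemann's description of the relevant small-genus curve complex. Concretely, I would cut $N$ along an appropriate curve of $\bigcup_{i=1}^3\mathcal{B}_i$ to reach a subsurface of small complexity whose curve complex is understood near these special one-sided curves, use the curves of $\mathcal{B}_3$ and $\mathcal{B}_4$ contained in that piece to build an explicit short path (or star) that only the correct isotopy class can complete, and conclude $\lambda([u_{i,o}])=h([u_{i,o}])$, and likewise for $v_{i,o}$, from local injectivity. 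Once all four claims are established, $h$ and $\lambda$ agree on all of $\bigcup_{i=1}^4\mathcal{B}_i$, which is manifestly finite, so the set is rigid.
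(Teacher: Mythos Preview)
Your overall architecture is right: restrict $\lambda$ to $\bigcup_{i=1}^{3}\mathcal{B}_i$, invoke Lemma~\ref{B_3} to get $h$, and then extend agreement curve by curve via unique-disjoint-curve arguments. The gap is in the \emph{order} of the four claims and in your diagnosis of which family is the obstacle.

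In the paper the dependency chain runs the opposite way from yours:
\[
u_{i,o}\ \longrightarrow\ v_{i,o}\ \longrightarrow\ d_{i,i+1}\ \longrightarrow\ c_i.
\]
The curve $u_{i,o}$ is pinned down \emph{first}, and by a plain unique-disjoint-curve argument using only curves already in $\mathcal{B}_1\cup\mathcal{B}_2$ (e.g.\ $u_{1,o}$ is the unique curve disjoint from $a_2,r_1,b_{2,g+n-2},b_{g+n,g+n-2}$ and the curves outside the cut piece, and nonisotopic to $a_1$). No path argument of the Lemma~\ref{rigid} type is needed or used anywhere in this lemma. Then $v_{i,o}$ is determined using $u_{i,o}$ together with $x_i\in\mathcal{B}_3$; then $d_{i,i+1}$ is determined using $v_i\in\mathcal{B}_3$ together with $v_{i+1,o}$; finally $c_i$ is determined using $d_{i,i+1}$.

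Your proposed order fails at the very first step. After cutting along $b_{2,g+n}$ one lands in a Klein bottle with one hole, and the curves of $\bigcup_{i=1}^{3}\mathcal{B}_i$ lying in that piece do not single out $c_1$: this is exactly the $(g,n)=(2,1)$ phenomenon discussed at the beginning of Section~2, where the two-sided curve cannot be captured by any finite configuration under a merely locally injective map. The same issue blocks $d_{i,i+1}$ without $v_{i+1,o}$. So the ``easy'' families in your plan are in fact the ones that require the new $\mathcal{B}_4$ curves already in hand, while the families you flagged as hard are the ones that come for free. Reverse the order and drop the path-argument detour, and the proof goes through by four straightforward unique-disjoint-curve claims.
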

	
\begin{proof} We will give the proof when $n \geq 2$. The proofs $n \leq 1$ are similar. Let $\lambda: \bigcup_{i=1} ^4 \mathcal{B}_i \rightarrow \mathcal{C}(N)$ be a locally injective simplicial map. It is easy to see that $\lambda$ restricts to a locally injective simplicial map on $\bigcup_{i=1} ^3 \mathcal{B}_i$. By Lemma \ref{B_3}, there exists a homeomorphism $h$ of $N$ such that $h([x]) = \lambda([x])$ for every $x$ in $ \bigcup_{i=1} ^3 \mathcal{B}_i$. 
		
\begin{figure}[t]
	\begin{center} 
		\epsfxsize=2.49in \epsfbox{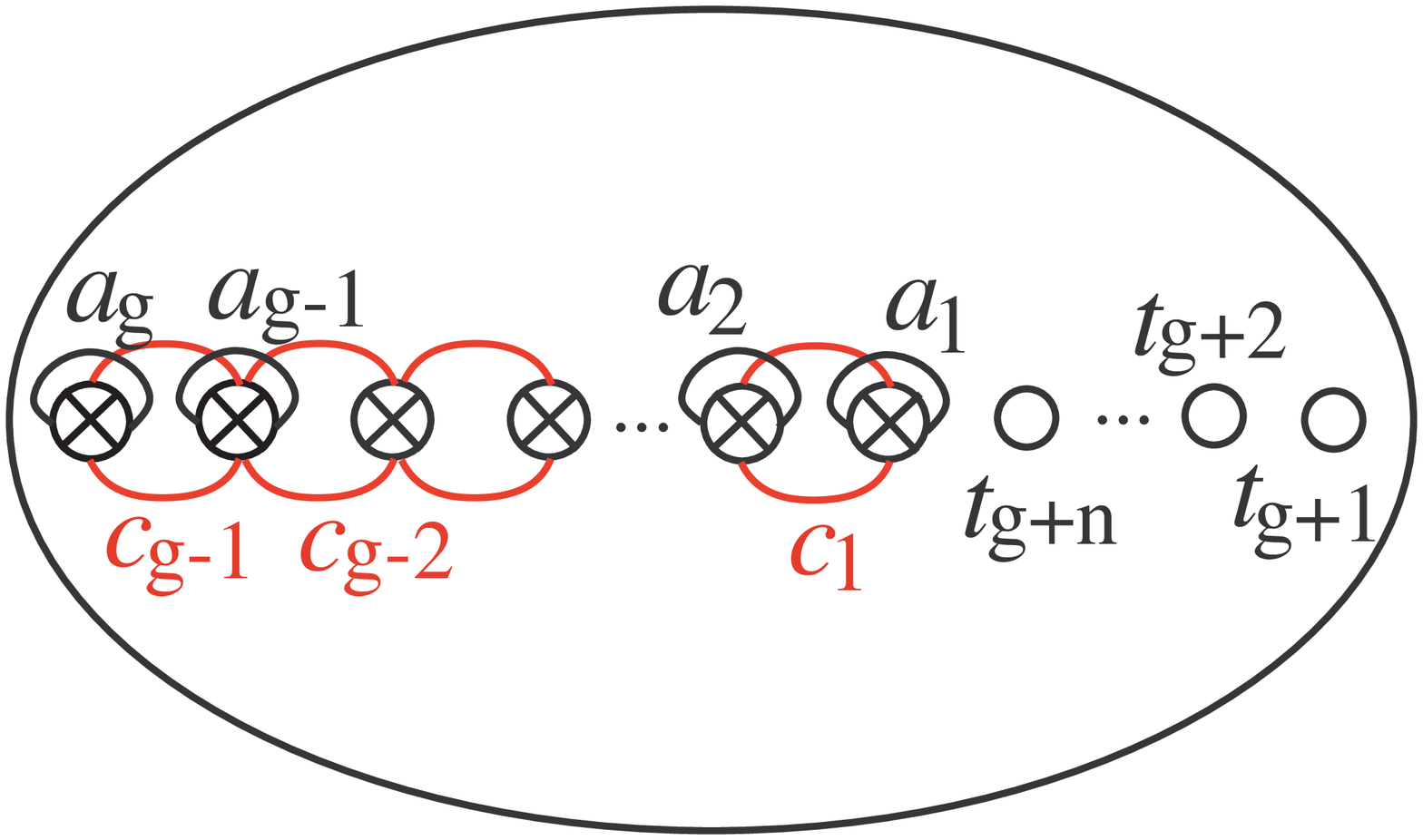} \hspace{0.09cm} 	\epsfxsize=2.49in \epsfbox{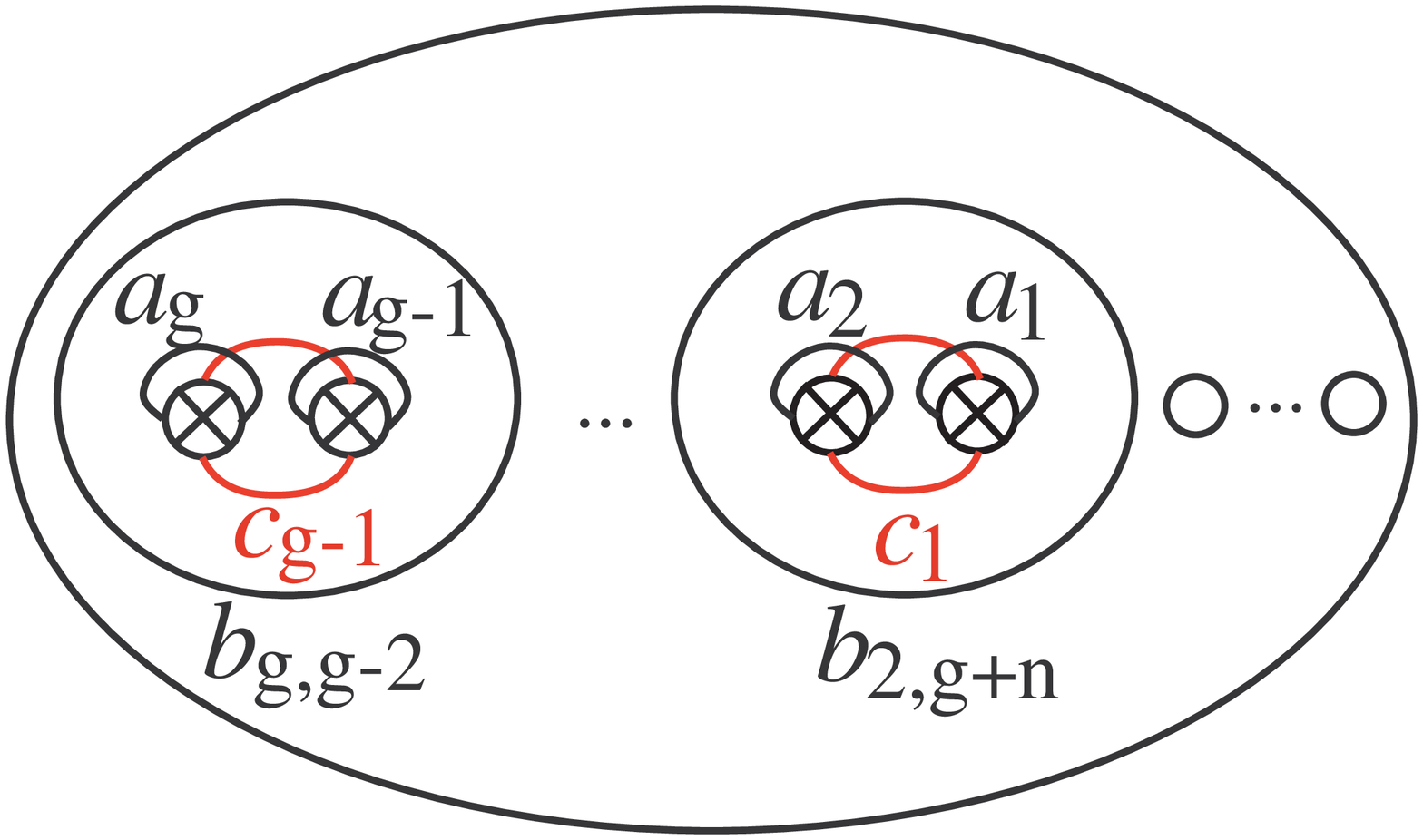} 
		
		\caption{Curves}\label{Fig-31}
	\end{center}
\end{figure} 
		
Claim 1: $h([u_{i,o}]) = \lambda([u_{i,o}])$ for all $i=1, 2, \dots, g-1$.  

Proof of Claim 1: When we cut $N$ along the curve $b_{2,g+n-2}$ we get a nonorientable surface, $M_2$, of genus two with three boundary components, containing $a_1$ and $a_2$. The curve $u_{1,o}$ is the unique nontrivial simple closed curve up to isotopy disjoint from $a_2, r_1, b_{2,g+n-2},$ $ b_{g+n,g+n-2}$, all the curves of $\mathcal{B}_1 \cup \mathcal{B}_2$ that are in $N \setminus M_2$, and also nonisotopic to $a_1$. Since $h$ and $\lambda$ agree on the isotopy class of all these curves, we have $h([u_{1,o}]) = \lambda([u_{1,o}])$. When we cut $N$ along the curve $b_{3,g+n-1}$ we get a nonorientable surface, $M_3$, of genus three with two boundary components, containing $a_1, a_2$ and $a_3$. The curve $u_{2,o}$ is the unique nontrivial simple closed curve up to isotopy disjoint from $a_1, a_3, r_2, b_{1,g+n-1}, b_{3,g+n-1}$, all the curves of $\mathcal{B}_1 \cup \mathcal{B}_2$ that are in $N \setminus M_3$, and also nonisotopic to $a_1$. Since $h$ and $\lambda$ agree on the isotopy class of all these curves, we have $h([u_{2,o}]) = \lambda([u_{2,o}])$. Similarly we get $h([u_{i,o}]) = \lambda([u_{i,o}])$ for all $i =3, 4, \dots, g-1$.

Claim 2: $h([v_{i,o}]) = \lambda([v_{i,o}])$ for all $i =2, 3, \dots, g$.

Proof of Claim 2: When we cut $N$ along the curve $b_{2,g+n-2}$ we get a nonorientable surface, $M_2$, of genus two with three boundary components, containing $a_1$ and $a_2$. The curve $v_{2,o}$ is the unique curve up to isotopy disjoint from $x_2, a_{2,g+n-1}, b_{2,g+n-2}, a_1, u_{2,o}$ and all the curves of $\mathcal{B}_1 \cup \mathcal{B}_2$ that are in $N \setminus M_2$. 
Since $h$ and $\lambda$ agree on the isotopy class of all these curves, we have $h([v_{2,o}]) = \lambda([v_{2,o}])$. 
Similarly, we have $h([v_{i,o}]) = \lambda([v_{i,o}])$ for all 
$i=3, 4, \dots, g$.
		
Claim 3: $h([d_{i,i+1}]) = \lambda([d_{i,i+1}])$ for all $i =1, 2, \dots, g-1$.
	
Proof of Claim 3: When we cut $N$ along the curve $b_{2,g+n-2}$ we get a nonorientable surface, $M_2$, of genus two with three boundary components, which contains $a_1$ and $a_2$. 
The curve $d_{1,2}$ is the unique nontrivial simple closed curve up to isotopy disjoint from and nonisotopic to each of $v_1, v_{2,o}, b_{2,g+n-1}, b_{2,g+n-2}$ and all the curves of $\mathcal{B}_1 \cup \mathcal{B}_2$ that are in $N \setminus M_2$. Since $h$ and $\lambda$ agree on the isotopy class of all these curves, we have $h([d_{1,2}]) = \lambda([d_{1,2}])$. When we cut $N$ along the curve $b_{3,g+n-1}$ we get a nonorientable surface, $M_3$, of genus three with two boundary components, which contains $a_1, a_2$ and $a_3$. The curve $d_{2,3}$ is the unique nontrivial simple closed curve up to isotopy disjoint from and nonisotopic to each of $a_1, v_2, v_{3,o}, b_{3,g+n}, b_{3,g+n-1}$ and all the curves of $\mathcal{B}_1 \cup \mathcal{B}_2$ that are in $N \setminus M_3$. Since $h$ and $\lambda$ agree on the isotopy class of all these curves, we have $h([d_{1,2}]) = \lambda([d_{1,2}])$. Similarly, we have $h([d_{i,i+1}]) = \lambda([d_{i,i+1}])$ for all $i =3, 4, \dots, g-1$.
	
Claim 4: $h([c_i]) = \lambda([c_i])$ for all $i =1, 2, \dots, g-1$. 
	
Proof of Claim 4: When we cut $N$ along the curve $b_{2,g+n}$ we get a nonorientable surface, $M_2$, of genus two with one boundary component, which contains $a_1$ and $a_2$. 
The curve $c_1$ is the unique nontrivial simple closed curve up to isotopy disjoint from and nonisotopic to each of $b_{2,g+n}, d_{1,2}$ and all the curves of $\mathcal{B}_1 \cup \mathcal{B}_2$ that are in $N \setminus M_2$. Since $h$ and $\lambda$ agree on the isotopy class of all these curves, we have $h([c_1]) = \lambda([c_1])$. When we cut $N$ along the curve $b_{1,3}$ we get a nonorientable surface, $K_2$, of genus two with one boundary component, which contains $a_2$ and $a_3$. 
The curve $c_2$ is the unique nontrivial simple closed curve up to isotopy disjoint from and nonisotopic to each of $b_{1,3}, d_{2,3}$ and all the curves of $\mathcal{B}_1 \cup \mathcal{B}_2$ that are in $N \setminus K_2$. Since $h$ and $\lambda$ agree on the isotopy class of all these curves, we have $h([c_2]) = \lambda([c_2])$. Similarly, $h([c_i]) = \lambda([c_i])$ for all $i =3, 4, \dots, g-1$. 
	
We proved that $h([x]) = \lambda([x])$ for every $x$ in $\bigcup_{i=1} ^4 \mathcal{B}_i$. Hence, $\bigcup_{i=1} ^4 \mathcal{B}_i$ is a finite rigid set.\end{proof}\\

\begin{figure}
	\begin{center} 
	\hspace{0.1cm} 	\epsfxsize=1.60in \epsfbox{rigid-code-fig95.eps}\hspace{0.2cm} 	
		\epsfxsize=1.60in \epsfbox{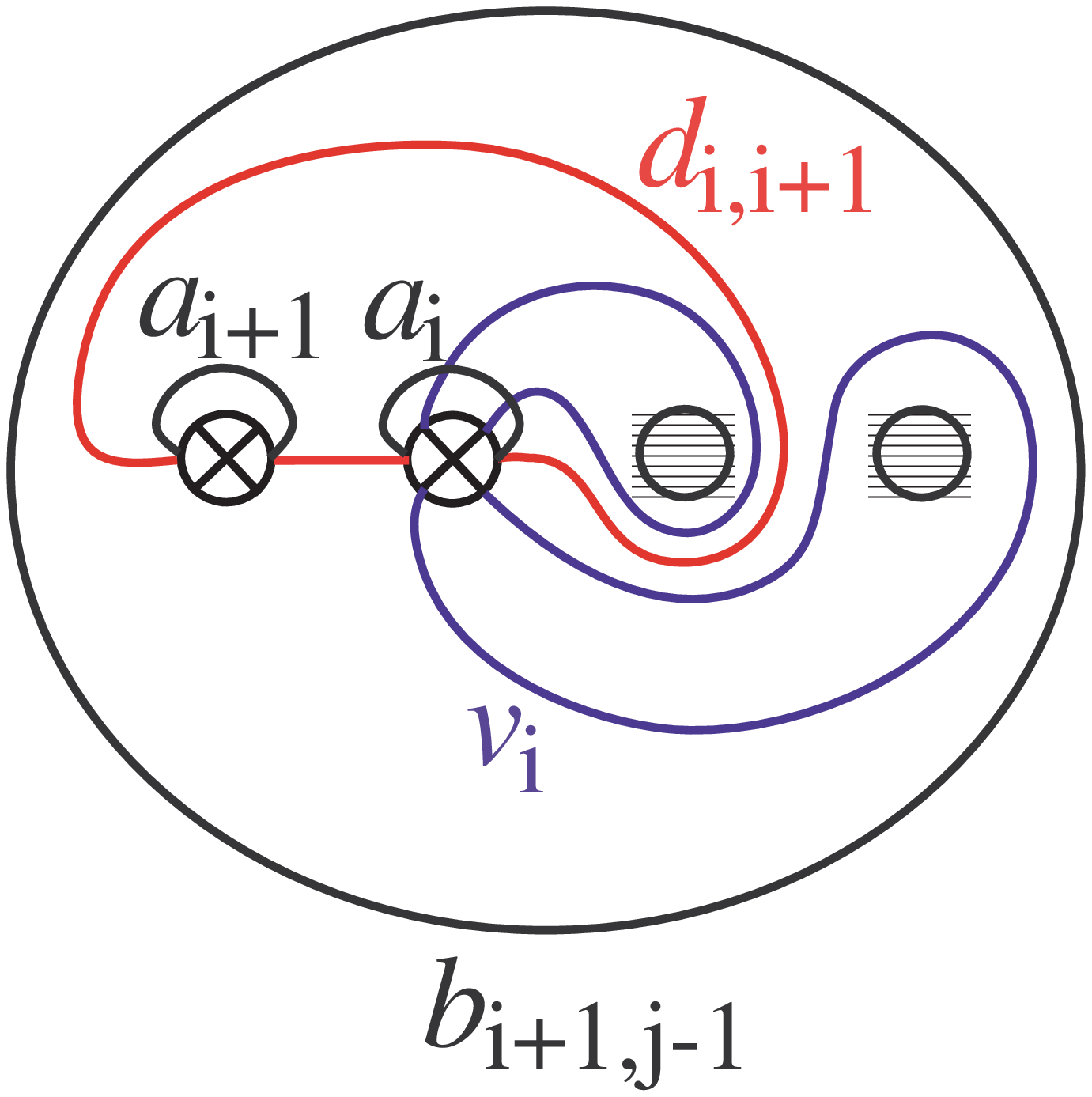}  \hspace{0.1cm} 	\epsfxsize=1.60in \epsfbox{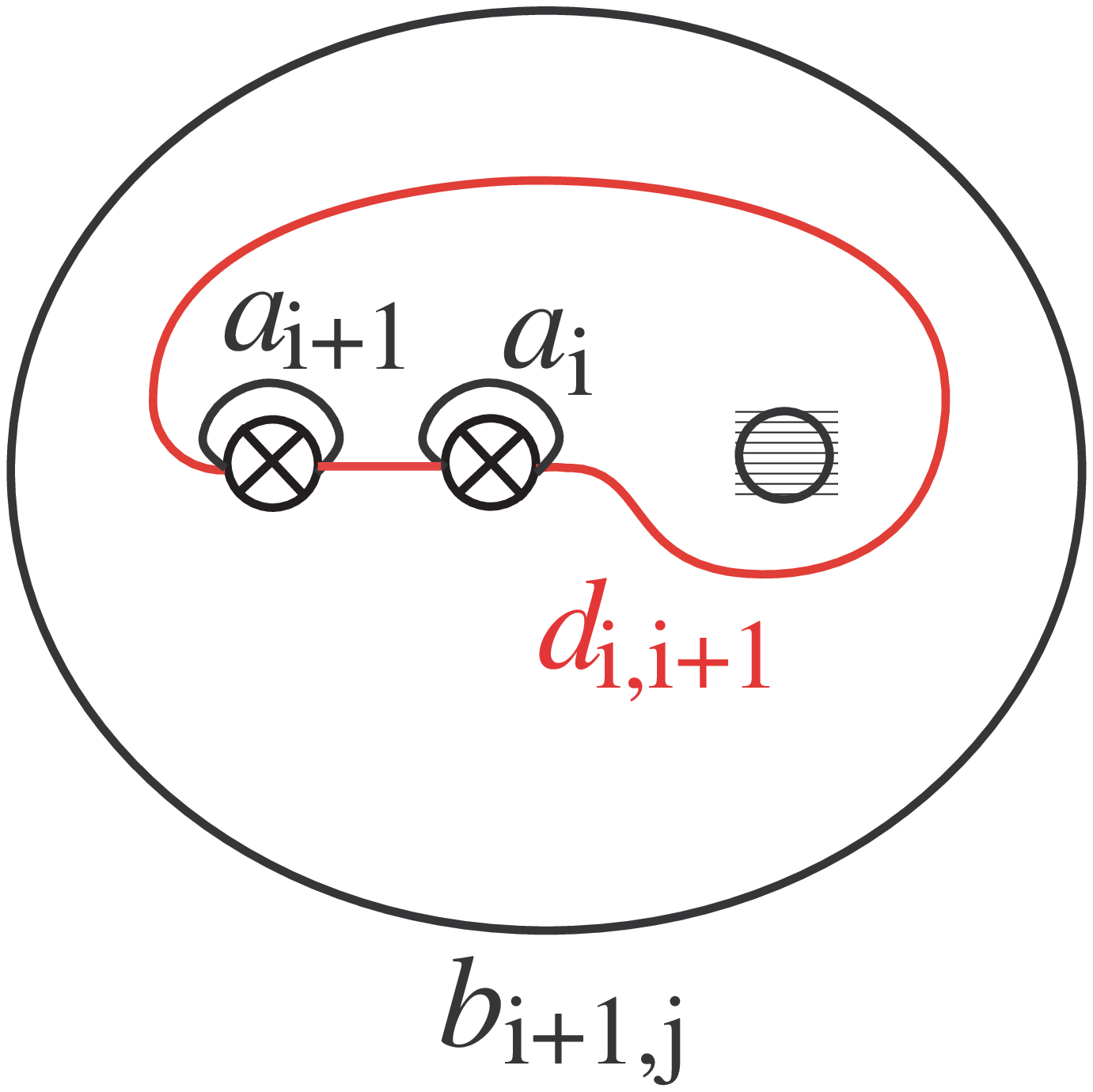}  \hspace{0.1cm}  
		
		(i)  \hspace{3.6cm}   (ii)  \hspace{3.6cm}   (iii) 
		
		\vspace{0.1cm}
		
		\epsfxsize=1.60in \epsfbox{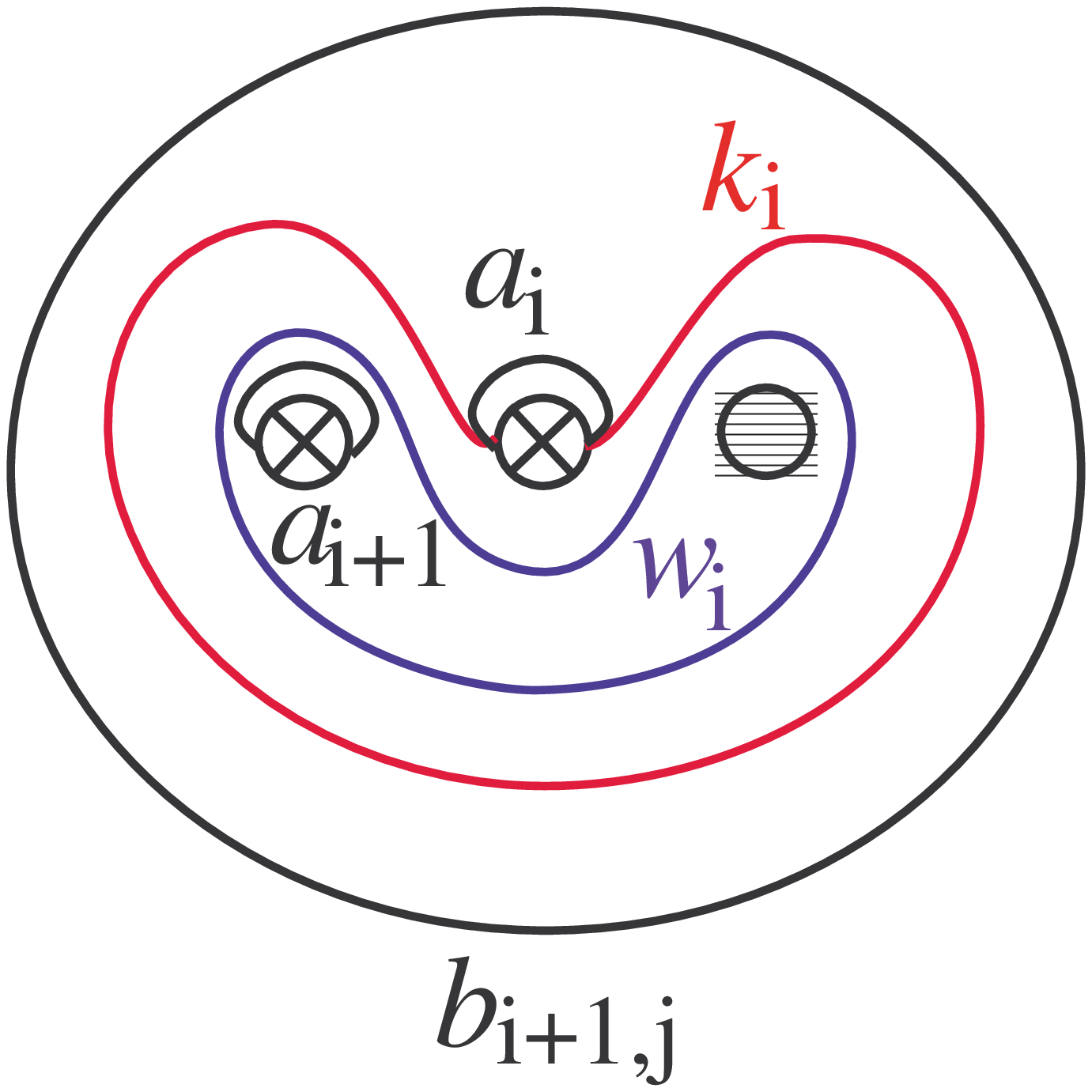} \hspace{0.1cm} 	
		\epsfxsize=1.60in \epsfbox{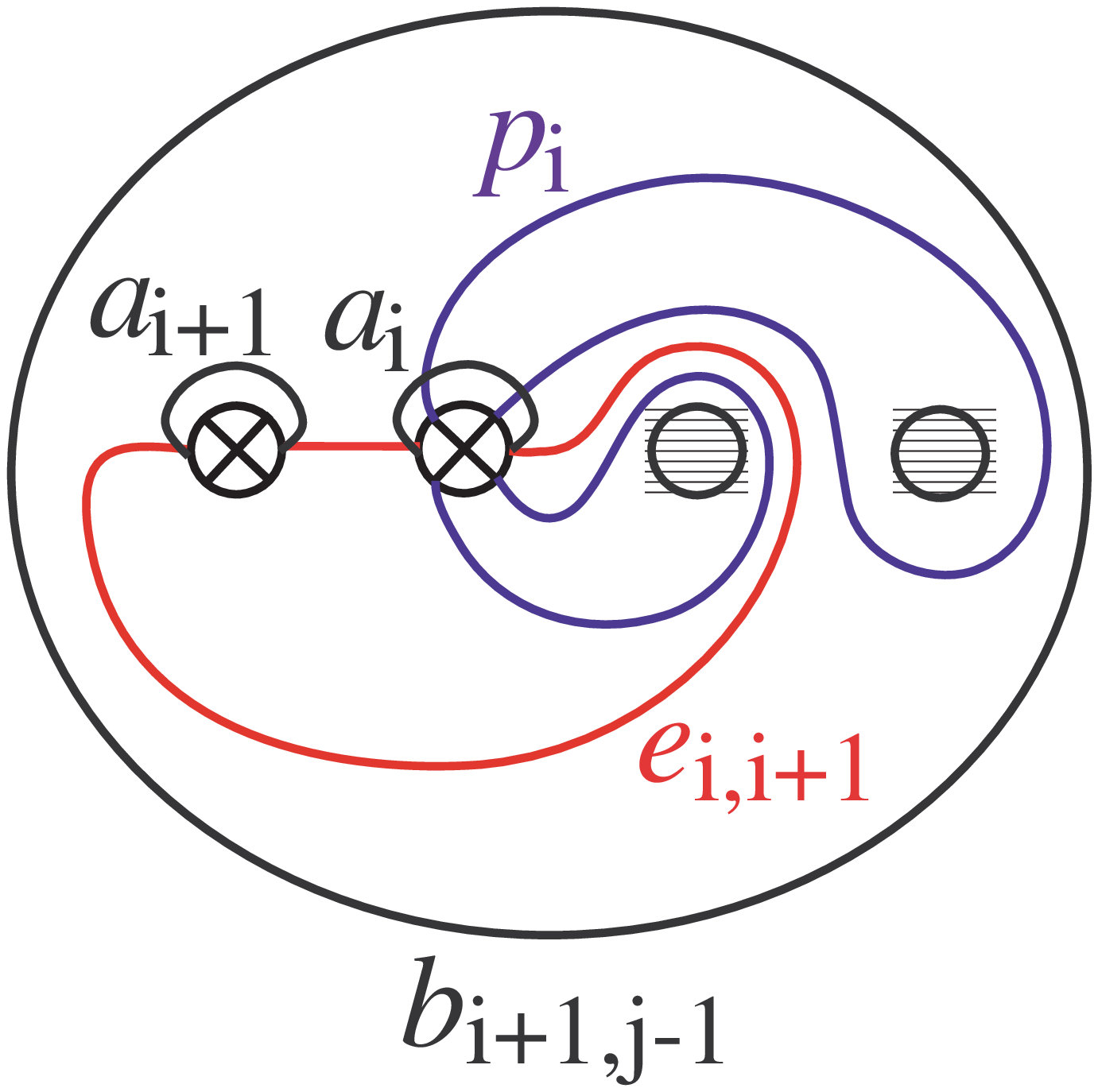}  \hspace{0.1cm} 
		\epsfxsize=1.60in \epsfbox{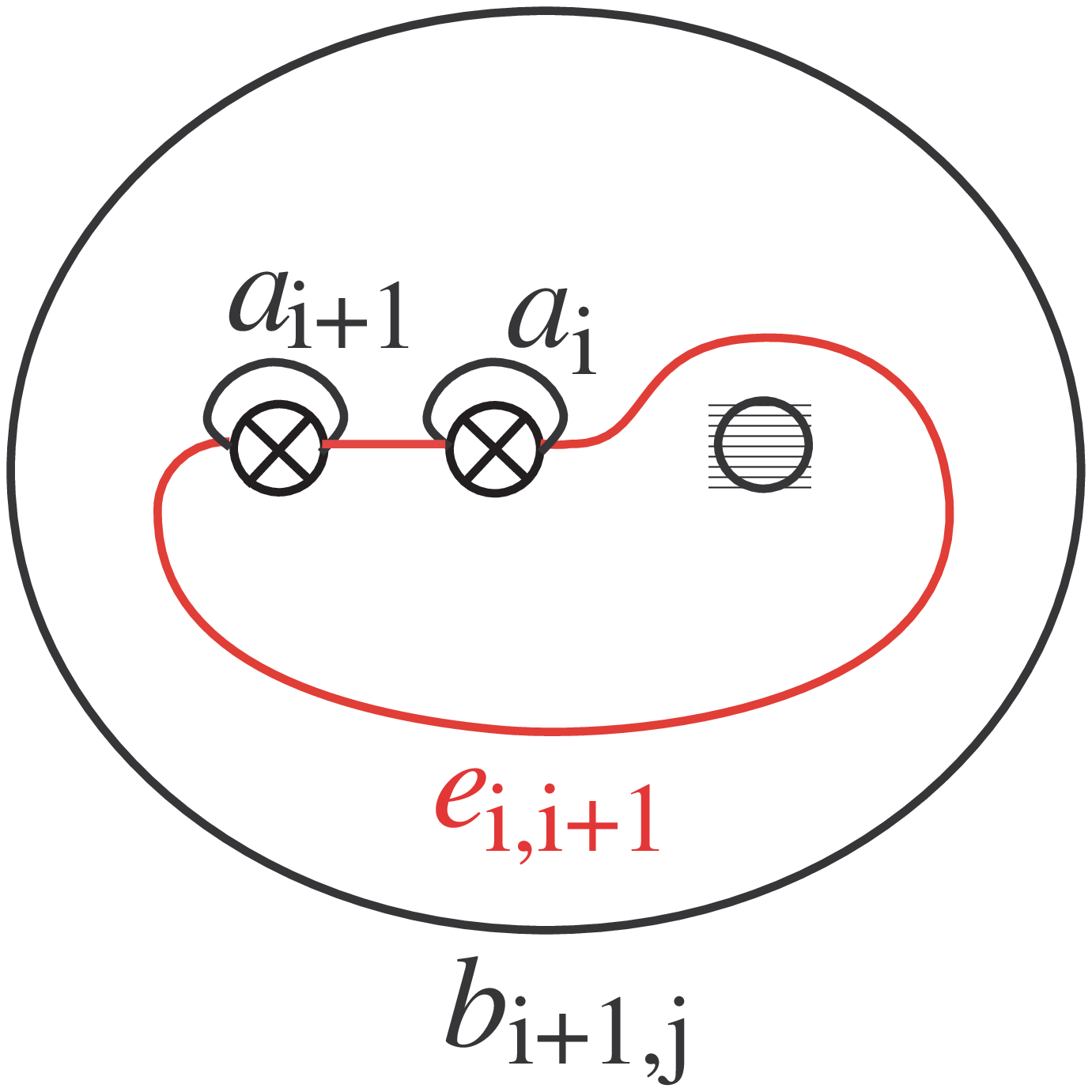} 
		
		(iv)  \hspace{3.6cm}   (v)  \hspace{3.6cm}   (vi) 
		
		\vspace{0.1cm}
		
		\epsfxsize=1.60in \epsfbox{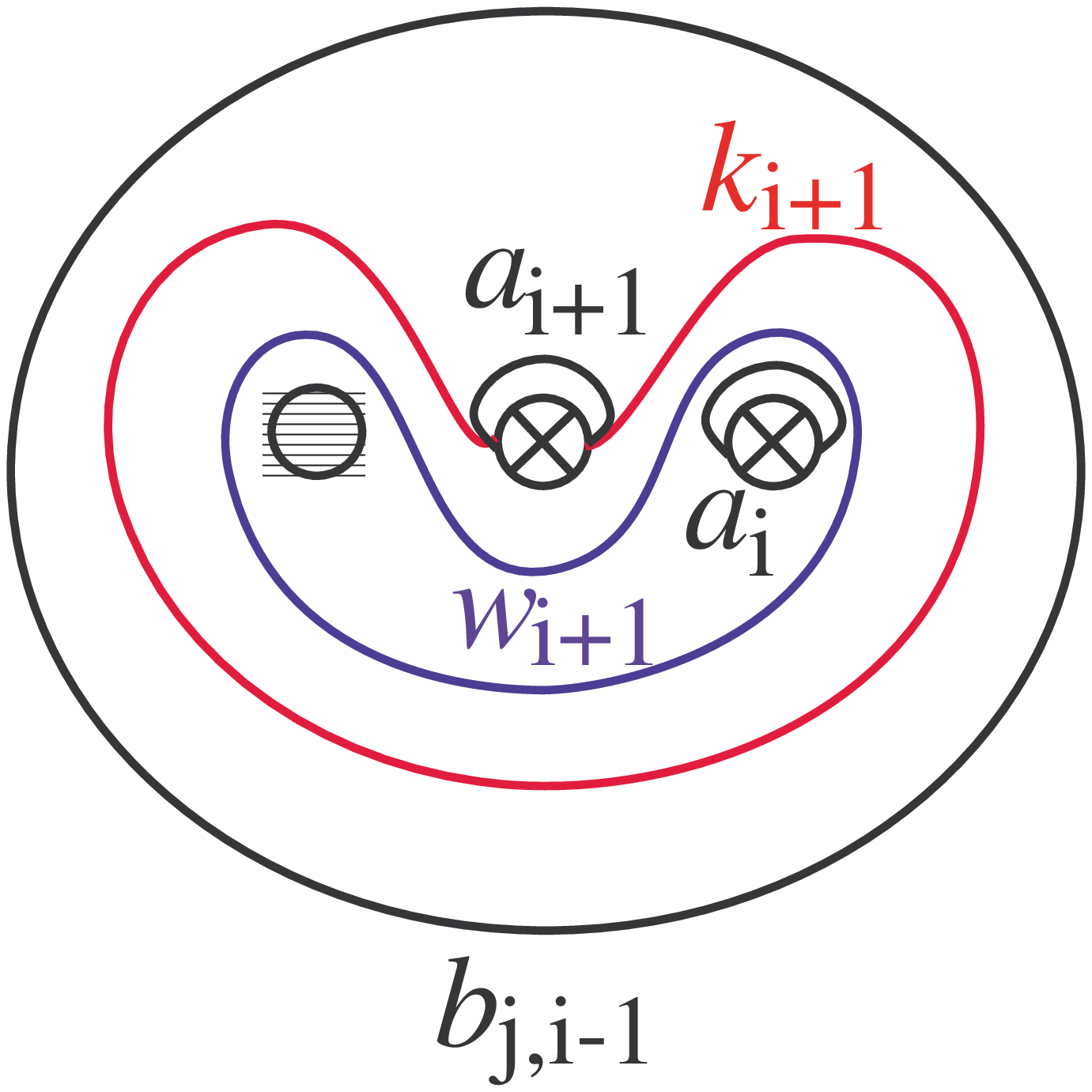} \hspace{0.1cm} 	
		\epsfxsize=1.60in \epsfbox{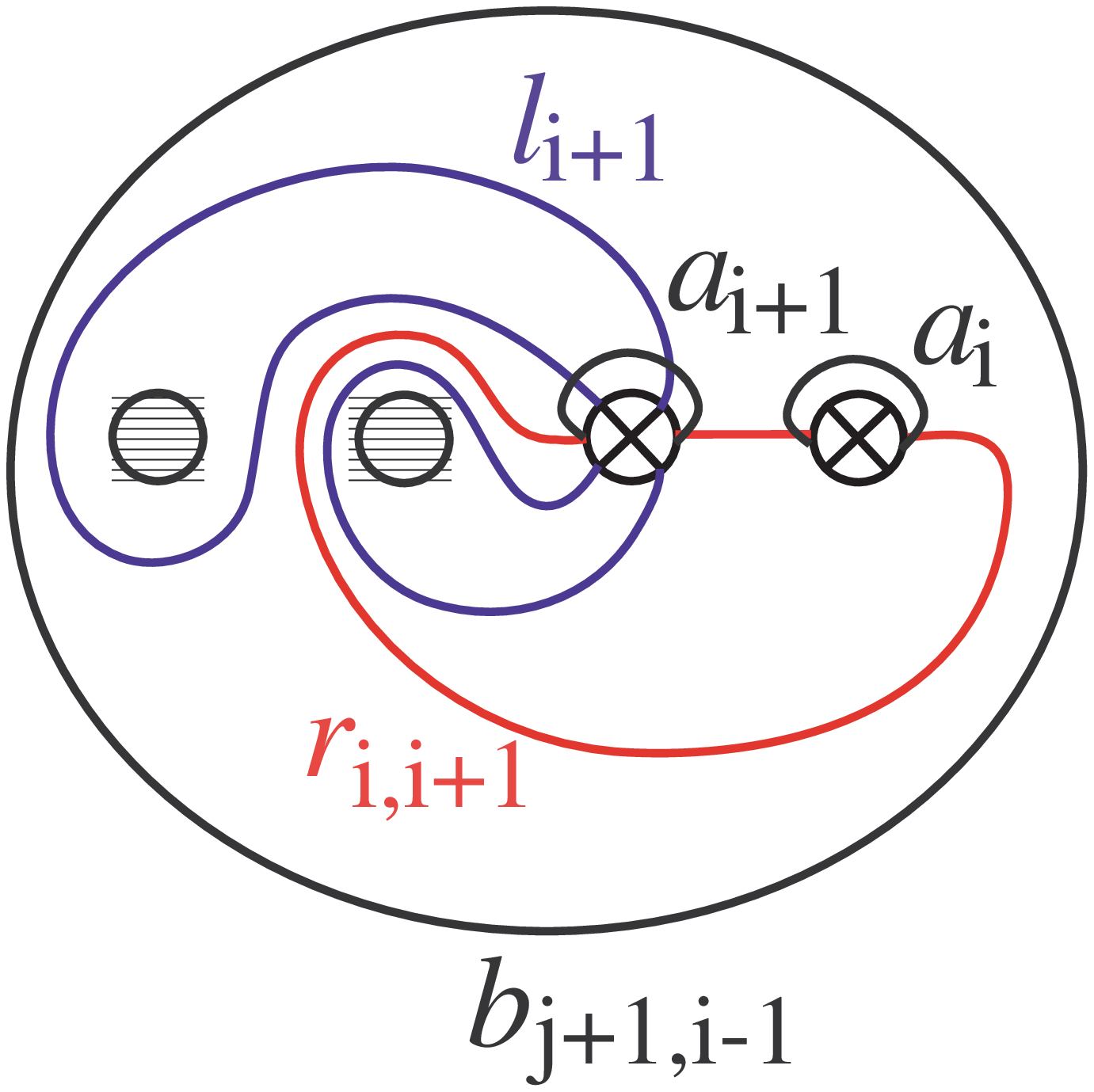}  \hspace{0.1cm} 
		\epsfxsize=1.60in \epsfbox{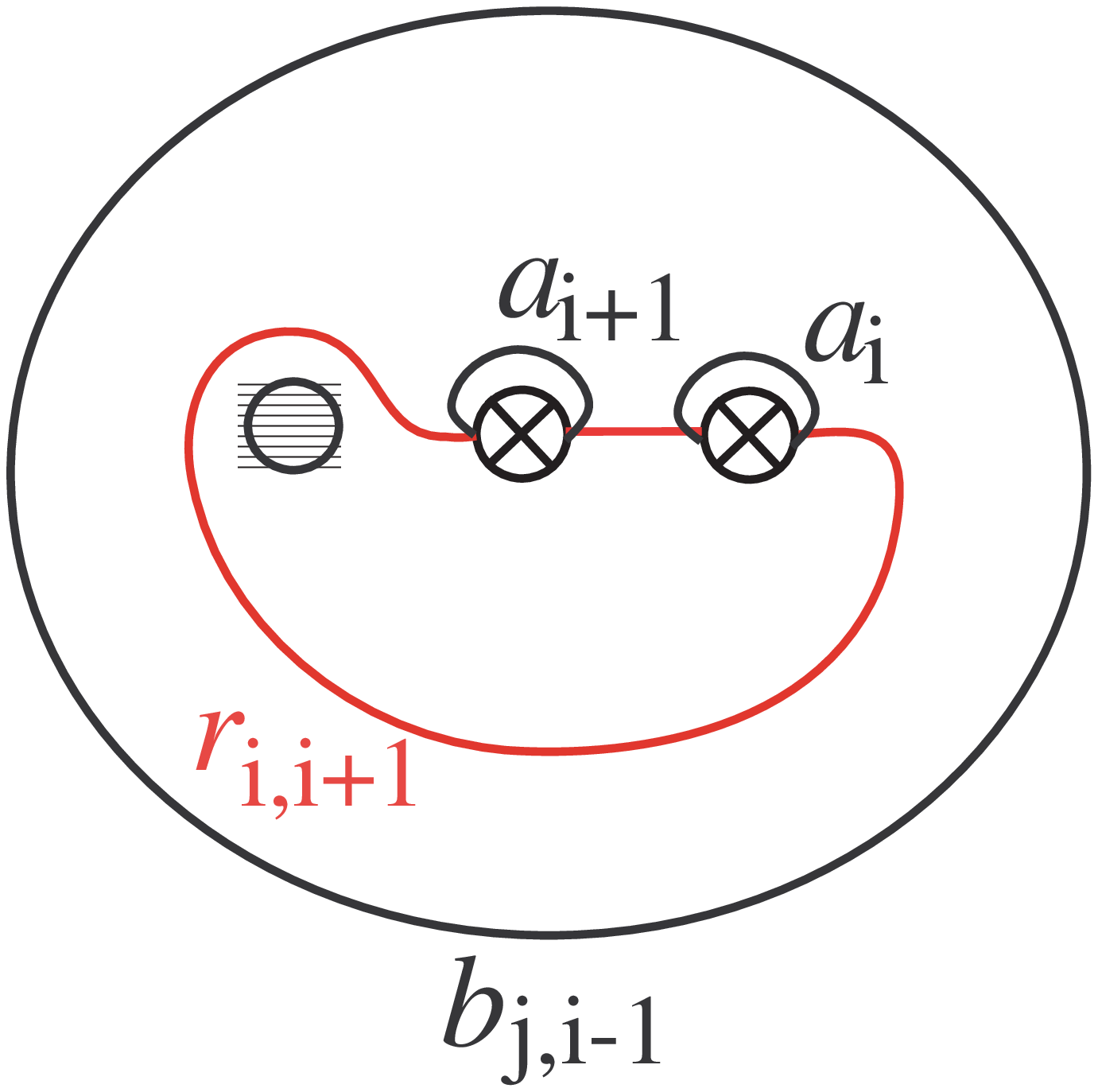} 
		
		(vii)  \hspace{3.6cm}   (viii)  \hspace{3.6cm}   (ix) 	
		
		\vspace{0.1cm}
		
		\epsfxsize=1.60in \epsfbox{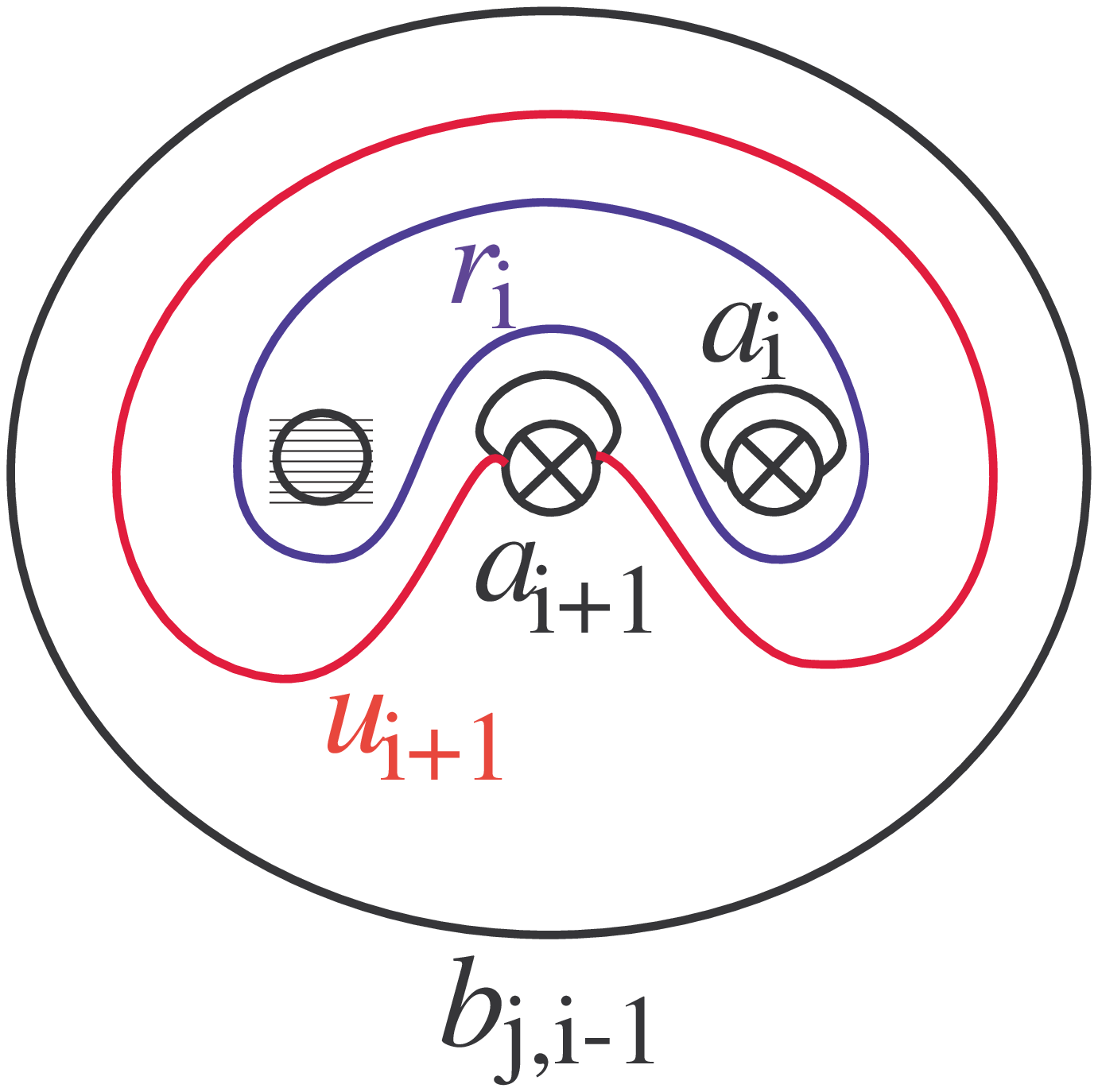} \hspace{0.1cm} 	
		\epsfxsize=1.60in \epsfbox{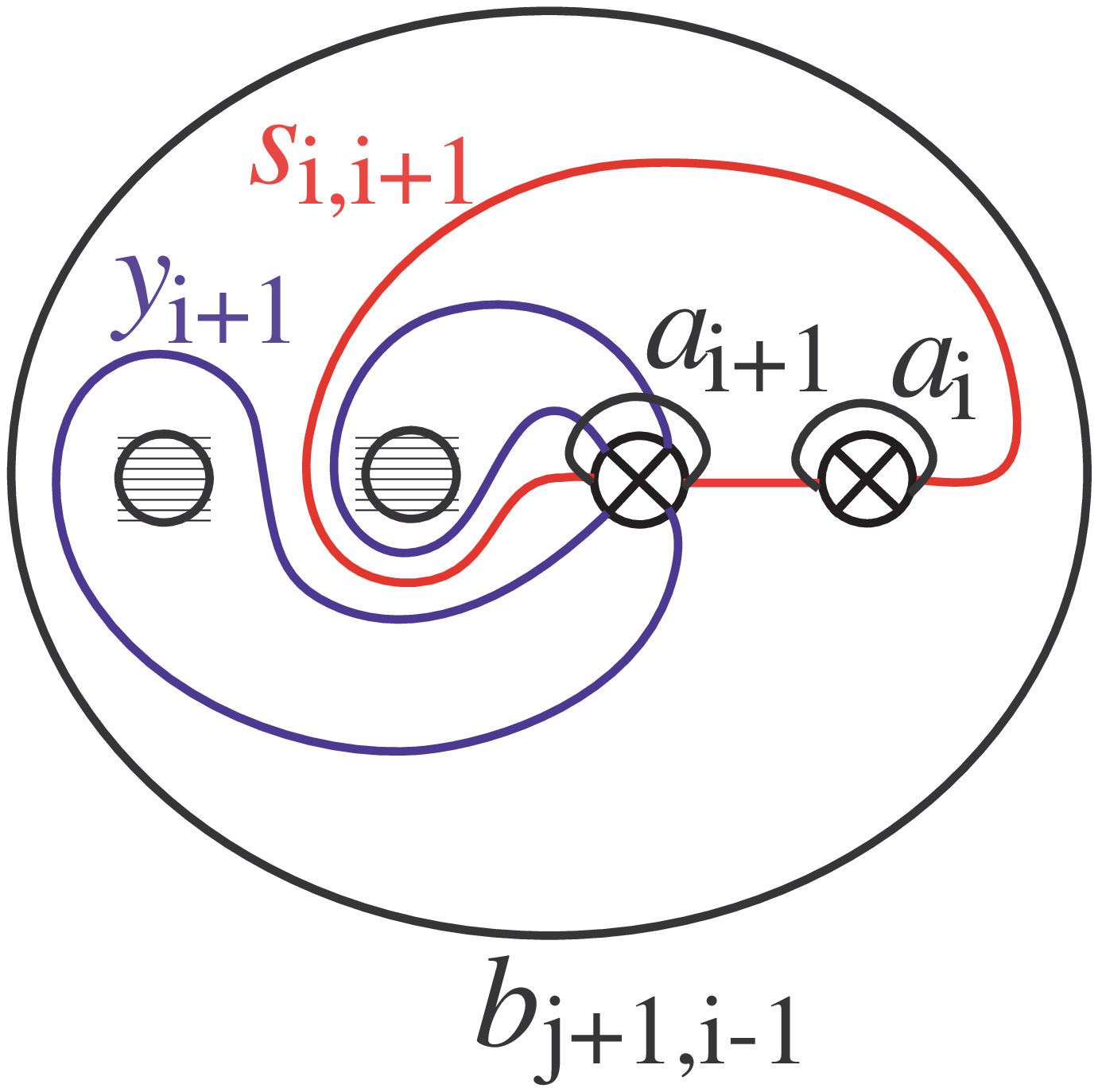}  \hspace{0.1cm} 
		\epsfxsize=1.60in \epsfbox{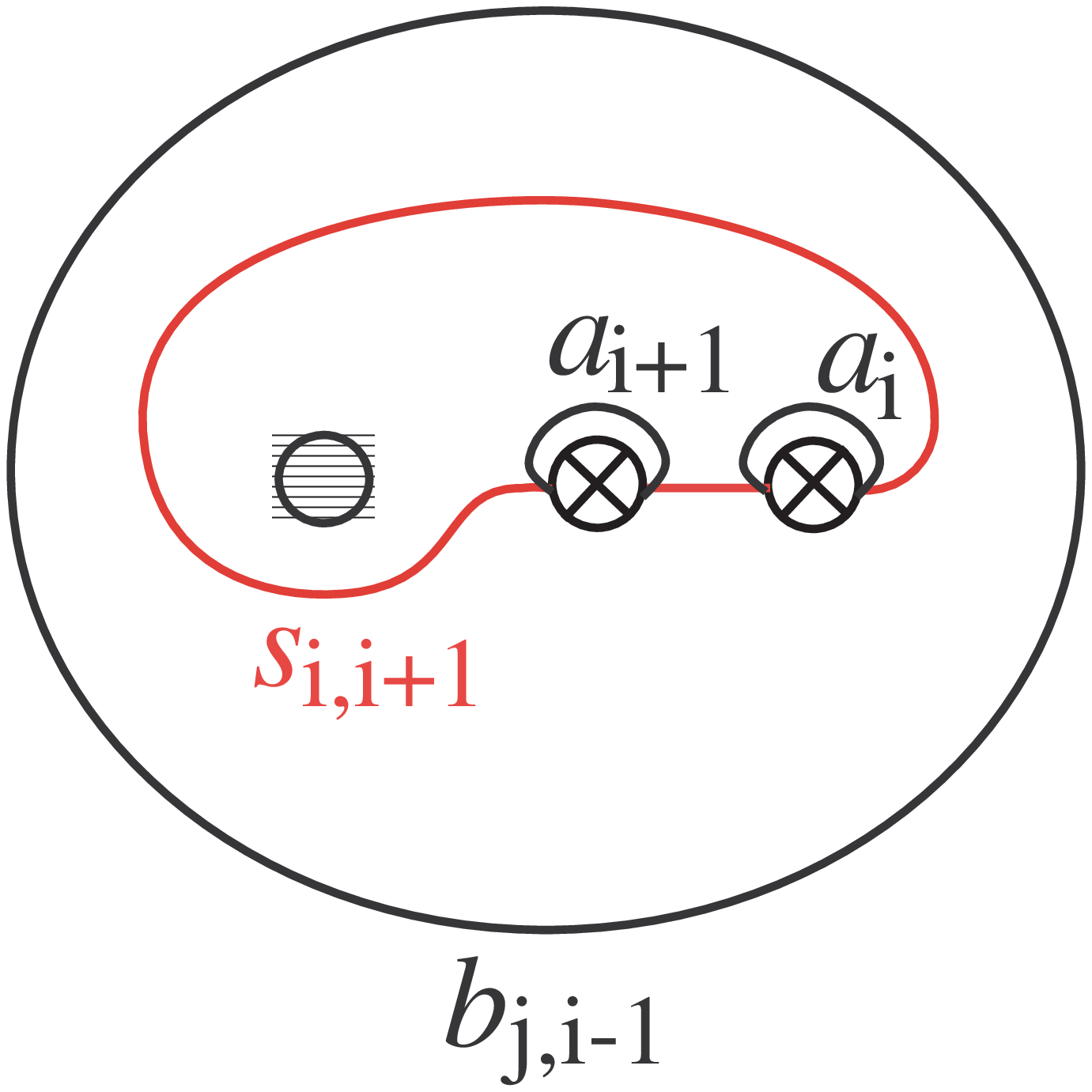} 
		
		(x)  \hspace{3.6cm}   (xi)  \hspace{3.6cm}   (xii) 	
		
		\caption{Curves in $\mathcal{B}_5$}\label{Fig-3-aa}
	\end{center}
\end{figure}
 
Let $\mathcal{B}_5 = \{k_1, k_2, \dots, k_{g-1}, p_1, p_2, \dots, p_{g-1}, e_{1,2}, e_{2,3}, \dots, e_{g-1,g},$ $l_2, l_3,$ $ \dots, l_g, $ $r_{1,2}, r_{2,3}, \dots,$ $r_{g-1,g},$ $y_2, y_3, \dots, y_g, s_{1,2}, s_{2,3}, \dots, s_{g-1,g} \}$ where the curves are as shown in Figure \ref{Fig-3-aa}. 

\begin{lemma} \label{B_5} $\bigcup_{i=1} ^5 \mathcal{B}_i$ is a finite rigid set.\end{lemma}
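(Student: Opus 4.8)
The plan is to run the same induction-style argument used in the proofs of Lemmas \ref{B_3} and \ref{B_4}. I would take a locally injective simplicial map $\lambda : \bigcup_{i=1}^5 \mathcal{B}_i \rightarrow \mathcal{C}(N)$, observe that it restricts to a locally injective simplicial map on $\bigcup_{i=1}^4 \mathcal{B}_i$, and invoke Lemma \ref{B_4} to obtain a homeomorphism $h$ of $N$ with $h([x]) = \lambda([x])$ for every $x \in \bigcup_{i=1}^4 \mathcal{B}_i$. It then suffices to show that $h$ and $\lambda$ still agree on each of the new curves $k_i, p_i, e_{i,i+1}, l_i, r_{i,i+1}, y_i, s_{i,i+1}$ appearing in $\mathcal{B}_5$ (Figure \ref{Fig-3-aa}); once that is done, $h$ realizes $\lambda$ on all of $\bigcup_{i=1}^5 \mathcal{B}_i$ and the set is rigid, and finiteness is clear.

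I would organize the verification into one claim per family of new curves, ordered so that when a family is treated all the reference curves needed to pin it down have already been handled — either they lie in $\bigcup_{i=1}^4 \mathcal{B}_i$ or they are members of an earlier family in $\mathcal{B}_5$. For a representative curve $z$ of a family I would proceed exactly as in Lemma \ref{B_4}: pick a two-sided curve $b$ from $\bigcup_{i=1}^4 \mathcal{B}_i$ so that cutting $N$ along $b$ yields a subsurface $M$ of small complexity containing $z$, and exhibit finitely many curves $z_1,\dots,z_m$ — some inside $M$, together with all curves of the earlier $\mathcal{B}_i$ lying in $N \setminus M$ — such that $z$ is the unique nontrivial simple closed curve up to isotopy that is disjoint from and nonisotopic to each of $z_1,\dots,z_m$. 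Since $h$ and $\lambda$ agree on the isotopy classes of $b$ and of all the $z_j$, local injectivity of $\lambda$ forces $\lambda([z])$ to be disjoint from and nonisotopic to $h([z_1]),\dots,h([z_m])$ inside $h(M)$, whence $\lambda([z]) = h([z])$. The remaining members of each family follow by the analogous configuration, as in the ``similarly'' steps of Lemmas \ref{B_3} and \ref{B_4}, and the cases $n \leq 1$ and small $g$ are covered by the same pictures read through the striped-disk conventions fixed before Lemma \ref{B_3}.

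The main obstacle is exactly the uniqueness clause in each claim, which is why the curves in Figure \ref{Fig-3-aa} must be chosen as they are: one has to check that after cutting along $b$ and deleting the reference curves, the piece of $M$ left over is pinched enough that only one isotopy class of curve survives — for the one-sided curves $k_i, p_i, l_i, y_i$ the complement of the references should be a union of disks, Möbius bands, annuli and pairs of pants leaving a single ``room'' for $z$, while for the two-sided curves $e_{i,i+1}, r_{i,i+1}, s_{i,i+1}$ one needs in addition that the unique candidate is two-sided rather than one-sided. I would also keep track at each step of non-isotopy to boundary components and to cores of Möbius bands, so that every $z$ remains a genuine vertex of $\mathcal{C}(N)$. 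This is a bookkeeping task rather than a conceptual one, but it is where all the work lies; once these uniqueness statements are in hand, the conclusion that $\bigcup_{i=1}^5 \mathcal{B}_i$ is a finite rigid set is immediate.
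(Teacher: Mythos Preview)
Your proposal is correct and follows essentially the same approach as the paper: restrict $\lambda$ to $\bigcup_{i=1}^4 \mathcal{B}_i$, invoke Lemma \ref{B_4} to obtain $h$, and then handle the new families in $\mathcal{B}_5$ one at a time (in the paper the order is $k_i$, then $p_i$, then $e_{i,i+1}$, then the remaining four families by symmetry) by cutting along a suitable $b_{\ast,\ast}$ and exhibiting each new curve as the unique curve disjoint from and nonisotopic to a list of already-controlled curves. The only point to add is that the paper makes the dependencies explicit --- $p_i$ uses $k_i$, and $e_{i,i+1}$ uses both $c_i \in \mathcal{B}_4$ and $p_i$ --- exactly as you anticipated in your ordering remark.
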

 
\begin{proof} We will give the proof when $n \geq 2$. The proofs for the other cases when $n \leq 1$ are similar. Let $\lambda: \bigcup_{i=1} ^5 \mathcal{B}_i \rightarrow \mathcal{C}(N)$ be a locally injective simplicial map. The map $\lambda$ restricts to a locally injective simplicial map on $\bigcup_{i=1} ^4$. By Lemma \ref{B_4}, there exists a homeomorphism $h: N \rightarrow N$ such that $h([x]) = \lambda([x])$ for every $x$ in $\bigcup_{i=1} ^4 \mathcal{B}_i$. 

Claim 1: $h([k_i]) = \lambda([k_i])$ for all $i =1, 2, \dots, g-1$.  

Proof of Claim 1: To see $h([k_1]) = \lambda([k_1])$, we cut $N$ along the curve $b_{2,g+n-1}$. This gives us a nonorientable surface, $M_1$, of genus two with two boundary components, containing $a_1$ and $a_2$. The curve $k_1$ is the unique nontrivial simple closed curve up to isotopy disjoint from and nonisotopic to each of $a_2, b_{2,g+n-1}, w_1$, all the curves of $\mathcal{B}_1 \cup \mathcal{B}_2$ that are in $N \setminus M_1$, and also nonisotopic to $a_1$. 
Since $h$ and $\lambda$ agree on the isotopy class of all these curves, we have $h([k_1]) = \lambda([k_1])$. To see $h([k_2]) = \lambda([k_2])$, we cut $N$ along the curve $b_{3,g+n}$. This gives us a nonorientable surface, $M_2$, of genus three with one boundary component, containing $a_1, a_2$ and $a_3$. The curve $k_2$ is the unique nontrivial simple closed curve up to isotopy disjoint from and nonisotopic to each of $a_1, a_3, b_{3,g+n}, w_2$, all the curves of $\mathcal{B}_1 \cup \mathcal{B}_2$ that are in $N \setminus M_2$ and also nonisotopic to $a_2$.  
Since $h$ and $\lambda$ agree on the isotopy class of all these curves, we have $h([k_2]) = \lambda([k_2])$. Similarly, we have $h([k_i]) = \lambda([k_i])$ for each $i$.  

Claim 2: $h([p_i]) = \lambda([p_i])$ for all $i =1, 2, \dots, g-1$.

Proof of Claim 2: To see $h([p_1]) = \lambda([p_1])$, we cut $N$ along the curve $b_{2,g+n-2}$. This gives us a nonorientable surface, $M_3$, of genus two with three boundary components, containing $a_1$ and $a_2$. The curve $p_1$ is the unique nontrivial simple closed curve up to isotopy disjoint from and nonisotopic to each of $k_1, a_{1,g+n-1}$ and $b_{1,g+n-2}$ and all the curves of $\mathcal{B}_1 \cup \mathcal{B}_2$ that are in $N \setminus M_3$. Since $h$ and $\lambda$ agree on the isotopy class of all these curves, we have $h([p_1]) = \lambda([p_1])$. To see $h([p_2]) = \lambda([p_2])$, we cut $N$ along the curve $b_{2,g+n-1}$. This gives us a nonorientable surface, $K_2$, of genus two with two boundary components, containing $a_{2,g+n}$. The curve $p_2$ is the unique nontrivial simple closed curve up to isotopy disjoint from and nonisotopic to each of $k_2, a_{2,g+n}, a_1, b_{2,g+n-1}$ and all the curves of $\mathcal{B}_1 \cup \mathcal{B}_2$ that are in $N \setminus K_2$. 
Since $h$ and $\lambda$ agree on the isotopy class of all these curves, we have $h([p_2]) = \lambda([p_2])$. To see $h([p_3]) = \lambda([p_3])$, we cut $N$ along the curve $b_{3,g+n}$. This gives us a nonorientable surface, $M_4$, of genus three with one boundary component, containing $a_{3,1}$. 
The curve $p_3$ is the unique curve up to isotopy disjoint from and nonisotopic to each of $k_3, a_{3,1}, a_1, a_2$ and $b_{3,g+n}$ and all the curves of $\mathcal{B}_1 \cup \mathcal{B}_2$ that are in $N \setminus M_4$. Since $h$ and $\lambda$ agree on the isotopy class of all these curves, we have $h([p_3]) = \lambda([p_3])$. Similarly, we have $h([p_i]) = \lambda([p_i])$ for each $i$.

Claim 3: $h([e_{i,i+1}]) = \lambda([e_{i,i+1}])$ for all $i =1, 2, \dots, g-1$.

Proof of Claim 3: To see $h([e_{1,2}]) = \lambda(e_{1,2})$, we cut $N$ along the curve $b_{2,g+n-1}$. This gives us a nonorientable surface, $M_5$, of genus two with two boundary
components containing $c_1$. The curve $e_{1,2}$ is the unique nontrivial simple closed curve up to isotopy disjoint from and nonisotopic to each of $c_1, b_{2,g+n-1}$ and $p_1$ and all the curves of $\mathcal{B}_1 \cup \mathcal{B}_2$ that are in $N \setminus M_5$. Since $h$ and $\lambda$ agree on the isotopy class of all these curves, we have $h([e_{1,2}]) = \lambda([e_{1,2}])$. To see $h([e_{2,3}]) = \lambda([e_{2,3}])$, 
we cut $N$ along the curve $b_{3,g+n}$. This gives us a nonorientable surface, $M_6$, of genus three with one boundary component, which contains $c_2$. The curve $e_{2,3}$ is the unique nontrivial curve up to isotopy disjoint from and nonisotopic to each of $a_1, c_2, b_{3,g+n}$ and $p_2$ and all the curves of $\mathcal{B}_1 \cup \mathcal{B}_2$ that are in $N \setminus M_6$. Since $h$ and $\lambda$ agree on the isotopy class of all these curves, we have $h([e_{2,3}]) = \lambda([e_{2,3}])$. Similarly, we have $h([e_{i,i+1}]) = \lambda([e_{i,i+1}])$ for each $i$.

Claim 4: $h([l_i]) = \lambda([l_i])$, $h([y_i]) = \lambda([y_i])$ 
for all $i =2, 3, \dots, g$, and $h([r_{i,i+1}]) =\lambda([r_{i,i+1}])$, $h([s_{i,i+1}]) = \lambda([s_{i,i+1}])$ for all $i =1, 2, \dots, g-1$. 

Proof of Claim 4: The proofs are very similar the proofs of Claim 2 and Claim 3.

We proved that $h([x]) = \lambda([x])$ for every $x$ in $\bigcup_{i=1} ^5 \mathcal{B}_i$. Hence, $\bigcup_{i=1} ^5 \mathcal{B}_i$ is a finite rigid set.\end{proof}\\

{\bf Remark:} From now on, to prove our main theorem we will follow the proofs given by the author in \cite{Ir10}. We note that our set $\bigcup_{i=1} ^5 \mathcal{B}_i$ has more curves (see for example $x_i, u_{i,o}, v_{i+1,o}$) than the curves considered in Lemma 3.8 in \cite{Ir10}. So, even though there are similarities, the sequence we will construct whose elements are rigid sets in the proof of Theorem 1.1 will be different from the sequence we constructed whose elements were superrigid sets in \cite{Ir10}. 

\begin{lemma} \label{curves} $\bigcup_{i=1} ^5 \mathcal{B}_i$ can be completed to a rigid set $\mathcal{B}$ such that $\mathcal{B}$ has nontrivial curves of every topological type on $N$.\end{lemma}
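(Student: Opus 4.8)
The plan is to adjoin finitely many curves to $\bigcup_{i=1}^{5}\mathcal{B}_i$, one at a time, so that at each stage the enlarged set remains rigid and so that the final set realizes every topological type of nontrivial simple closed curve on $N$. The mechanism for preserving rigidity is the one already used throughout this section: if $\mathcal{B}'$ is rigid and $x$ is the \emph{unique} nontrivial simple closed curve, up to isotopy, that is disjoint from and nonisotopic to each of finitely many curves $z_1,\dots,z_m\in\mathcal{B}'$, and if moreover $x$ and each $z_i$ lie in the star of a common vertex of $\mathcal{B}'\cup\{x\}$, then $\mathcal{B}'\cup\{x\}$ is rigid. Indeed, for a locally injective simplicial map $\lambda$ on $\mathcal{B}'\cup\{x\}$ one gets a homeomorphism $h$ with $h([y])=\lambda([y])$ for $y\in\mathcal{B}'$; then $\lambda([x])$ is disjoint from each $h([z_i])=\lambda([z_i])$, is nonisotopic to each of them by local injectivity, and hence equals $h([x])$ by the uniqueness of $x$ applied to the images $h([z_i])$. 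Iterating this, it suffices to produce the required new curves each with a suitable determining configuration inside the set constructed so far.

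First I would enumerate the topological types. A nontrivial simple closed curve on $N$ is one-sided or two-sided; if one-sided, cutting along it yields either a nonorientable or an orientable surface (the latter only for the appropriate parity of $g$), and if two-sided, its complement is either connected — again orientable or nonorientable — or a disjoint union of two subsurfaces, whose homeomorphism types (orientable versus nonorientable, genus, distribution of the $n$ boundary components) give finitely many further subcases. Running through this finite list and comparing with the curves already present in $\bigcup_{i=1}^{5}\mathcal{B}_i$ (the families $a_i$, $a_{i,j}$, $b_{i,j}$, $w_i$, $r_i$, $u_i$, $v_i$, $x_i$, $c_i$, $d_{i,i+1}$, $u_{i,o}$, $v_{i,o}$, $k_i$, $p_i$, $e_{i,i+1}$, $l_i$, $y_i$, $r_{i,i+1}$, $s_{i,i+1}$), I would record exactly which topological types are still unrepresented; as in the small-genus analyses and in \cite{Ir10}, this is a short list of families, and it depends on whether $n=0$, $n=1$ or $n\ge 2$, so I would organize the argument in these three cases, in parallel with Lemmas \ref{B_3}--\ref{B_5}.

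For each missing type I would then write down a concrete representative $x$ together with its determining configuration. The standard device, already used in the proofs of Lemmas \ref{B_3}, \ref{B_4} and \ref{B_5}, is to cut $N$ along one or several of the curves $b_{i,j}$, obtaining a small subsurface $M$ in which $x$ is visibly the unique nontrivial curve, up to isotopy, disjoint from and nonisotopic to a few curves of $M$ together with all curves of $\bigcup_{i=1}^{5}\mathcal{B}_i$ lying in $N\setminus M$; the common-neighbor requirement is met by including a curve of $\bigcup_{i=1}^{5}\mathcal{B}_i$ disjoint from both $x$ and each $z_i$. Carrying this out for every missing type and adjoining the resulting curves in order produces a finite set $\mathcal{B}\supset\bigcup_{i=1}^{5}\mathcal{B}_i$ that contains curves of every topological type; rigidity of $\mathcal{B}$ then follows by restricting any locally injective $\lambda$ to $\bigcup_{i=1}^{5}\mathcal{B}_i$, invoking Lemma \ref{B_5} for the homeomorphism $h$, and propagating $h([x])=\lambda([x])$ through the adjoined curves in the order they were added.

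The main obstacle is the combinatorial bookkeeping of the two previous steps: one must check that \emph{every} remaining topological type --- in particular the two-sided separating curves whose two complementary pieces are both nonorientable with roughly balanced genus, and, for the appropriate parity of $g$, the one-sided curves with orientable complement --- really does admit a representative pinned down by curves already present while still satisfying the common-neighbor condition needed to upgrade local injectivity to $\lambda([x])\ne\lambda([z_i])$. This is the same style of case analysis carried out in \cite{Ir10}, but it must be redone here: because the building set $\bigcup_{i=1}^{5}\mathcal{B}_i$ is larger (it contains the extra curves $x_i$, $u_{i,o}$, $v_{i+1,o}$, and so on), both the list of still-missing types and the determining configurations available for them are different from those in \cite{Ir10}.
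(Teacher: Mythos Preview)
Your proposal is correct and takes essentially the same approach as the paper, which simply refers to Lemmas~3.9 and~3.10 of \cite{Ir10} and notes that the unique-determination mechanism you spell out carries over verbatim from superinjective to locally injective maps, omitting the case analysis. One minor remark: your common-neighbor requirement is automatically met, since $x$ disjoint from $z_i$ already places $[x]\in St([z_i])$, so local injectivity on that star yields $\lambda([x])\neq\lambda([z_i])$ without any further hypothesis.
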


\begin{proof} By Lemma \ref{B_5}, we know $\bigcup_{i=1} ^5 \mathcal{B}_i$ is a finite rigid set. We can complete $\bigcup_{i=1} ^5 \mathcal{B}_i$ to a rigid set $\mathcal{B}$ such that $\mathcal{B}$ has nontrivial curves of every topological type on $N$ by following the proofs of Lemma 3.9 and Lemma 3.10 given by the author in \cite{Ir10}. The proofs there are given for superinjective simplicial maps but the same arguments work for locally injective simplicial maps. The main idea there is to see that if a curve $y$ is uniquely defined by a set $A$ of curves (meaning that $y$ is the unique curve up to isotopy disjoint from and nonisotopic to each curve in $A$), and we know that $\lambda$ agrees with a homeomorphism $h$ on $[x]$ for each $x \in A$, then $\lambda$ and $h$ agree on $[y]$ as well. This argument works for locally injective simplicial maps too. Since the construction is very similar we will not give it here.\end{proof}\\ 
 
Let $\mathcal{B}$ be as in the Lemma \ref{curves}.
 
\begin{lemma} \label{L_f} There exists a generating set $G$ for $Mod_N$ such that $\forall \ f \in G$, $\exists$ a set $L_f \subset \mathcal{B}$ such that $L_f$ has trivial pointwise stabilizer and $f(L_f) \subset \mathcal{B}$.\end{lemma}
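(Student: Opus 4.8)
The plan is to fix, once and for all, a finite generating set $G$ for $Mod_N$ and then, for each $f \in G$, to exhibit an explicit finite subset $L_f \subset \mathcal{B}$ whose pointwise stabilizer in $Mod_N$ is trivial and which satisfies $f(L_f) \subset \mathcal{B}$. This is the analogue, in the locally injective and trivial‑center regime $g + n \geq 5$, of Lemma \ref{L_f_3} and of Lemma 4.2 in \cite{Ir10}; indeed the only reason we enlarged $\bigcup_{i=1}^5 \mathcal{B}_i$ by the extra families of curves appearing in $\mathcal{B}_5$, and then completed to $\mathcal{B}$ in Lemma \ref{curves}, was to create room for the sets $L_f$. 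Concretely, I would take $G$ to be the finite generating set of Theorem 3.15 in \cite{Ir10} (see also Theorem 4.14 of Korkmaz \cite{K2} and Stukow's generating sets), which consists of Dehn twists $t_{x_1}, \dots, t_{x_k}$ about explicit $2$-sided curves together with one crosscap slide $y$. After applying a homeomorphism of $N$ we may assume each $x_i$, and the curves realizing $y$, lie among the curves drawn in Figures \ref{Fig-29}, \ref{Fig-30}, \ref{Fig-31} and \ref{Fig-3-aa}, hence in $\mathcal{B}$.

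For a Dehn twist generator $f = t_{x_i}$ I would build $L_f$ as follows. Since $t_{x_i}$ fixes the isotopy class of every curve disjoint from $x_i$, any subcollection $V_i \subset \mathcal{B}$ of curves disjoint from $x_i$ is fixed pointwise by $f$ and satisfies $f(V_i) = V_i \subset \mathcal{B}$. The set $V_i$ alone cannot have trivial pointwise stabilizer, because $t_{x_i}$ itself would then lie in it; so I must also include in $L_f$ at least one curve $\gamma_i \in \mathcal{B}$ with $i(\gamma_i, x_i) \neq 0$ whose twisted image $t_{x_i}(\gamma_i)$ again lies in $\mathcal{B}$. Such pairs $\{\gamma_i, t_{x_i}(\gamma_i)\}$ are exactly the pairs formed by the curves $w_i, r_i$ of $\mathcal{B}_2$ and the curves $k_j, p_j, e_{j,j+1}, l_j, y_j, r_{j,j+1}, s_{j,j+1}$ of $\mathcal{B}_5$ together with their partners of type $a_j, a_{i,j}, b_{i,j}, c_j, d_{i,i+1}, u_{j,o}, v_{j,o}$, and one reads off from the figures which pair is carried to which by $t_{x_i}$. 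I would then set $L_f := V_i \cup \{\gamma_i, t_{x_i}(\gamma_i)\}$, choosing $V_i$ large enough that it is a filling and binding family in $\mathcal{B}$, all of whose members are disjoint from $x_i$. For the crosscap slide generator $f = y$, which is supported on a genus‑two nonorientable subsurface with one boundary component, the construction is identical: curves of $\mathcal{B}$ lying outside the support of $y$ are fixed by $y$, and one picks such a filling, binding family $V$ together with a pair $\{\delta, y(\delta)\}$ of curves of $\mathcal{B}$, exactly as in the case "$f = y$, $L_f = V_1$" of Lemma \ref{L_f_3}.

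The main obstacle is verifying that each $L_f$ has trivial pointwise stabilizer in $Mod_N$. Here I would argue in two steps. First, $L_f$ contains the filling and binding collection $V_i$, so any $\phi$ in the pointwise stabilizer of $L_f$ preserves every class of $V_i$ and hence lies in a finite subgroup of $Mod_N$; since $g + n \geq 5$ the center of $Mod_N$ is trivial, so this finite group is trivial once one further curve in generic position is pinned down. Second, $L_f$ also contains $\gamma_i$ (respectively $\delta$), which meets $x_i$ (respectively the support of $y$) and whose position in the figures is chosen so that fixing its isotopy class together with all of $V_i$ forces $\phi = \mathrm{id}$; this is the step that fails for $V_i$ alone and for which the extra curves of $\mathcal{B}_5$ were added. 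Because this verification is a figure chase entirely parallel to the proof of Lemma 4.2 in \cite{Ir10}, differing only in that the curves $x_i, u_{i,o}, v_{i+1,o}, k_i, p_i, \dots$ are now present, I would carry it out schematically, selecting the $V_i$ so that each last step is immediate from the placement of the curves, rather than recording every intersection.
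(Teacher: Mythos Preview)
Your construction of $L_f$ and the accompanying stabilizer argument contain a genuine gap. You take $V_i \subset \mathcal{B}$ to consist of curves \emph{all disjoint from} $x_i$, and then assert that $V_i$ is ``filling and binding'' so that its pointwise stabilizer is finite. These two requirements are incompatible: since every curve in $V_i$ misses $x_i$, the Dehn twist $t_{x_i}$ itself fixes every class in $V_i$, and $t_{x_i}$ has infinite order. Hence the pointwise stabilizer of $V_i$ is infinite and your ``first step'' fails outright. The burden therefore falls entirely on the single extra curve $\gamma_i$, and you would have to prove directly that $V_i \cup \{\gamma_i\}$ has trivial pointwise stabilizer; nothing in your sketch does this. (There is also a smaller slip: you set $L_f = V_i \cup \{\gamma_i, t_{x_i}(\gamma_i)\}$, which forces the additional, unverified condition $t_{x_i}^{2}(\gamma_i) \in \mathcal{B}$; you presumably meant $L_f = V_i \cup \{\gamma_i\}$.)

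The paper does not proceed this way. It simply invokes Lemmas 3.13 and 3.17 of \cite{Ir10} (not Lemma 4.2, which is the $(3,0)$ case), together with the generating sets of Theorems 3.11 and 3.15 of \cite{Ir10} and Theorem 4.14 of \cite{K2}. Guided by the $(3,0)$ model in Lemma \ref{L_f_3}, the sets $L_f$ there are \emph{small explicit configurations}---chains of $2$-sided curves such as $\{c_1,c_2,w\}$---whose pointwise stabilizer is known a priori to be the center of $Mod_N$ (hence trivial for $g+n\ge 5$); one then checks by hand, curve by curve, that $f(L_f)\subset \mathcal{B}$, which is precisely why the extra curves of $\mathcal{B}_5$ were introduced. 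This avoids altogether the need to analyze stabilizers of large ad hoc families, which is where your argument breaks down.
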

 
\begin{proof} To see the generating set $G$, see Theorem 3.11 and Theorem 3.15 given by the author in \cite{Ir10}  and Theorem 4.14 given by Korkmaz in \cite{K2}. Then the proof for this lemma is similar to the proofs of Lemma 3.13 and Lemma 3.17 in \cite{Ir10}. \end{proof}
 
\begin{theorem} If $g+n \geq 5$, then there exists a sequence $\mathcal{E}_1 \subset \mathcal{E}_2 \subset \dots \subset \mathcal{E}_n \subset \dots$  such that 
		
		(i) $\mathcal{E}_i$ is a finite rigid set in $\mathcal{C}(N)$ for all $i \in \mathbb{N}$,
		
		(ii) $\mathcal{E}_i$ has trivial pointwise stabilizer in $Mod_N$ for each $i \in \mathbb{N}$, and 
		
		(iii) $\bigcup_{i \in \mathbb{N}} \mathcal{E}_i = \mathcal{C}(N)$. \end{theorem}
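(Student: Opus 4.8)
The plan is to follow the proof of Theorem~\ref{B-3}, using the rigid set of Lemma~\ref{curves} as the initial block and the generating data of Lemma~\ref{L_f} to propagate rigidity. Fix, by Lemma~\ref{curves}, a finite rigid set $\mathcal{B} \subset \mathcal{C}(N)$ containing a nontrivial curve of every topological type on $N$, and fix, by Lemma~\ref{L_f}, a finite generating set $G$ of $Mod_N$ together with sets $L_f \subset \mathcal{B}$, for $f \in G$, each with trivial pointwise stabilizer and with $f(L_f) \subset \mathcal{B}$. In particular $\mathcal{B}$ itself has trivial pointwise stabilizer, since it contains some $L_f$. Put $\mathcal{E}_1 = \mathcal{B}$ and, for $k \geq 2$,
\[
\mathcal{E}_k = \mathcal{E}_{k-1} \cup \bigcup_{f \in G}\bigl(f(\mathcal{E}_{k-1}) \cup f^{-1}(\mathcal{E}_{k-1})\bigr).
\]
Since $G$ is finite and $\mathcal{E}_1$ is finite, each $\mathcal{E}_k$ is finite; the inclusions $\mathcal{E}_{k-1} \subset \mathcal{E}_k$ are immediate; and $\mathcal{B} \subset \mathcal{E}_k$ gives (ii) for every $k$. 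For (iii): as in the proof of Theorem~\ref{B-3}, for any vertex $x$ of $\mathcal{C}(N)$ there are $r \in Mod_N$ and a vertex $y \in \mathcal{B}$ with $r(y) = x$; expressing $r$ as a word of length $m$ in $G \cup G^{-1}$ and unwinding the recursion shows $x \in \mathcal{E}_{m+1}$, so $\bigcup_{k} \mathcal{E}_k = \mathcal{C}(N)$.

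It remains to establish (i), rigidity of each $\mathcal{E}_k$, by induction on $k$; the base case $k = 1$ is Lemma~\ref{curves}. Suppose $\mathcal{E}_{k-1}$ is rigid and let $\lambda : \mathcal{E}_k \to \mathcal{C}(N)$ be a locally injective simplicial map. Its restriction to $\mathcal{E}_{k-1}$ is locally injective and simplicial, so there is a homeomorphism $h$ of $N$ with $h([x]) = \lambda([x])$ for all $x \in \mathcal{E}_{k-1}$. Fix $f \in G$. Then $f(\mathcal{E}_{k-1})$ is rigid, being the image of a rigid set under a homeomorphism, and $\lambda$ restricts to a locally injective simplicial map on it, so there is a homeomorphism $h_f$ of $N$ with $h_f([x]) = \lambda([x])$ for all $x \in f(\mathcal{E}_{k-1})$. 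Because $L_f \subset \mathcal{B} \subset \mathcal{E}_{k-1}$ and $f(L_f) \subset \mathcal{B} \subset \mathcal{E}_{k-1}$, we have $f(L_f) \subset \mathcal{E}_{k-1} \cap f(\mathcal{E}_{k-1})$, so $h$ and $h_f$ agree on $f(L_f)$; hence $h^{-1} h_f$ fixes $f(L_f)$ pointwise, and since the pointwise stabilizer of $f(L_f)$ is trivial (being conjugate in $Mod_N$ to that of $L_f$), we get $h_f = h$. Thus $h$ agrees with $\lambda$ on $f(\mathcal{E}_{k-1})$; the symmetric argument with $f^{-1}$ (using $L_f \subset \mathcal{E}_{k-1} \cap f^{-1}(\mathcal{E}_{k-1})$) gives the same on $f^{-1}(\mathcal{E}_{k-1})$. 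As $f \in G$ was arbitrary, $h([x]) = \lambda([x])$ for all $x \in \mathcal{E}_k$, and $\mathcal{E}_k$ is rigid.

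The substantive work has already been carried out in the earlier lemmas: Lemmas~\ref{1a}--\ref{B_5}, together with Lemma~\ref{curves}, show that a locally injective simplicial map is forced onto the enlarged configurations (the curves $w_i, r_i, u_i, v_i, x_i$ and those of $\mathcal{B}_4, \mathcal{B}_5$), which is precisely where this construction departs from the superrigid one of \cite{Ir10}: local injectivity does not by itself preserve nonzero geometric intersection, so extra curves are needed to pin down intersection numbers (via pentagons and top-dimensional pants decompositions). Given Lemma~\ref{curves} and Lemma~\ref{L_f}, the only delicate point in the assembly above is that one fixed finite generating set $G$ serves for all $f$ simultaneously with both $L_f$ and $f(L_f)$ inside the fixed rigid set $\mathcal{B}$; this is exactly the content of Lemma~\ref{L_f} (built on the generating sets of \cite{Ir10} and \cite{K2}), after which the remainder is the formal induction above. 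I expect no further obstacle.
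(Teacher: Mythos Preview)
Your proof is correct and takes essentially the same approach as the paper: the paper's own argument simply points to Lemma~\ref{L_f} and says the construction is done by the same induction as in Theorem~\ref{B-3}, which is exactly what you have written out in detail (with the simplification that here the pointwise stabilizer of $L_f$ is trivial rather than the order-two center, so $h_f = h$ on the nose).
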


\begin{proof} By using Lemma \ref{L_f}, we can see that the proof follows as in the proofs of Theorem 3.14 and Theorem 3.18 given by the author in \cite{Ir10}. The construction of the sequence is done by induction and is similar to the one we gave in the proof of Theorem \ref{B-3} in section 2.\end{proof}\\

{\bf Acknowledgements}\\

The author thanks Peter Scott for some discussions.
  
\vspace{0.3cm}

\vspace{0.2cm}
 
University of Michigan, Department of Mathematics, Ann Arbor, 48109, MI, USA, 

e-mail: eirmak@umich.edu\\

\end{document}